\documentclass[11pt]{article}

\usepackage{mathrsfs,bbm,bm,enumitem, diagbox}
\usepackage[margin=1in]{geometry}

\renewcommand{\P}{\mathbb{P}}
\newcommand{\E}{\mathbb{E}}
\newcommand{\Var}{\mathrm{Var}}
\newcommand{\Cov}{\mathrm{Cov}}
\newcommand{\cum}{\mathrm{cum}}

\newcommand{\detm}{\mathrm{det}}

\newcommand{\ind}{\mathbbm{1}}
\newcommand{\les}{\lesssim}
\newcommand{\ef}[1]{F_{#1}^{\e}}
\newcommand{\efla}[2]{F_{#1,#2}^\la}
\newcommand{\tr}{\mathrm{tr}}

\newcommand{\norm}[1]{\left\|#1\right\|}

\usepackage{amsmath,graphicx, amsthm, amssymb, epsfig}
\usepackage{xcolor}
\usepackage[normalem]{ulem}
\numberwithin{equation}{section}

\usepackage{hyperref}
\hypersetup{
    colorlinks=true,
    linkcolor=blue,
    filecolor=magenta,      
    urlcolor=cyan,
    citecolor=teal,
    pdfpagemode=FullScreen,
}

\newcommand{\e}{\epsilon}

\newcommand{\de}{\delta}
\newcommand{\br}{\mathbb{R}}

\newcommand{\pa}{\partial}
\newcommand{\bt}{\beta}
\newcommand{\al}{\alpha}
\newcommand{\la}{\lambda}

\newcommand{\be}{\begin{equation}}
\newcommand{\ee}{\end{equation}}
\def\bs#1\es{
    \begin{equation}\begin{split}
    #1
    \end{split}\end{equation}
}
\def\bsn#1\esn{
    \begin{equation*}\begin{split}
    #1
    \end{split}\end{equation*}
}

\newcommand{\dd}{\mathrm{d}}
\newcommand{\op}{\mathrm{op}}

\newcommand{\bma}{\begin{pmatrix}}
\newcommand{\ema}{\end{pmatrix}}

\newcommand{\us}{\mathcal U}

\newcommand{\CF}{\mathcal F}

\newcommand{\argc}[1]{c_{#1}}

\newcommand{\CH}{\mathcal{H}}

\newcommand{\CO}{\mathcal{O}}

\newtheorem{theorem}{Theorem}[section]
\newtheorem{lemma}[theorem]{Lemma}
\newtheorem{corollary}[theorem]{Corollary}

\theoremstyle{definition}
\newtheorem{assumption}[theorem]{Assumption}
\newtheorem{definition}[theorem]{Definition}
\newtheorem{remark}[theorem]{Remark}

\begin{document}

\title{High-dimensional Laplace asymptotics up to the concentration threshold}
\author{Alexander Katsevich\\
School of Data, Mathematical, and Statistical Sciences,\\ University of Central Florida\\ \texttt{alexander.katsevich@ucf.edu}
\and 
Anya Katsevich\\
Department of Statistical Science, Duke University\\ 
\texttt{anya.katsevich@duke.edu}
}

\date{}
\maketitle

\begin{abstract}
We study high-dimensional Laplace-type integrals of the form
$$
I(\la):=\left(\frac{\la}{2\pi}\right)^{d/2}\int_{\br^d} g(x)e^{-\la f(x)}\dd x,
$$
in the regime where $d$ and $\la$ are both large. Until now, rigorous bounds for the Laplace expansion in growing dimension have been restricted to the ``Gaussian-approximation'' regime, known to hold when $d^2/\la\to0$. This excludes many practically relevant regimes, including those arising in physics and modern high-dimensional statistics, which operate beyond this threshold while still satisfying the concentration condition $d/\la\to0$. Here, we close this gap. We develop an explicit asymptotic expansion for $\log I(\la)$ with quantitative remainder bounds that remain valid throughout this intermediate region, arbitrarily close to the concentration threshold $d/\la\to0$.

Fix any $L\ge1$ and suppose $g(0)=1$. Assume that, in a neighborhood of the minimizer of $f$, the operator norms of the derivatives of $f$ and $g$ are bounded independently of $d$ and $\la$ through orders $2(L+1)$ and $2L$, respectively. Assuming also some mild global growth conditions on $f$ and $g$, we prove that
\be\label{I-abstract}
\log I(\la)=\sum_{k=1}^{L-1} b_k(f,g)\la^{-k}+\CO(d^{L+1}/\la^L),\qquad d^{L+1}/\la^L\to0,
\ee
and that the coefficients satisfy $b_k(f,g)=\CO(d^{k+1})$. Moreover, the coefficients $b_k(f,g)$ coincide with those arising from the formal cumulant-based expansion of $\log I(\la)$.

In addition, we study the problem of computing expectations against, and sampling from, concentrating Laplace-type probability densities $\pi(x)\propto e^{-\la f(x)}$. For computing expectations of smooth observables $g$, we propose an approximation based on~\eqref{I-abstract}. For sampling, we construct a family of push-forward densities $\hat\pi_L:=(x_L)_\# \mathcal N(0,\la^{-1}I_d)$, $L=1,2,3\dots$ approximating $\pi$ with accuracy $\operatorname{TV}(\pi,\hat\pi_L)\lesssim d^{L+1}/\la^L$. Here, the maps $x_L$ are explicit polynomials. By taking $L$ large enough, here too, we can take $d$ arbitrarily close to the concentration threshold $d=o(\la)$. 


\end{abstract}

\section{Introduction}

Laplace-type integrals of the form
\be\label{lap}
I(\la):=\left(\frac{\la}{2\pi}\right)^{d/2}\int_{\mathbb{R}^d} g(x)e^{-\la f(x)}\dd x
\ee
are a fundamental object in asymptotic analysis and a ubiquitous tool for deriving tractable approximations to otherwise intractable quantities, such as normalizing constants and expectations. When $\la$ is large, these integrals are dominated by neighborhoods of minimizers of $f$, and Laplace asymptotics yields explicit leading-order formulas and systematic higher-order corrections. Such approximations underpin both rigorous analysis (e.g. sharp tail probabilities and free-energy expansions) and practical computation in settings where direct numerical integration is infeasible.

In the classical regime where the dimension $d$ is fixed and $\la\to\infty$, Laplace’s method and its higher-order refinements are well developed \cite[Section 8.3]{bh}, \cite[Section 5, Chapter IX]{wong}. At the opposite extreme, there is also an infinite-dimensional theory (e.g. on Wiener space) in which $\la^{-1}$ plays the role of a small-noise parameter~\cite{arous1988}. What remains comparatively less understood is the intermediate regime in which the dimension grows with the large parameter. This ``growing-$d$'' regime is now very common in modern statistics, where Laplace-type integrals (or ratios thereof) appear as marginal likelihoods and posterior expectations in models whose parameter dimension increases with sample size. See Section~\ref{ssec:statistics_motivation} for more details. Likewise, the absence of rigorously justified ``growing-$d$'' expansions created a significant void in areas such as statistical physics, Euclidean quantum field theories (QFT), and chemistry, where formal expansions have been in use for a long time without adequate justification, see Section~\ref{ssec:physics_motivation}.

Recent work by the second author establishes a high-dimensional Laplace expansion (LE) of $I(\la)$, with explicit remainder control, under the scaling $d^2/\la\to 0$~\cite{katsevich2025a}. The proof centers around an approximation of $f$ in the exponent by its second-order Taylor expansion. Thus the expansion of $I(\la)$ is closely tied to the problem of approximating concentrating densities, of the form $\pi\propto e^{-\la f}$, by Gaussian distributions. In fact, the condition $d^2\ll\la$ arises in numerous contexts as a threshold for Gaussian approximation accuracy. In the context of Gaussian approximation to posteriors in Bayesian inference, $d^2\ll\la$ appears in the works~\cite{katsevich2024b, katsevich2024c, kasprzak2025, katsevich2025unified}. The condition $d^2\ll\la$ also arises in proofs of high-dimensional Central Limit Theorems (CLTs); see~\cite{portnoy1986, zhilova2022, katsevich2025b}. In the CLT, $\la$ (usually denoted $n$) is the given number of i.i.d. $d$-dimensional random vectors, whose average is of interest. In fact, the aforementioned works~\cite{katsevich2025a, katsevich2024b,portnoy1986} establish lower bounds as well, proving $d^2/\la\ll1$ is necessary for the Gaussian approximations to be accurate. Although some works have in fact proved the CLT under much weaker conditions on $d$ relative to $\la$, the Gaussian approximation holds in a much weaker sense. See Section 3.1 and Remark 2 of~\cite{chernozhukov2023high} for an overview of this line of work.

Thus to summarize, the regime $d^2/\la\to0$ arises as a critical threshold in numerous results on approximating $d$-dimensional densities or integrals involving $e^{-\la f}$. Another critical threshold is the regime $d/\la\to0$, which is generically required for distributions $\propto e^{-\la f}$ to concentrate near the minimizer of $f$; see e.g.~\cite[Corollary 2.1]{ghosal2000} and~\cite[Section 4.1]{spok23}. Clearly, if there is no concentration around the minimizer of $f$, one should not expect an expansion to hold whose terms are determined solely by the derivatives of $g$ and $f$ at this single point.

This leaves an intriguing intermediate region where $d^2/\la$ does not vanish or even diverges to infinity, while $d/\la$ still converges to zero. Due to concentration about the minimizer of $f$, one expects that $I(\la)$ in~\eqref{lap} can still be characterized by derivatives of $f$ and $g$ at the minimizer. But it is unclear how to obtain a closed form approximation to the integral without leaning on the tractable Gaussian integral obtained by replacing $f$ with its second-order Taylor expansion. To our knowledge, there has been no rigorously justified explicit Laplace-type approximation of $I(\la)$ in this intermediate regime.


\emph{In this paper we have achieved an important milestone by completely characterizing this largely unexplored intermediate regime under natural local regularity and global growth conditions.} We derive an explicit asymptotic series approximating \eqref{lap} such that for each expansion order $L\geq1$, the remainder is negligible as long as $d^{L+1}/\la^L\to0$. 

The series approximates the \emph{logarithm} of the integral $I(\la)$, and as we explain below, this is precisely what allows us to push $d$ above the $\sqrt\la$ barrier.
Let $x_\star$ be the global minimizer of $f$ and assume without loss of generality that $\nabla^2f(x_\star)=I_d$. Assuming bounded operator norms of derivatives of $f$ and $g$ near $x_\star$ through orders $2L+2$ and $2L$, respectively, and mild global growth conditions on $f$ and $g$, we show that
\bs\label{main res intro}
\log I(\la)=\sum_{k=1}^{L-1} b_k(f,g)\la^{-k}+\CO(d^{L+1}/\la^L)
\es
for some coefficients $b_k(f,g)$ that satisfy $b_k(f,g)=\CO(d^{k+1})$. 
If $d\les \log\la$, then additional factors of $\log\la$ are present in the remainder.
The sum with respect to $k$ is omitted if $L=1$, and we assumed without loss of generality that $g=1$ at $x_\star$. The coefficients $b_k$ are already well-known, and can be derived from formal cumulant expansions~\cite[Section 3.10]{mccullagh2018}.

In fact, more broadly, our contribution is a powerful technique to tackle Laplace integrals. We use this technique not only to prove~\eqref{main res intro}, but also to solve a related problem of approximating a Laplace-type \emph{probability} density, of the form $\pi(x)\propto e^{-\la f(x)}$. Namely, we construct a transformation $x_L:\br^d\to\br^d$ such that $\hat\pi_L:=(x_L)_{\#}\mathcal N(0,\la^{-1}I_d)$ approximates $\pi$:
\be\label{TV-intro}
\mathrm{TV}(\pi,\hat\pi_L )\les d^{L+1}/\la^L.
\ee Here, $\mathcal N(0,\la^{-1}I_d)$ is the Gaussian distribution with  zero mean and covariance matrix $\la^{-1}I_d$, while TV stands for the total variation distance. The subscript $\#$ in the definition of $\hat\pi_L$ denotes the push-forward. This means $\hat\pi_L$ is the law of the random variable $x_L(Z)$ when $Z$ is distributed as $\mathcal N(0,\la^{-1}I_d)$. The reason $\hat\pi_L$ is so useful is that it is easy to sample from, precisely due to this push-forward construction. Cheaply generating approximate samples from an untractable density $\pi\propto e^{-\la f}$ is an important problem in Bayesian statistics. See Section~\ref{ssec:statistics_motivation} for more details.

\paragraph{Significance.}
A central significance of our result is that it advances the modern asymptotic analysis program of developing expansions of ubiquitous Laplace type integrals that remain accurate in increasingly high-dimensional regimes. The best known results to date work under the assumption $d^2/\la\to0$. In contrast, we extend the range of dimensions for which one can make precise asymptotic statements with explicit formulas until the very limit, because beyond $d/\la\to0$ there is no concentration any longer. \emph{In this sense, our work essentially completes the classical Laplace program for concentrating finite-dimensional integrals under natural smoothness and growth assumptions, by proving remainder bounds on the high-dimensional LE up to the concentration threshold.} We show that a constructive analytic approximation remains valid whenever $d^{L+1}/\la^L \to 0$ for any $L\ge 1$, even in the genuinely intermediate region where $d^2/\la \not\to 0$ (indeed, where $d^2/\la$ may diverge), by identifying an explicit exponential correction at the level of the log-integral. 

The implications of our results across many areas of science and engineering are numerous. Two particularly noteworthy applications deserve special mention. 

The first application is in physics, including statistical physics and QFT, where Laplace-type integrals encode quantities of central importance. In the high-dimensional, many-degrees-of-freedom regimes relevant to these fields, such quantities are often evaluated via formal LEs, typically without appropriate remainder bounds; see Section~\ref{ssec:physics_motivation} for details. This theoretical gap, which dates back at least to the Darwin--Fowler steepest-descent approach in 1922~\cite{darwin1922}, has long limited the rigor of many computations. \emph{Our results fill the century-old gap} by placing these Laplace calculations on firm mathematical footing for a broad class of finite-dimensional large-system models.

Another area where our results are of significant importance is statistics. Here, $\la$ plays the role of sample size, and high-dimensional Laplace-type integrals arise ubiquitously as normalizing constants, marginal likelihoods, and posterior expectations. As mentioned above, a central task is to approximate the posterior density $\pi \propto e^{-\la f}$: one wants to sample from it, compute expectations of observables against it, and evaluate its normalizing constant for model comparison. \emph{Our results address all three.} We construct an explicit approximation $\hat{\pi}_L$ to $\pi$ from which one can easily sample, we provide closed-form approximations to posterior expectations of smooth observables that avoid Monte Carlo error entirely, and we give rigorous asymptotic expansions of the normalizing constant that generalize the Bayesian Information Criterion (BIC) to higher order. Laplace-based surrogates such as the BIC are popular precisely because they provide explicit analytic approximations rather than black-box numerical estimates. Yet until now, theoretical backing for these approximations was limited to the $d^2/\la \to 0$ regime. Our results push these guarantees into 
substantially higher-dimensional regimes, arbitrarily close to the concentration threshold $d/\la\to0$. See Section~\ref{ssec:statistics_motivation} for more details and references.


Besides physics and statistics, high-dimensional Laplace-type integrals arise throughout science and engineering, including in molecular simulation and theoretical chemistry (e.g., partition functions and free-energy calculations) \cite{tuckerman2000,markland2018}, Bayesian inverse problems \cite{helin2022non}, and others.

Finally, our result has interesting connections to the theory of cumulants and to normalizing flows in machine learning.
The connection with the former is that we have solved the problem of bounding the remainder in cumulant expansions. Obtaining a high-dimensional remainder bound using cumulant theory (and the closely related theory of Gaussian chaos~\cite{janson1997gaussian, peccati2011, nourdin2012normal}) directly is deeply nontrivial and has not been done before. We have avoided this problem by finding an alternative route. See Section~\ref{sec:cumulant-intro} for more details.

Regarding normalizing flows, these are sequences of maps $T_1, \dots, T_L:\br^d\to\br^d$ with the property that if $Z\sim\mathcal N(0, I_d)$ then $T_1\circ\dots\circ T_L(Z)$ is approximately distributed according to a target distribution $\pi$~\cite{kobyzev2021nfreview,rezende2015normalizingflows}. Typically, $T_1,\dots,T_L$ are constructed using deep neural networks, the parameters of which are found by minimizing a loss~\cite{dinh2017realnvp}. Here, we approximate $\pi$ in a similar fashion, pushing forward a Gaussian distribution through a series of transformations. But the key differences with normalizing flows are that 1) our construction is explicit, not through minimizing some loss, and 2) we prove a rigorous error bound. Essentially the only structural assumption we need to obtain this result is that the target $\pi$ is a concentrating measure. 

\subsection{Application in physics}
\label{ssec:physics_motivation}

In a wide range of models in statistical physics and Euclidean QFT the \emph{partition function}, which is defined as
\be\label{eq:Z_field}
Z=\int \exp\bigl(-\beta\,\mathcal{H}(\phi)\bigr)\,\mathcal{D}\phi,
\ee
plays a central role. Here $\bt$ is inversely proportional to the temperature, $\CH$ is the Hamiltonian of the model, and $\int\dots\mathcal{D}\phi$ denotes functional (infinite-dimensional) integration over all allowed configurations $\phi$ of the model \cite[Sections 2.1]{brezin2010}, \cite[Sections 2.1]{Kardar2013}, \cite[Introduction to Chapter 2]{zinn-justin2021}. The normalized logarithm of the partition function is known as free energy: $\CF=-\beta^{-1}\log Z$. 
The functional integral in~\eqref{eq:Z_field} is typically interpreted as the limit of the appropriate finite dimensional integrals \cite[Sections 2.3]{Kardar2013}. After discretization one obtains an ordinary (finite-dimensional) integral over $d$ degrees of freedom,
\be\label{eq:Z_Laplace}
Z_{d}(\la)=
\int_{\mathbb{R}^{d}} \exp\!\bigl(-\la f(x)\bigr)\,dx,
\quad \mathcal{F}_{d}(\la)\;=\; -\la^{-1}\log Z_{d}(\la),
\ee
where $\la$ is a large prefactor coming from the problem \cite[Sections 2.3]{Kardar2013}. If the model is discrete to begin with (i.e., it has finitely many degrees of freedom $d$), one gets \eqref{eq:Z_Laplace} right away \cite[Sections 2.1 and 2.2]{Kardar2013} and \cite[Sections 1.1--1.3]{brezin2010}.


A basic approximation strategy, known as mean-field approximation, is to replace the integral in \eqref{eq:Z_Laplace} by its leading Laplace contribution near a
global minimizer $x_\star$ of $f$:
\be\label{eq:mf}
\log Z_{d}(\la) \approx
-\la f(x_\star)-\tfrac12\log\det\bigl(\nabla^2 f(x_\star)\bigr)-\tfrac d2 \log (\la/(2\pi)).
\ee
In physics texts, this step is typically justified by a combination of (i) the presence of a large parameter
(e.g.\ volume), and (ii) an \emph{a posteriori} self-consistency check that the discarded terms are “small”
in the regime of interest; see again \cite[Sec.~2.3]{Kardar2013} for an example of this approach.

To go beyond~\eqref{eq:mf}, one expands $f$ around $x_\star$,
\be\label{eq:Taylor}
f(x_\star + u) = f(x_\star)+\tfrac12\,u^\top H u +\sum_{k\ge 3}\tfrac1{k!}\,T_k[u^{\otimes k}],
\quad H=\nabla^2 f(x_\star),
\ee
rescales $u=\la^{-1/2}y$, and rewrites~\eqref{eq:Z_Laplace} as a Gaussian expectation:
\bs\label{eq:Gaussian_rewrite}
Z_{d}(\la)&=e^{-\la f(x_\star)}\big(\tfrac{\la}{2\pi}\big)^{-d/2}(\det H)^{-1/2}\;
\mathbb{E}\bigl[\exp\bigl(-V_\la(Y)\bigr)\bigr],\\
V_\la(Y)&:= \sum_{k\ge 3}\frac{\la^{1-k/2}}{k!}\,T_k[Y^{\otimes k}],
\quad Y\sim \mathcal{N}(0,H^{-1}).
\es
Taking logs yields a \emph{formal cumulant expansion} \cite[Section 3.10]{mccullagh2018}, \cite[Sections 3.1--3.6]{etingof2024}:
\be\label{eq:cumulant}
\log \mathbb{E}\bigl[e^{-V_\la(Y)}\bigr]
=
\sum_{m\ge 1}\frac{(-1)^m}{m!}\,\cum\!\bigl(V_\la(Y)^{\otimes m}\bigr),
\ee
where $\cum$ denotes the corresponding cumulant. In field-theoretic language, the terms in~\eqref{eq:cumulant} are precisely the \emph{loop corrections} to mean field: Wick expansion of the Gaussian moments \cite[Sections 5.1]{Kardar2013}, \cite[Section 1.1]{zinn-justin2021} produces a sum over Feynman diagrams, and the cumulant picks out the connected diagrams, which correct the free energy $\mathcal{F}_{d}(\la)$ order-by-order in $\la^{-1}$ \cite[Sections 9.1]{brezin2010}, \cite[Section 1.2 and Chapter 7]{zinn-justin2021}, \cite[Sections 3.13]{etingof2024}.

Physics literature makes extensive use of truncations of~\eqref{eq:cumulant} (or equivalent loop expansions). However, derivations are usually done without rigorous error control \cite[Sections 2.1 and 2.2]{Kardar2013} and \cite[Sections 1.1--1.3]{brezin2010}. For finite $d$, the error control follows from classical asymptotic analysis \cite{bh, wong}. For growing $d$, such expansions have never been justified except for some specific cases.

In  summary, {\it mean field approximation and loop/cumulant expansions are central computational tools in statistical physics, but rigorous remainder estimates (especially in large systems with many degrees of freedom, $d\gg1$) are often absent from the standard presentations.}

Our results provide a \emph{rigorous} version of the loop-correction program for~\eqref{eq:Z_Laplace} in a joint limit where both the dimension $d$ (number of effective degrees of freedom retained in the reduced description) and the Laplace prefactor $\la$ grow.  Concretely, under mild confining assumptions ensuring integrability and assuming a unique nondegenerate minimum $x_\star$ together with bounded operator norms of derivatives near $x_\star$, we obtain for every integer $L\ge 1$ an expansion of $\log Z_{d}(\la)$ through $L-1$ loops whose remainder is explicitly controlled (see \eqref{main res intro}) 
\bs\label{our_remainder}
\log Z_{d}(\la) =&
\Bigl[\text{mean field}\Bigr] +
\sum_{\ell=1}^{L-1}\Bigl[\text{$\ell$-loop / $\ell$th cumulant correction}\Bigr]
+\mathcal{R}_{L}(d,\la),\\
|\mathcal{R}_{L}(d,\la)|
\lesssim & d^{L+1}/\la^{L},
\es
uniformly over the stated class of $f$.  The scaling condition $d^{L+1}/\la^{L}\to 0$,
$d,\la\to\infty$ is exactly the statement that \emph{$(L-1)$-loop-corrected mean field has a provable accuracy guarantee} in a growing-system regime. From a physics viewpoint,~\eqref{our_remainder} can be read as supplying the missing ``error bars'' for a procedure that is otherwise typically justified heuristically (or by numerics), thereby turning the loop expansion into a rigorously justified approximation scheme in regimes where the effective number of degrees of freedom grows with the large parameter in the exponential.

\subsection{Application in statistics}
For the sake of brevity, we focus on Laplace-type integrals and densities in the context of \emph{Bayesian} statistics; however, these quantities frequently arise in frequentist statistics as well.
\label{ssec:statistics_motivation}
\subsubsection{Statistics context and state of the art}
Laplace-type integrals $\int_{\br^d} g(x)e^{-\la f(x)}\dd x$ and probability distributions $\pi(x)\propto e^{-\la f(x)}$ are omnipresent in statistics. Here, $\la$ (more commonly denoted $n$ in statistics) plays the role of sample size, and $f$ depends weakly on $\la$~\cite{tierney_kadane_1986}. In Bayesian inference, $\pi(x)=P(x\mid \mathrm{data})$ is the posterior probability distribution of an unknown parameter $x$ given $\mathrm{data}$, consisting of $\la=n$ independent data points. Once the posterior has been specified, one is typically interested in (1) computing summary statistics, which take the general form $\E_{X\sim\pi}[g(X)]$, (2) sampling from $\pi$, and (3) computing the normalizing constant $\int e^{-\la f(x)}\dd x$ itself~\cite{bda3}. Summary statistics distill information in the posterior; samples enable exploration and uncertainty quantification. The normalizing constant, known as the model evidence~\cite{kass_raftery_1995}, underpins Bayesian model selection. Here, one finds the best model by optimizing the evidence over candidate models~\cite{kass_raftery_1995, wasserman2000bayesian},~\cite[Chapter 5]{ando2010bayesian}. All three tasks become particularly demanding in the growing-$d$ regime that now arises routinely in applications.

The problem of how to do these computations efficiently has been actively studied for decades. A core approach is to replace $\pi$ with a tractable approximation $\hat\pi$, and the literature spans both theoretical and numerical methods; see, e.g.,~\cite{tierney_kadane_1986,huggins2018practical,schillings2020convergence,durante2024skewed, blei2017variational} among many others. Within the theoretical analyses, a recent line of work focuses on the dependence of the total variation error $\mathrm{TV}(\pi,\hat\pi)$ on both $d$ and $\la$, either by sharpening error control for existing approximations $\hat\pi$ or by constructing improved approximations. In particular,~\cite{spok23,dehaene2019deterministic,helin2022non} showed that for the standard Laplace approximation $\hat\pi=\mathcal N\!\big(x_\star,(\la\nabla^2 f(x_\star))^{-1}\big)$, the TV error scales as $d\sqrt d/\sqrt\la$. Via a tighter analysis,~\cite{kasprzak2025, katskew,katsevich2025unified} improved this dimension dependence to $d/\sqrt\la$, and~\cite{KRVI} showed that the closely related Gaussian variational-inference approximation also achieves TV error $d/\sqrt\la$. Meanwhile,~\cite{katskew} and~\cite{durante2024skewed} proposed new approximations incorporating third-order derivative information, improving the $\la$ dependence to $\mathrm{TV}(\pi,\hat\pi)\les d^2/\la$ and $\les d^3/\la$, respectively.

While approximating $\pi$ by a tractable $\hat\pi$ is natural for sampling and for evaluating $\E_{X\sim\pi}[g(X)]$ when $g$ is nonsmooth, one can often do better when $g$ is smooth by working directly with the LE. Indeed, $\E_{X\sim\pi}[g(X)]$ is a ratio of two Laplace-type integrals. Expanding both the numerator and the denominator and then taking the ratio yields an explicit, fully deterministic approximation to $\E_{X\sim\pi}[g(X)]$, avoiding the Monte Carlo error inherent in estimating expectations via samples from $\hat\pi$. Despite this advantage, LEs have received comparatively little attention as a tool for computing posterior expectations. Notable exceptions are the fixed-$d$ analyses in~\cite{tierney_kadane_1986, tierney_kass_kadane_1989}.

In contrast, LE-based approximations are widely used for model selection problems that maximize the normalizing constant (model evidence) $\int e^{-\la f(x)}\dd x$ over a tuning parameter. A key attraction is that the LE provides an explicit analytic surrogate objective as a function of the tuning parameter, enabling efficient optimization. This stands in sharp contrast to methods that typically provide only black-box numerical access to the objective, such as Markov Chain Monte Carlo~\cite{robert1999monte}. However, this convenience of the LE is meaningful only when the approximation error is controlled in a dimension-dependent manner. In particular, concerns about the reliability of the Bayesian Information Criterion (BIC) in high dimensions have been noted explicitly. The BIC introduces an additional approximation on top of the LE to further simplify the optimization objective. In~\cite[p. 377]{drton2017model}, the authors write ``for models with a large number of predictors [large $d$] it is no longer clear that some version of the BIC, or perhaps rather a Laplace approximation, accurately approximates a marginal likelihood''.

While recent work has begun to provide rigorous, dimension-dependent guarantees for high-dimensional LEs for normalizing constants~\cite{barber2016laplace,tang2025,katsevich2025a}, these require more restrictive scalings (at best, that $d^2/\la\ll 1$).

\subsubsection{Implication of our results in statistics}
\emph{Our work essentially completes the program of finding tractable analytic approximations of posterior densities, expectations, and normalizing constants in the high-dimensional but concentrating regime.} We propose a combined approach for these related problems which allows dimension to grow relative to $\la$ arbitrarily close to the concentration threshold.

For sampling, we construct a sequence of arbitrarily accurate approximations $\hat\pi_L$ which can be sampled from using an explicit algorithm. For computing expectations of smooth functions, we give an arbitrarily accurate closed-form formula based on the LE of numerator and denominator. For normalizing constants,~\eqref{main res intro} directly applies with $g\equiv1$. The approximation error for all these tasks is $d^{L+1}/\la^L$, which has the crucial feature that increasing $L$ not only improves the accuracy but also expands the range of applicable $d$, since we can take $d$ as large as $o(\la^{L/L+1})$. 

We now give more detail about our methods. 

\paragraph*{Sampling and nonsmooth expectations.}

To approximately sample from $\pi$, the algorithm consists of drawing $Z_i$ from $\mathcal N(0,\la^{-1}I_d)$ and mapping it through one of the $x_L$, $L=1,2,3,\dots$, depending on the desired accuracy. To approximate expectations of nonsmooth functions $g$, the samples $X_i=x_L(Z_i) \sim \hat{\pi}_L$ can also be used, via 
\begin{equation}\label{eq:monte-carlo-approx}
  \E_{X \sim \pi}[g(X)]
    \approx \E_{X \sim \hat{\pi}_L}[g(X)]
    \approx \frac{1}{N}\sum_{i=1}^{N} g(X_i).
\end{equation}

\paragraph*{Normalizing constants and smooth expectations.}
Approximating normalizing constants is immediate using~\eqref{main res intro} with $g\equiv1$, so we don't discuss it further. To approximate expectations $\E_{X \sim \pi}[g(X)] = \int ge^{-\la f}/\int e^{-\la f}$ of smooth $g$, we use~\eqref{main res intro} for the numerator and denominator. This gives the approximation
$\E_{X \sim \pi}[g(X)]\approx\exp\bigl(\sum_{k=1}^{L-1}[b_k(f,g) - b_k(f,1)]/\lambda^k\bigr)$, which has accuracy $\CO(d^{L+1}/\la^L)$ uniformly over sufficiently smooth $g$ with bounded derivatives near $x_\star$. 

This estimate improves on~\eqref{eq:monte-carlo-approx} in two ways. First, it does not incur any sampling error. Second, it is constructed using fewer derivatives of $f$:
$2L-1$ versus $2L+1$ for $\hat{\pi}_L$. Intuitively, $\hat{\pi}_L$ does not exploit smoothness of $g$ and must compensate with more information from $f$. In practice, this matters because $f$ encodes the data likelihood and is expensive to differentiate, while the observable $g$ is typically simple. For example, when $L = 2$, the closed-form estimate $\exp([b_1(f,g) - b_1(f,1)]/\la)$ has accuracy $d^3/\lambda^2$ using only three derivatives of $f$, whereas $\hat{\pi}_1$ also uses three derivatives but achieves only the lower accuracy $d^2/\lambda$. Thus using the closed-form asymptotic estimate \emph{improves the accuracy by a factor of $d/\la$ over sampling approaches without requiring any more derivatives}. We emphasize that most approaches in the literature are of the sampling type and achieve accuracy $d^2/\la$ (our $\hat\pi_1$ and~\cite{katskew}) or $d^3/\la$~\cite{durante2024skewed} at best, even disregarding the Monte Carlo error. See Section~\ref{subsec:meas:discuss} for a detailed comparison to the two works closest to ours,~\cite{durante2024skewed, katskew}.

\subsection{Connection to cumulant theory}
\label{sec:cumulant-intro}
As stated above, the coefficients $b_k$ in~\eqref{main res intro} can be derived from formal cumulant expansions. Although computing the \emph{coefficients} using cumulants is easy, bounding the \emph{remainder} in the expansion of $\log I(\la)$ using cumulant theory seems deeply nontrivial. Indeed, cumulants have been thoroughly studied in the statistics, physics and combinatorics literature, yet rigorous remainder bounds on cumulant expansions in the growing $d$ regime are virtually non-existent. This suggests that bounding the remainder through the lens of cumulants may be intractable. In contrast, our change-of-variables approach is not only tractable, but also avoids any heavy machinery, as mentioned above.

Our remainder bound makes rigorous the well-known observation that, based on the terms, expanding the cumulant generating function (cgf) is advantageous to expanding the moment generating function (mgf). Specifically, it is well-known that each cumulant is given by a sum of a fewer number of summands than the corresponding moment. For example at the end of Section 3.10.1 of~\cite{mccullagh2018}, McCullagh writes that the formula for cumulants is a sum only over ``connected pairs" of bi-partitions, as opposed to the formula for moments. In~\cite[Chapter 5.3]{Kardar2013}, the author writes, ``When calculating cumulants, only fully connected diagrams (without disjoint pieces) need to be included. This is a tremendous simplification." 

The simplification  due to this summing over fewer terms in fact leads to a substantial improvement. By analyzing the cumulants and moments directly in our setting, it is possible to show that the moments that contribute to the $k$th coefficient in the $I(\la)$ expansion are generically of order $\CO(d^{2k})$, while the cumulants that contribute to the $k$th coefficient in the $\log I(\la)$ expansion are generically of order $\CO(d^{k+1})$. Of course, this analysis of the terms does not rigorously prove anything about the remainders. However, it naturally leads to the hypothesis that the remainders of the two expansions behave like the first terms dropped from the truncation, i.e. that the $L$th order remainders scale as $\CO(d^{2L}/\la^L)$ and $\CO(d^{L+1}/\la^L)$, respectively. This is precisely what the previous work~\cite{katsevich2025a} and the present paper rigorously prove.

Thus, as mentioned above, expanding the logarithm of $I(\la)$ is what allows us to relax the $d^2\ll\la$ requirement in~\cite{katsevich2025a}. Given our result~\eqref{main res intro}, it is very easy to see why $d^2\ll\la$ is necessary for an \emph{additive} expansion of $I(\la)$ to any order. Indeed, we have shown in~\eqref{main res intro} that, to leading order, $I(\la)=\exp(\CO(d^2/\la))$. Thus if we Taylor expand the exponential to obtain an additive expansion of $I(\la)$, powers of $d^2/\la$ are present throughout. There is no finite number of terms we can \emph{subtract} from $I(\la)$ to remove the dependence on $d^2/\la$. However, we \emph{can} remove the dependence on $d^2/\la$ simply by \emph{dividing} $I(\la)$ by the single term $\exp(b_1(f,g)/\la)$. The multiplicative remainder is then of order $\exp(\CO(d^3/\la^2))$.

In Section~\ref{sec:examples}, we illustrate the power of expanding $\log I(\la)$ in two concrete examples:  a quartic perturbation of a Gaussian exponent, and a logistic-regression-type likelihood motivated by statistics. 

\subsection{Related work}
There is a related body of work on Laplace-type integrals arising in statistics (e.g. posterior normalizing constants and marginals) and statistical physics  (partition functions) in the ``proportional asymptotics" regime. Here, $d/\la$ converges to a constant. The regime lies beyond the concentration threshold, so these integrals can only formally be considered to be Laplace-type. Due to the lack of concentration, entirely different techniques are required to study such asymptotics. See~\cite{lee2025clt,saenz2025characterizing,celentano2023mean, mukherjee2024naive,mukherjee2022variational}, as well as the referenced cited therein.


\subsection*{Organization}
The paper is organized as follows. Section~\ref{sec:notation} introduces the notation and conventions used throughout. In Section~\ref{sec:main res}, we describe the problem setting and state the main result, and Section~\ref{sec:proof} outlines the proof strategy. Section~\ref{sec:in} derives the expansion of the main contribution to the integral coming from a neighborhood of zero, while Section~\ref{sec:cum} computes the expansion coefficients in terms of cumulants. The tail contribution is estimated in Section~\ref{sec:out}. Section~\ref{sec:meas} presents our results 
on the approximation of $\pi\propto e^{-\la f}$ and on computing expectations against $\pi$. Section~\ref{sec:examples} compares our results with related work in the literature, and presents two applications of our expansion. Various technical results are collected in the appendices.

\section{Notation and conventions}\label{sec:notation} 
When we write e.g. $\sum_{k\geq2}a_k$, the sum is over a finite number of $k$'s. Also, any sum of the form $\sum_{k=1}^0 a_k$ is understood to be omitted.

\begin{definition}\label{def:Akj}
A \emph{tensor} $A_{k\to j}$ is a multilinear map taking in $k$ vectors in $\br^d$ and returning a scalar if $j=0$, a vector in $\br^d$ if $j=1$, or a $d\times d$ matrix if $j=2$. We will typically use the letters $G$ and $F$ instead of $A$. We say $A_{k\to j}$ is \emph{symmetric} if for all permutations $\sigma:\{1,\dots,k\}\to\{1,\dots,k\}$ and vectors $x_1,\dots,x_k\in\br^d$ it holds $A_{k\to j}[x_1,\dots,x_k]=A_{k\to j}[x_{\sigma(1)},\dots,x_{\sigma(k)}]$. Note that symmetry refers to the input space (permuting the $k$ arguments) rather than to the output space. In particular, $A_{k\to 2}$ can be symmetric even if $A_{k\to 2}[x_1,\dots,x_k]$ is not a symmetric matrix. 

When $j=0$, we often omit $\to 0$ in the subscript and simply write $A_k$. When $x_1=\dots=x_k=x\in\br^d$, we write $A_{k\to j}[x^{\otimes k}]$ instead of $A_{k\to j}[x,x,\dots,x]$. We also allow $k=0$; then $A_{0\to 1}$ is a constant vector-valued function and $A_{0\to2}$ is a constant matrix-valued function. 
\end{definition}

In some contexts, the term \emph{tensor} is used to refer to multilinear forms (the case $j=0$). Here we adopt the convention of Definition~\ref{def:Akj} and also call $A_{k\to 1}$ and $A_{k\to 2}$ tensors, i.e., vector- and matrix-valued multilinear maps.

The operator norm of a symmetric $k$-th order tensor $A_k$, i.e., $A_{k\to0}$, is given by \cite{zhang2012}
\be\label{T norm}
\Vert A_k\Vert_{\op}:=\sup_{\|u\|=1} A_k[u^{\otimes k}].
\ee
For a function $f\in C^k(\br^d)$, the $k$-th derivative tensor is
\be
(\nabla^kf(x))_{i_1\dots i_k} = \pa_{x_{i_1}}\dots\pa_{x_{i_k}}f(x). 
\ee

\noindent We let $\e=d/\la$, and fix an integer $L\ge 1$.  


\section{Statement of result}\label{sec:main res}

We study the asymptotics of the integral
\be\label{main int v0}
I(\la)=\left(\frac{\la}{2\pi}\right)^{d/2}\int_{\br^d}g(x)e^{-\la f(x)}\dd x
\ee
as $d,\la\to\infty$. We make the following assumption on $f$ and $g$.

\begin{assumption}\label{ass:fg}$\hspace{1cm}$
\begin{enumerate}[label=(\arabic*)]
\item\label{Ial} There exists $r_0>0$ independent of $d$ and $\la$ such that 
$f$ satisfies 
\bs\label{eq:Taylor-f}
f(x)=&f_{2L+1}(x)+\mathcal{R}_{2L+2}(x),\ \norm{x}\le r_0,\\
f_{2L+1}(x)=& \tfrac12 \norm{x}^2 + \textstyle\sum_{k=3}^{2L+1}\tfrac{1}{k!}\nabla^k f(0)[x^{\otimes k}].
\es
The operator norms and the remainder are bounded:
\bs\label{Fk norms}
\|\nabla^kf(0)\|_{\op} \le& \argc{f}, \ 3\le k\le 2L+1;\quad 
|\mathcal{R}_{2L+2}(x)| \le \argc{f} \norm{x}^{2(L+1)},\ \norm{x}\le r_0.
\es 
\item\label{Ial g}We have $g(0)=1$. For the same $r_0$ as in~\ref{Ial}, we have 
\bs\label{eq:Taylor-g}
\log g(x)=& \textstyle\sum_{k=1}^{2L-1} \tfrac{1}{k!}\nabla^k(\log g)(0)[x^{\otimes k}] + \mathcal{R}_{2L}^{(g)}(x),\ \norm{x}\le r_0.
\es
The operator norms and the remainder are bounded
\bs\label{Gk norms}
\|\nabla^k\log g(0)\|_{\op} \le& \argc{g}, \ 1\le k\le 2L-1;\quad 
|\mathcal{R}_{2L}^{(g)}(x)| \le \argc{g} \norm{x}^{2L},\ \norm{x}\le r_0.
\es
\item There exists a constant $\kappa>0$ independent of $d,\la$ such that
\be\label{eq:coercive gf}
|g(x)|e^{-\la f(x)}\leq \exp\left(\argc{g}-\la\kappa\min\left\{\|x\|^2/2, \sqrt{\e}\norm{x}, d^{-1/(2L)}\log(1+\norm{x})\right\}\right),\quad\forall\,x\in\br^d.
\ee 
Without loss of generality, assume $\kappa\leq 1/e$, and that $\argc{g}$ is the same as in part~\ref{Ial g}.
\end{enumerate}
\end{assumption}
In~\eqref{eq:coercive gf} and everywhere below, recall that $\e=d/\la$. 

The case of a general minimizer $x_\star$ of $f$ and general positive definite $\nabla^2f(x_\star)$ can be reduced to the one assumed above by an affine change of variables. By linearity, the normalization condition $g(0)=1$ is not restrictive. A sufficient condition for~\eqref{eq:coercive gf} to be satisfied is
\begin{align}
f(x)&\geq\kappa_f\min\left\{\|x\|^2/2, \sqrt{\e}\norm{x}, d^{-1/(2L)}\log(1+\norm{x})\right\},\quad x\in\br^d,\label{f-suff}\\
|g(x)|&\leq \min\left\{e^{\kappa_g\la\|x\|^2/2}, e^{\kappa_g\sqrt{d\la}\|x\|}, (1+\|x\|)^{\la\kappa_g/d^{1/(2L)}}\right\},\quad \|x\|\geq r_0,\label{g-suff}\\
\kappa&:=\kappa_f-\kappa_g>0.
\end{align}
When $x$ is small, the minimum in the first line is equal to $\min\{\|x\|^2/2, \sqrt{\e}\norm{x}\}$. When $x$ is large, the minimum is given by $d^{-1/(2L)}\log(1+\norm{x})$. Thus we only require that $f$ grows logarithmically at infinity. Meanwhile, $g$ can grow at most polynomially at infinity under~\eqref{g-suff}, but the power of the polynomial can become arbitrarily large as $\la\to\infty$. \\

For positive scalars $a,b$, the notation $a\lesssim b$ and $a=\CO(b)$ both mean that $a\le cb$, where the suppressed constant $c$ can depend on the constants $\argc{f}$ and $\argc{g}$ appearing in Assumption~\ref{ass:fg} but not on $d,\la,r_0,\kappa$. 
Since $L\ge 1$ is fixed, the dependence of constants on $L$ is ignored. In a similar vein, when we say ``for all $\delta$ sufficiently small, it holds....'' we mean there exists $c(\argc{f},\argc{g})$ such that if $0<\delta\leq c(\argc{f},\argc{g})$, then the statement after the ellipses is true. Here, $\de$ denotes an arbitrary small parameter that is relevant to the discussion at hand. In most cases, $\de=R\sqrt\e$ (see below for the definition of $R$).

The following is our first main result.
\begin{theorem}\label{thm:main res} Fix any $L\ge 1$. Suppose $f,g$ satisfy Assumption~\ref{ass:fg} and $d\geq 2L$. There exists a sufficiently large $c^-=c^-(\argc{f},\argc{g})>0$ and sufficiently small $c^{+}=c^+(\argc{f},\argc{g})>0$ such that for any $R,d,\la$ satisfying 
\be\label{R ineq v1}
R\geq c^-\max\left(1,\tfrac{1}{\kappa}\left[\sqrt{\log\tfrac1\kappa}+\log R+\tfrac{\log\la}d\right]\right),\quad \frac{(R^2d)^{L+1}}{\la^L}\leq c^+,\quad R\sqrt{d/\la}\leq r_0,\ee
it holds $I(\la)>0$, and
\bs\label{main res}
\Big|\log I(\la) & -\sum_{k=1}^{L-1} b_k(f,g)\la^{-k}\Big|
\les \frac{(R^2d)^{L+1}}{\la^L}.
\es
The coefficients $b_k(f,g)$ depend only on $\nabla^\ell f(0)$, $\ell=3,\dots,2k+2$, $\nabla^\ell \log g(0)$, $\ell=1,\dots,2k$, but not explicitly on $d$ or $\la$, and their formula is given in Lemma~\ref{prop:ad}. Moreover, $|b_k(f,g)|\les d^{k+1}$.
\end{theorem}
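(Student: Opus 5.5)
Rescale $x=\la^{-1/2}y$, so that $I(\la)=\E[g(Z/\sqrt\la)e^{-\la(f(Z/\sqrt\la)-\|Z\|^2/(2\la))}]$ with $Z\sim\mathcal N(0,I_d)$. Split the integral into the inner region $\|x\|\le R\sqrt{\e}$ and its complement. The growth condition~\eqref{eq:coercive gf}, together with the lower bound on $R$ in~\eqref{R ineq v1}, shows that the outer contribution is at most $e^{-cR^2 d}$ relative to the inner one. This is negligible against $(R^2d)^{L+1}/\la^L$. The three branches of the minimum in~\eqref{eq:coercive gf} handle the ranges $\|x\|\lesssim\sqrt\e$, $\|x\|$ up to a power of $\la$, and $\|x\|$ large, respectively; the $\log\la/d$ and $\log R$ terms in the condition on $R$ absorb the volume factors. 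The real work is therefore the inner integral.

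On the inner ball, the plan is to avoid expanding $e^{-\la(f-\|x\|^2/2)}$ additively, since that produces powers of $d^2/\la$. Instead, construct a near-identity polynomial change of variables
$$x=T(y)=y+\textstyle\sum_{k=2}^{2L}\la^{-?}\text{-free tensors } F_{k\to1}[y^{\otimes k}],$$
determined order by order in $\|y\|$, such that
$$\la f(T(y))-\log g(T(y))-\log\det\nabla T(y)=\tfrac\la2\|y\|^2-\textstyle\sum_{k=1}^{L-1}b_k\la^{-k}+\text{remainder}.$$
The polynomial part of degree $\le 2L+1$ in $y$ should be killed except for constants. Concretely, the degree-$m$ terms of $\la f\circ T$ are cancelled by choosing $F_{m-1\to1}$ with $\la\langle y,F[y^{\otimes (m-1)}]\rangle$ matching. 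The $\log\det$ and $\log g$ terms, which are a factor $\la$ smaller, are non-constant polynomials of lower degree. These are handled by Gaussian-integrating out their non-constant parts: write each polynomial as its Gaussian mean (a trace, of size $\CO(d^{j})$) plus a mean-zero part, and remove the mean-zero part by a further correction of $T$. This is the derivation that yields coefficients $b_k$ made of traces, with $|b_k|\lesssim d^{k+1}$. Each trace contracts indices, so the constant of a degree-$2j$ form with $\CO(1)$ operator norm is $\CO(d^{j})$. Counting powers, each $\la^{-k}$ comes with at most $k+1$ traces.

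Then $I_{\rm in}=e^{\sum b_k\la^{-k}}\,(\la/2\pi)^{d/2}\int_{T^{-1}(\text{ball})} e^{-\la\|y\|^2/2+\mathcal E(y)}\dd y$. It remains to show that $T$ is a diffeomorphism of $\{\|y\|\le R\sqrt\e\}$ onto a set containing a slightly smaller ball, which follows from $\|\nabla T-I\|\lesssim R\sqrt\e\ll1$. One also needs $|\mathcal E(y)|\lesssim (R^2d)^{L+1}/\la^L$ uniformly on the ball. Here the remainder terms are of the form $\la\|y\|^{2L+2}\lesssim \la (R^2\e)^{L+1}=(R^2d)^{L+1}/\la^L$, and similarly for the $\log\det$ and $\log g$ remainders. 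Taking logarithms and comparing with the Gaussian mass of the ball (which is $1-e^{-cR^2d}$) gives~\eqref{main res}, and positivity of $I(\la)$ follows at the same time. Finally, one checks that the $b_k$ agree with those of Lemma~\ref{prop:ad} (the cumulant formula). This can be done by formal power-series identification, since both are universal polynomial expressions in the derivative tensors.

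The main obstacle is the uniform bound on $\mathcal E$: pointwise, mean-zero polynomials such as $\la^{-1}(\|y\|^2\la-d)$-type terms are not small on the ball of radius $R\sqrt\e$. They must therefore be genuinely removed by the change of variables, not merely bounded. The crux is organizing the iteration so that each removal step produces only terms that are either constants (collected into $b_k$) or of size $\lesssim (R^2d)^{L+1}/\la^L$ on the ball, and verifying that the $\la$-bookkeeping closes at order $L$.
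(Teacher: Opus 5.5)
Your overall strategy matches the paper's: split at radius $R\sqrt{\epsilon}$, expand $\log I_{\mathrm{in}}$ rather than $I_{\mathrm{in}}$ via a near-identity polynomial map that first removes the degree $3,\dots,2L+1$ part of $f$, then match the coefficients with the cumulant formula. The gap is that the step you defer as ``the crux'' is the actual content of the proof, and your sketch of it would not work as written. Three problems:

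\emph{The Jacobian is not small enough to skip iterating.} It is not ``a factor $\lambda$ smaller'' than $\lambda f$. One has $\log\det(I_d+\epsilon^m\varphi'(s))=d\epsilon^m\cdot(\text{bounded polynomial})=\lambda\epsilon^{m+1}\cdot(\text{bounded polynomial})$, which is only a factor $\epsilon=d/\lambda$ smaller. On the ball, the degree-$k$ part of the first Jacobian therefore has size $d(R\sqrt{\epsilon})^k$, negligible against $R^{2L+2}d\epsilon^L$ only for $k\ge 2L$. This is why $L-1$ further rounds are needed, each gaining exactly one factor of $\epsilon$.

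\emph{Your map cannot remove the low-degree terms.} A map $y+\sum_{k\ge2}F_{k\to1}[y^{\otimes k}]$ cannot remove the linear and quadratic pieces $d\,a^\top y$ and $d\,y^\top Jy$ coming from $\log g$ and the Jacobians. On the ball these are as large as $Rd^{3/2}\lambda^{-1/2}$ and $R^2d^2/\lambda$. They require a translation and a linear map, i.e.\ exact Gaussian integration. The constant this produces, $\frac{\lambda}{2}a^\top B^{-1}a$, is a second-order by-product of the removal (the Gaussian mean of a linear term is zero), not a Gaussian mean or trace of anything you removed. Read charitably, ``remove the mean-zero part'' is the linearized transport equation $\lambda\langle y,\psi\rangle-\mathrm{div}\,\psi=P-\mathbb{E}P$, which does contain affine pieces; but then the second-order errors of every step must be tracked.

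\emph{The coefficient bound is asserted, not derived.} The claim that each $\lambda^{-k}$ carries at most $k+1$ traces is exactly the $d^{k+1}$ versus $d^{2k}$ distinction the theorem is about.

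The paper makes the bookkeeping close by grading in powers of $\epsilon$ instead of counting traces. All coefficients are composite tensors: polynomials in $\epsilon$ with $\mathcal{O}(1)$ operator-norm coefficients, a class closed under composition, matrix products and $\frac1d\mathrm{tr}$. Degree-one and degree-two terms are never removed during the iteration. The exponent is $\lambda E_m$ with $E_m(t)=\epsilon F_1[t]+\epsilon F_2[t^{\otimes2}]+\frac12\|t\|^2+\epsilon^m\sum_{k=3}^{2L-2m+1}F_k[t^{\otimes k}]$. The map $T_m(s)=s+\epsilon^m\varphi_m(s)$ pushes the degree-$\ge3$ part to order $\epsilon^{m+1}$. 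Its log-Jacobian, $\lambda\epsilon^{m+1}\cdot(\text{polynomial})$, adds nothing of lower $\epsilon$-order; its degree-one and degree-two parts are absorbed into the order-$\epsilon$ composite coefficients. All terms of degree $>2L-2m-1$ at order $\epsilon^{m+1}$ contribute $\mathcal{O}(R^{2L+2}d\epsilon^L)$ on the ball. After $L-1$ rounds $E_L$ is exactly quadratic. Completing the square gives $\frac{\lambda}{2}a^\top B^{-1}a-\frac12\log\det B$ with $a=\epsilon J_1$ and $B=I_d+2\epsilon J_2$, and expanding in $\epsilon$ gives $\sum_k d\epsilon^k(\cdot)$, i.e.\ $|b_k|\lesssim d^{k+1}$.

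Two smaller points. First, the tail is not $e^{-cR^2d}$. The logarithmic branch of the growth condition only gives $(d^{1+1/L}/(\kappa^2\lambda))^{d/2}$, and the linear branch gives $e^{-d(\kappa R-\frac12-\log R)}$. These fall below $R^{2L}d^{L+1}/\lambda^L$ only because $d\ge2L$ and because of the $\log\lambda/d$ and $\log R$ terms in the lower bound on $R$. Second, ``both are universal polynomials'' does not by itself identify the $b_k$ with the cumulant formula. You need, as the paper does, that at fixed $d$ both are the Taylor coefficients of $\log I_{\mathrm{in}}$ in $t=\lambda^{-1/2}$; on the cumulant side this comes from Taylor-expanding the exponent in $t$ under a truncated Gaussian. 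This is also what shows the $b_k$ carry no explicit $d$-dependence.
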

To illustrate the convention introduced before Theorem~\ref{thm:main res}, we note that the suppressed constant in \eqref{main res} and in the bound on $|b_k(f,g)|$ may depend on $\argc{f}$, $\argc{g}$ only, but not on $d,\la,r_0,\kappa$. Note also that, according to our convention, the sum with respect to $k$ in \eqref{main res} is omitted if $L=1$.

We give a closed-form formula for $b_1(f,g)$ in Section~\ref{sec:c1}. As is seen, in the worst case the first inequality in \eqref{R ineq v1} forces $R\sim\log\la$, up to a factor depending on $\argc{f},\argc{g},\kappa$. This occurs when $d$ remains bounded as $\la\to\infty$. On the other hand, if $d\geq c(\argc{f},\argc{g},\kappa)\log\la$, then \eqref{R ineq v1} is satisfied with $R$ being a constant independent of $d$ and $\la$.

\section{Proof outline}\label{sec:proof}

\subsection{Key steps}
Here, we outline the main ideas of our approach. Without loss of generality, we suppose $x_\star=0$ is the global minimizer of $f$, with $\nabla^2f(0)=I_d$, so that $f(x)=\|x\|^2/2+o(\|x\|^2)$ near zero. For simplicity, we only describe the proof in the case $g\equiv1$. Incorporating a non-constant $g$ presents no real challenges. On the surface, our proof begins similarly to classical fixed-$d$ proofs of the expansion based on the Morse Lemma~\cite{wong}. This lemma states that there is a change of variables $x=x(t)$ making the exponent $f(x(t))$ an exact quadratic. This approach, in its pure form, seems to be intractable because the coordinate transformation from the Morse Lemma is difficult to work with. \\

\noindent Instead, we use a variation of the approach. Fix any $L\ge1$. \\
\textbf{Step 1: initial change of variables.} We begin by constructing an explicit local polynomial change of variables $X(t)$, of the form $X(t)=t+\CO(\|t\|^2)$, $t\to0$, which makes the exponent ``more quadratic'' but not exactly quadratic. Specifically, it eliminates the third through $(2L+1)$st order terms in the Taylor expansion of $f$ around the minimizer, so that $-\la f(X(t))=-\la\norm{t}^2/2+\CO(\la\norm{t}^{2L+2})$. But $\CO(\la\norm{t}^{2L+2})$ is of order $O(d^{L+1}/\la^L)$ (and therefore negligible) in the region $\|t\|\les \sqrt{d/\la}$ where the integral concentrates. Thus $\CO(\la\norm{t}^{2L+2})$ can be discarded.

The price we pay for this nice change of variables is the appearance of the Jacobian $\detm(X^\prime(t))$ of the coordinate change, which we bring into the exponent. Thus the new exponential function is, upon throwing out the negligible $\|t\|^{2L+2}$ term, given by $\exp(-\la\|t\|^2/2 + \log\detm(X^\prime(t)))$. Crucially, however, $\log\detm(X^\prime(t))$ scales only as $d\ll \la$. Thus the log-Jacobian does not significantly affect the quadratic exponent created by the change of variables. Specifically, we may write $\log\detm(X^\prime(t))=dh(t)$, where $h(t)$ can be Taylor-expanded as $h(t)=\sum_{k\geq1}F_k[t^{\otimes k}]+(\text{negligible remainder})$ for some tensors (multi-linear forms) $F_k$ with bounded operator norms. Here and below, sums in which the upper limit has not been explicitly indicated are understood to mean finite sums. The above arguments lead to
\bs\label{step1-change intro}
I(\la)= & e^{\CO(d^{L+1}/\la^L)}\Big(\frac{\la}{2\pi}\Big)^{d/2}\int_{\us_1} \exp\big(-\la E_1(t)\big)\dd t,\\
E_1(t):=&\frac12\|t\|^2+\e\sum_{k\geq1}F_k[t^{\otimes k}],
\es where $\e=d/\la$, $\us_1=\{\|t\|\les \sqrt{\e}\}$, and we have discarded the negligible integral over $\us_1^c$. Now, we must somehow deal with the terms $\e F_k[t^{\otimes k}]$, $k\geq1$. The linear and quadratic terms ($k=1,2$) are not a problem, since they can be combined with $\|t\|^2/2$ to give a new Gaussian measure, and integrating against Gaussians is tractable. For $k\ge 2L$, the terms $\la\e|F_k[t^{\otimes k}]|$ are sufficiently small uniformly over $t\in\us_1$, so that $\exp(-\la\e F_k[t^{\otimes k}])$ can be discarded from the integral (meaning, absorbed in the multiplicative remainder $e^{\CO(d^{L+1}/\la^L)}$). But there is an intermediate range of $k$'s, namely, $3\le k\le 2L-1$, for which additional massaging is needed. We do this in the next step. In what follows, all our formulas ignore terms that lead to multiplicative factors $e^{\CO(d^{L+1}/\la^L)}$ in $I(\la)$. \\

\noindent
\textbf{Step 2: iterative refinement.} We construct a polynomial change of variables of the form $T_1(s)=s+\e\varphi_1(s)$, with $\varphi_1(s)=\CO(\|s\|^2)$, analogous to $X(t)$ above. By choosing $\varphi_1$ appropriately, we can ensure that this change of variables increases the power of $\e$ in front of $F_k[t^{\otimes k}]$ for as many $k\geq3$ as we like. Thus
\be\label{E1intro}
E_1(T_1(s))=\e s+\e s^2+\tfrac12\|s\|^2+\e^2\sum_{k= 3}^{2L-3}s^k.
\ee where $s^k$ is a mnemonic form for $F_k[s^{\otimes k}]$, and $F_k$ is some tensor with bounded operator norm. Furthermore, due to the form of $T_1(s)$, we have 
\be\label{1laintro}
\tfrac1\la\log\det(T_1^\prime(s))=\e^2\sum_{k=2}^{2L-3}s^k,
\ee 
informally. Here, one of the powers of $\e$ comes from $\e\varphi_1(s)$, and the other power of $\e$ comes from $\frac1\la\times d$, since $\log\det\sim d$. Since the powers of $\e$ in~\eqref{1laintro} are greater than or equal to the corresponding powers of $\e$ in~\eqref{E1intro}, adding $\frac1\la\log\det(T_1^\prime(s))$ to $E_1(t_1(s))$ does not change the structure of the latter.

Therefore, implementing the change of variables, the above argument gives
\bs\label{step2-change intro}
I(\la)= & e^{\CO(d^{L+1}/\la^L)}\Big(\frac{\la}{2\pi}\Big)^{d/2}\int_{\us_2} \exp\big(-\la E_2(t)\big)\dd t,\\
E_2(t):=&\e t+\e t^2+\tfrac12\|t\|^2+\e^2\sum_{k=3}^{2L-3}t^k,\quad
\us_2:=T_1^{-1}(\us_1).
\es 
We can now repeat the procedure, each time increasing the power of $\e$ in front of $\sum_{k\geq3}t^k$. Finally, we arrive at $E_L(t)=\e t+\e t^2+\frac12\|t\|^2$ in the exponent. At this point, all terms in the sum with respect to $k$ have been absorbed in the multiplicative remainder $e^{\CO(d^{L+1}/\la^L)}$. Recalling that $\e t+\e t^2$ is a mnemonic for $\e a^\top t+ \e t^\top Bt$, where the norms of $a$ and $B$ are bounded, we conclude that
\bs\label{ELintro}
I(\la)=& e^{\CO(d^{L+1}/\la^L)}\Big(\frac{\la}{2\pi}\Big)^{d/2}\int_{\us_L} \exp\left(-\la\left[\e a^\top t+ \e t^\top Bt+\tfrac12\|t\|^2\right]\right),\\
\us_L:=&T_{L-1}^{-1}\circ \dots \circ T_1^{-1}(\us_1).
\es


\noindent
\textbf{Step 3: complete the square.} We incur a negligible error by extending the integral over $\us_L$ in~\eqref{ELintro} to $\br^d$. But the integral over $\br^d$ is now \emph{exactly computable}, since it becomes a Gaussian integral upon completing the square. We obtain precisely that
$$I_{\mathrm{in}}(\la)=e^{\CO(d^{L+1}/\la^L)}\exp\left(\tfrac12\la\e^2a^\top (I_d+2\e B)^{-1}a-\tfrac12\log\det (I_d+2\e B)\right).$$ Finally, the second exponential factor above gives the terms in~\eqref{main res intro}. To show this, we prove that $a$ can be written as $a=a_0+\e a_1+\e^2a_2+\dots$, and $B$ as $B=B_0+\e B_1+\e^2B_2+\dots$, where the $a_k$ and $B_k$ do not depend on $\e$, and have bounded norms. Substituting this into the exponent in parentheses leads to $\exp(\la\sum_{k\geq2}c_k\e^k +d\sum_{k\geq1}c_k'\e^k)$, where the first sum stems from $\tfrac12\la\e^2a^\top (I_d+2\e B)^{-1}a$ and the second sum stems from $-\tfrac12\log\det (I_d+2\e B)$. Combining the two sums gives the right orders of magnitude $d^2/\la+d^3/\la^2+\dots$ for the terms in~\eqref{main res intro}.

It is worth noting that our proof avoids the heavy Gaussian concentration machinery for Lipschitz functions (e.g., via log-Sobolev/Herbst-type arguments), in contrast to the strongest result to date \cite{katsevich2025a}.

\noindent
\textbf{Step 4: computing the coefficients.}
In Step 3 above, we have quantified the orders of magnitude of the expansion coefficients. Next, we show that these coefficients can be expressed in terms of cumulants. To do so, it suffices to consider $f,d$ fixed, view $\log I(\la)$ purely as a function of $t=\la^{-1/2}$, and take the derivatives of this function with respect to $t$. As a first step towards this goal, we write $I(\la)$ in the following form:
\be I(\la)=e^{\CO(t^{2L})}\E\left[\exp\left(tf_3(Z)+t^2f_4(Z)+\dots+t^{2L-1}f_{2L+1}(Z)\right)\ind\{\|Z\|\leq C\log(1/t)\}\right],\ee where $Z\sim\mathcal N(0, I_d)$ and $f_k(x)=-\frac{1}{k!}\nabla^kf(0)[x^{\otimes k}]$. Using this representation, we expand $\log I(\la)$ in powers of $t$. We show that the coefficients up to order $2L$ in this expansion coincide with the coefficients in the formal expansion of
$\log \E[\exp(\sum_{k=1}^{2L-1}t^kf_{k+2}(Z))]$ in powers of $t$. This latter expansion is formal since the expectation may not be finite. The reason the coefficients coincide is that the discrepancy between the coefficients of the two expansions is controlled by $P(\|Z||\geq C\log(1/t))\sim\exp(-\log^2(1/t))=o(t^M)$ for any $M$. Finally, we observe that the coefficients of the latter formal expansion can be expressed in terms of \emph{joint cumulants} of $f_k(Z)$, $k=3,\dots,2L+1$.

\subsection{Roadmap}\label{sec:roadmap}
In this section, we make the above proof outline precise, 
by breaking up the proof of Theorem~\ref{thm:main res} into three lemmas. These lemmas are stated here and proved in the subsequent sections, as indicated below the statement of each lemma.
\begin{definition}[Local and tail integrals]\label{def:loc}
Define 
the following local and tail integrals.
\begin{align}
I_{\mathrm{in}}(\la)&=\left(\frac{\la}{2\pi}\right)^{d/2}\int_{\|x\|\leq R\sqrt\e}g(x)e^{-\la f(x)}\dd x,\label{eq:in}\\
I_{\mathrm{out}}(\la)&=\left(\frac{\la}{2\pi}\right)^{d/2}\int_{\|x\|>R\sqrt\e}g(x)e^{-\la f(x)}\dd x.\label{eq:out}
\end{align} 
\end{definition}
We break up the proof of Theorem~\ref{thm:main res} into three key lemmas.
\begin{lemma}\label{lma:in}
Fix any $L\ge1$. Suppose parts~\ref{Ial} and~\ref{Ial g} of Assumption~\ref{ass:fg} hold. There exist large enough $c^-=c^-(\argc{f},\argc{g})>0$ and small enough $c^{+}=c^{+}(\argc{f},\argc{g})>0$ such that if $R$ in~\eqref{eq:in} satisfies 
\be\label{R ineq v2}
R\geq c^-\sqrt{1\vee\frac{\log\la}{d}},\qquad R\sqrt{\e}\leq\min(r_0, c^+),\ee then $I_{\mathrm{in}}(\la)>0$ and
\begin{align}\label{in}
\left|\log I_{\mathrm{in}}(\la)-\sum_{k=1}^{L-1}b_k(f,g,d)\la^{-k}\right| \les \frac{(R^2d)^{L+1}}{\la^L}.
\end{align}
The coefficients $b_k(f,g,d)$ do not explicitly depend on $\la$ and satisfy $|b_k(f,g,d)|\les d^{k+1}$. 
\end{lemma}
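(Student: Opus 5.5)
The plan is to follow Steps 1--3 of the outline, working entirely inside the ball $\{\|x\|\le R\sqrt\e\}$, where the Taylor expansions of Assumption~\ref{ass:fg}\ref{Ial},\ref{Ial g} are available because $R\sqrt\e\le r_0$.

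\textbf{Step 1: the initial polynomial change of variables.} First, write $g=\exp(\log g)$ and fold the Taylor polynomial of $\log g$ into the exponent. The constructed map $X$ depends only on $f$, so $\log g$ is simply carried along and later merged with the log-Jacobian. Next, construct $X(t)=t+\sum_{j=2}^{2L}P_j(t)$ with $P_j$ homogeneous polynomial maps of degree $j$. The $P_j$ are chosen recursively so that $f_{2L+1}(X(t))=\tfrac12\|t\|^2+\CO(\|t\|^{2L+2})$. At each order we match the degree-$(j+1)$ terms by taking $P_j$ parallel to $t$, for instance $P_j(t)=-c_{j}[t^{\otimes j+1}]\,t/\|t\|^2$ adjusted to be polynomial. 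It is cleaner to use $P_j(t)=\Phi_{j}[t^{\otimes j}]$ with tensors solving $t^\top P_j(t)=$ (given form). One can take $P_j(t)=\tfrac{1}{j+1}\nabla q(t)$ for the degree-$(j+1)$ form $q$ to be killed, since then $t^\top \nabla q = (j+1)q$ by Euler's identity. This keeps the operator norms of the tensors bounded by constants depending only on $\argc{f}$. On $\|t\|\le C R\sqrt\e$ with $R\sqrt\e\le c^+$ small, $X$ is a diffeomorphism onto a set sandwiched between balls of radii $(1\pm\CO(R\sqrt\e))R\sqrt\e$. The remainder contributes $\la\,\CO((R^2\e)^{L+1})=\CO((R^2d)^{L+1}/\la^L)$ to the exponent. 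The log-Jacobian $\log\det(I+\sum_j\nabla P_j(t))$ is expanded via $\log\det(I+M)=\sum_m(-1)^{m+1}\tr(M^m)/m$. Each trace of a product of bounded-norm tensor maps is at most $d$ times bounded, so it equals $d\sum_{k\ge1}F_k[t^{\otimes k}]$ plus a remainder of size $d\|t\|^{2L}$, which is negligible. Together with $\log g$, whose contribution $\sum G_k t^k$ is of size at most $\argc{g}\|t\|$ and is dominated by $d(\cdot)$, this gives \eqref{step1-change intro}.

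\textbf{Step 2: the iterative refinement.} Define successive maps $T_m(s)=s+\e^{m}\varphi_m(s)$, again built by the Euler-identity trick applied to the lowest-$\e$-order non-quadratic terms $\e^m F_k[s^{\otimes k}]$, $3\le k\le 2L-1$. I expect each step to raise the $\e$-power of all degree-$\ge3$ terms by one while keeping the tensor norms bounded. The terms with $\e^{a}\|s\|^k$ and $a+k/2\ge L+1$ are discarded, as they are $\CO((R^2d)^{L+1}/\la^L)$ after multiplying by $\la$ on $\|s\|\lesssim R\sqrt\e$. The new log-Jacobian $\la^{-1}\log\det T_m'$ has the form $\e^{m+1}\sum_{k\ge1}s^k$, which preserves the structure. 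The degree-$1$ and $2$ terms are collected into $\e a^\top s+\e s^\top B s$, with $a=\sum\e^i a_i$ and $B=\sum \e^i B_i$, all of bounded norm. Careful bookkeeping of which $(\e$-power, degree$)$ pairs survive is where I expect the main obstacle to be. One has to show that after $L-1$ iterations every cubic-or-higher term is negligible, and that the domains $\us_m$ stay comparable to balls of radius $\asymp R\sqrt\e$. An induction hypothesis on the form of $E_m$ should handle this.

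\textbf{Step 3: completing the square.} Replace $\us_L$ by $\br^d$. The difference is controlled by Gaussian tail bounds $P(\|Z\|\ge cR\sqrt d)\le e^{-c'R^2d}\le \la^{-L-1}$; this is where $R\ge c^-\sqrt{1\vee\log\la/d}$ enters. The complement error is multiplicative only if we compare against the main term, which is $e^{\CO(d^2/\la)}$ and hence bounded below. This also gives $I_{\rm in}>0$. Computing the Gaussian integral yields $\exp(\tfrac12\la\e^2 a^\top(I+2\e B)^{-1}a-\tfrac12\log\det(I+2\e B))$. Expanding in $\e$, using $|a_i^\top M a_j|\lesssim\|a_i\|\|a_j\|$ with $\|a_i\|^2\lesssim d$ (the $a_i$ come from traces, so are $\CO(\sqrt d)$ in norm after the $\e$ scaling; this needs checking) and $|\tr B_i\cdots|\lesssim d$, gives coefficients of $\la^{-k}$ of size $d^{k+1}$. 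Truncation at order $L-1$ leaves remainder $\CO(d^{L+1}/\la^L)$.
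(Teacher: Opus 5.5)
Your route is the paper's. The Euler-identity choice $P_j=-\tfrac1{j+1}\nabla q$ is the map of Lemma~\ref{lma:M}. Your $T_m$ are the maps of Lemmas~\ref{lma:M-m}--\ref{lma:changevar-m}. Step~3 is Lemma~\ref{lma:square} followed by the expansion of Section~\ref{sec:terms}.

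\textbf{The gap is in Step~3,} exactly where $d^{k+1}$ is produced. There you use $\|a_i\|^2\lesssim d$.
\begin{itemize}
\item Under that bound, the term $\tfrac12\la\e^2a^\top(I+2\e B)^{-1}a$ contributes up to $\la\e^{2+n}d=d^{n+3}/\la^{n+1}$ at order $\la^{-(n+1)}$. That gives $b_k=\CO(d^{k+2})$ and a truncation remainder $\CO(d^{L+2}/\la^L)$. For $L=1$ you would get $d^3/\la$ instead of $d^2/\la$.
\item It also breaks your tail estimate. In standardized units the Gaussian center sits at distance $\sqrt\la\,\e\|a\|\sim d\sqrt\e$, which is not $\ll R\sqrt d$ once $d^2/\la\gtrsim R^2$.
\end{itemize}
So your closing claim (coefficients of size $d^{k+1}$) does not follow from what you wrote.

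What is true, and what you asserted in Step~2, is $\|a_i\|\lesssim 1$ uniformly in $d$. Your $a$ and $B$ are the paper's $J_1=\ef{0\to1}$ and $J_2=\ef{0\to2}$ from Definition~\ref{def:aB}. They are bounded for the following reasons.
\begin{itemize}
\item The log-Jacobian contributes $\la^{-1}\tr M(t)=\e\cdot d^{-1}\tr M(t)$, where $M$ is a matrix-valued form of bounded operator norm, and $d^{-1}|\tr M|\le\|M\|_{\op}$ (Lemma~\ref{lma:op}(4)).
\item The $\log g$ terms contribute $\la^{-1}\times\text{bounded}=\e\times(\text{bounded}/d)$.
\item Each later substitution only multiplies by powers of $\e$ and bounded tensors (Lemma~\ref{lma:op}, Corollary~\ref{corr:pq}).
\end{itemize}
With this, $\la\e^{2+n}|a_i^\top Ma_j|\lesssim d\e^{n+1}$ and $|\tr(B_{i_1}\cdots B_{i_r})|\lesssim d$. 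Every contribution at order $\la^{-k}$ is then $\CO(d\e^k)=\CO(d^{k+1}/\la^k)$, with remainder $\CO(d\e^L)$.

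\textbf{A smaller slip.} The main term $\exp\bigl(\tfrac\la2\e^2a^\top(I+2\e B)^{-1}a-\tfrac12\log\det(I+2\e B)\bigr)$ is $e^{\pm\CO(d^2/\la)}$. It is \emph{not} bounded below in the regime $d^2/\la\to\infty$ that this lemma targets. You do not need that fact: the ratio of the integral over $\us_L$ to the full Gaussian integral is exactly a Gaussian probability. That probability lies in $[1-\la^{-L},1]$ as in \eqref{PZlb}, so the truncation is multiplicative for free.

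\textbf{On the Step~2 bookkeeping} you defer: the paper kills one degree at a time with $p(x)=-\e^m\ef{(M-1)\to1}[x^{\otimes M-1}]$. Every new term is then either at order $\e^{m+1}$, or of degree $\ge 2M-2\ge M+1$. The order-$\e^{m+1}$ terms are the cross terms with $\e\ef{1},\e\ef{2}$ and $\tfrac12\e^{2m}\|p\|^2$. So your induction goes through.
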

Lemma~\ref{lma:in} is proved in Section~\ref{sec:in}. Let $\alpha=(\alpha_1,\dots,\alpha_M)$ be a multiindex with $\alpha_j\geq 0$ for all $j$. Let $|\alpha|=\alpha_1+\dots+\alpha_M$, and $\pa_u^\alpha=\pa_{u_1}^{\alpha_1}\dots\pa_{u_M}^{\alpha_M}$.
\begin{lemma}\label{prop:ad}
The coefficients $b_k(f,g,d)$, $k=1,\dots, L-1$ from Lemma~\ref{lma:in} do not explicitly depend on $d$. They are given as follows. Let $M\leq 2L-2$ be even and $Z\sim\mathcal N(0, I_d)$. If $d\geq2L$, then
\be
b_{\frac M2}(f,g,d)=b_{\frac M2}(f,g)=\sum_{\substack{\alpha_1,\dots,\alpha_M\geq0\\\sum_{i=1}^Mi\alpha_i=M}}\frac{\cum(p_\alpha(Z))}{\prod_{i=1}^M\alpha_i!(i!)^{\alpha_i}}.\ee Here, $\cum(p_\alpha(Z))$ is defined at the beginning of Section~\ref{sec:cum}. Furthermore, $b_{M/2}(f,g)$ depends on derivatives of $f$ of order $3,\dots,M+2$ and derivatives of $\log g$ of order $1,\dots, M$.
\end{lemma}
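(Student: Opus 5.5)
The plan follows Step 4 of the outline. Fix $f,g,d$ and regard $\log I_{\mathrm{in}}(\la)$ as a function of $t=\la^{-1/2}$. Rescaling $x=tz$ in \eqref{eq:in} gives
\[
I_{\mathrm{in}}(\la)=\E\Big[g(tZ)e^{-\la(f(tZ)-\|tZ\|^2/2)}\ind\{\|Z\|\le R\sqrt d\}\Big],\qquad Z\sim\mathcal N(0,I_d).
\]
Substituting the Taylor expansions \eqref{eq:Taylor-f} and \eqref{eq:Taylor-g} writes the exponent as $\sum_{k=1}^{2L-1}t^k p_k(Z)+\rho(t,Z)$. Here $p_k(z)=-\frac{1}{(k+2)!}\nabla^{k+2}f(0)[z^{\otimes (k+2)}]+\frac{1}{k!}\nabla^k\log g(0)[z^{\otimes k}]$, and $|\rho|\les t^{2L}(\|Z\|^{2L+2}+\|Z\|^{2L})$ on the truncation region. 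With $R\sqrt d$ replaced by a truncation of order $C\log(1/t)$ (permissible for fixed $d$ as $t\to0$, with a tail cost of $e^{-c\log^2(1/t)}=o(t^M)$ for every $M$), this gives
\[
\log I_{\mathrm{in}}(\la)=\log\E\Big[\exp\Big(\textstyle\sum_{k=1}^{2L-1}t^kp_k(Z)\Big)\ind\{\|Z\|\le C\log(1/t)\}\Big]+\CO_{f,g,d}(t^{2L}\mathrm{polylog}(1/t)).
\]
Lemma~\ref{lma:in} already gives $\log I_{\mathrm{in}}=\sum_{k<L}b_k(f,g,d)t^{2k}+\CO_d(t^{2L}\mathrm{polylog})$, with $b_k$ independent of $\la$. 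By uniqueness of asymptotic expansions in $t$, $b_{M/2}(f,g,d)$ is therefore the coefficient of $t^M$ in any valid expansion of $\log I_{\mathrm{in}}$ to order $t^{2L-2}$. Since both expansions are checked for fixed $d$ as $t\to0$, the constants may depend on $d$.

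Next, expand the truncated expectation. On $\{\|Z\|\le C\log(1/t)\}$ the exponent $S_t=\sum_k t^kp_k(Z)$ is $\CO(t\,\mathrm{polylog})$, so Taylor expanding $e^{S_t}$ to order $2L-1$ has a uniformly controlled remainder. The result is $m(t)=\sum_{j\le 2L-2}\mu_jt^j+o(t^{2L-2})$, where $\mu_j$ is the coefficient of $t^j$ in $\sum_n\E[S_t^n]/n!$ with the indicator dropped. Dropping the indicator costs only $o(t^M)$ for all $M$, by Gaussian tails and the polynomial growth of the $p_k$. Thus $\mu_j$ are exactly the formal moment-expansion coefficients. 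Taking $\log m(t)$ and using the moment–cumulant relation (the formal $\log\E e^{S}$ equals $\sum_n\cum_n(S)/n!$, expanded multilinearly), the coefficient of $t^M$ becomes a sum over multiindices $\alpha$ with $\sum_i i\alpha_i=M$. Each term is the joint cumulant of $\alpha_i$ copies of $p_i(Z)$, weighted by $1/\prod_i\alpha_i!$ from multinomial counting. To reproduce the stated normalization $\prod\alpha_i!(i!)^{\alpha_i}$, I would rescale to the paper's convention, presumably with $p_\alpha$ collecting $i!\,p_i$, which is the derivative form $\pa_t^i$ at $t=0$. Equivalently, $b_{M/2}$ equals $\frac{1}{M!}\pa_t^M\log m(0)$, and Faà di Bruno's formula yields exactly this sum. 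The odd coefficients vanish by the symmetry $Z\mapsto -Z$, $t\mapsto -t$, which keeps the expansion in powers of $\la^{-1}$ consistent.

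Independence from $d$ is read off the formula: each $p_i(Z)$ is a polynomial of degree at most $i+2\le M+2$ built only from $\nabla^{\ell}f(0)$ with $\ell\le M+2$ and $\nabla^\ell\log g(0)$ with $\ell\le M$. The only remaining issue is the meaning of a "tensor in dimension $d$". The hypothesis $d\ge 2L$ guarantees that $Z$ has enough coordinates that no degenerate identities enter, so the Wick contractions give a formula uniform in $d$ as an expression in the tensors. For the dependence claim, note that in $\alpha$ with $\sum i\alpha_i=M$ the largest index is $i\le M$.

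The main obstacle is the first step: rigorously matching the truncated expectation with the formal expansion, uniformly enough to invoke uniqueness. Concretely, this means checking that the Lemma~\ref{lma:in} remainder is $o(t^{2L-2})$ for fixed $d$ when $R$ is chosen $t$-dependent within \eqref{R ineq v2} (with $R\sim\sqrt{\log\la}$ allowed, giving only polylog losses), and that changing the truncation radius from $R\sqrt d$ to $C\log(1/t)$ is harmless.
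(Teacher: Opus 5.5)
Your proposal is correct and follows the paper's own argument. You fix $f,g,d$, set $t=\la^{-1/2}$ with a $\log(1/t)$-scale truncation, replace the exponent by its Taylor polynomial $\sum_k t^k p_k(Z)$ at $o(t^M)$ cost, and read off $b_{M/2}$ as the $t^M$ coefficient by uniqueness of asymptotic expansions. The only difference is cosmetic. You remove the truncation at the level of moments and then take a formal logarithm, whereas the paper normalizes to the truncated Gaussian $X(t)$, expands its finite cumulant generating function in $u_k=t^k/k!$, and compares the cumulants of $X(t)$ and $Z$ in Lemma~\ref{lma:XZ}.

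One small point: the hypothesis $d\ge 2L$ is not used in this identification, which works for every fixed $d$. Your ``degenerate identities'' justification is therefore unnecessary. Independence from $d$ is simply read off the formula, as you also note.
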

Lemma~\ref{prop:ad} is proved in Section~\ref{sec:cum}. Although we have used somewhat different notation, the above formula can be shown to coincide with that given in~\cite[Chapter 3.10.2]{mccullagh2018}. 

\begin{remark}\label{rk:Zpower}
The above formula for $b_{M/2}(f,g)$ is the coefficient in front of $t^M$ in the formal power series expansion of $\log\E\left[\exp(h(t,Z))\right]$ in powers of $t$, where $h(t,Z)=\log g(tz) -\left(t^{-2}f(tz)-\|z\|^2/2\right)$ and $Z\sim\mathcal N(0, I_d)$.
\end{remark}

\begin{lemma}\label{lma:out}
Under the assumptions of Theorem~\ref{thm:main res}, 
\be
|I_{\mathrm{out}}(\la)|/I_{\mathrm{in}}(\la) \leq 5\frac{R^{2L}d^{L+1}}{\la^L}.\label{out}
\ee 
\end{lemma}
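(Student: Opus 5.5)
We bound $I_{\mathrm{in}}(\la)$ from below and $|I_{\mathrm{out}}(\la)|$ from above separately, and then take the ratio.

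\textbf{Lower bound on $I_{\mathrm{in}}$.} By Lemma~\ref{lma:in}, $\log I_{\mathrm{in}}(\la)=\sum_{k=1}^{L-1}b_k\la^{-k}+\CO((R^2d)^{L+1}/\la^L)$ with $|b_k|\les d^{k+1}$. Under~\eqref{R ineq v1} we have $d^{k+1}/\la^k=d(d/\la)^k$, which is not small, so Lemma~\ref{lma:in} by itself gives only $I_{\mathrm{in}}\ge e^{-Cd\e}$. A more robust route is a direct lower bound using parts~\ref{Ial}--\ref{Ial g} of Assumption~\ref{ass:fg} on the ball $\|x\|\le R\sqrt\e\le r_0$. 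There we have $f(x)\le \tfrac12\|x\|^2+c\|x\|^3$ and $\log g(x)\ge -c\|x\|$. Substituting $x=z/\sqrt\la$ gives
\[
I_{\mathrm{in}}\ge \E\big[e^{-c\|Z\|^3/\sqrt\la - c\|Z\|/\sqrt\la}\ind\{\|Z\|\le R\sqrt d\}\big],
\]
and this is at least $\tfrac12 e^{-C d^{3/2}/\sqrt\la}$. That is still not sharp. The cleanest choice is therefore to keep the exact form of $I_{\mathrm{in}}$ and express the bound on $I_{\mathrm{out}}$ relative to a Gaussian-type quantity which is also a lower bound for $I_{\mathrm{in}}$. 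Concretely, I would reuse the change of variables from the proof of Lemma~\ref{lma:in}, which represents $I_{\mathrm{in}}$ as $e^{\CO(\cdot)}$ times a Gaussian integral with exponent $\la E_L(t)$. This yields $I_{\mathrm{in}}\ge \exp(-C\la\e^2)=\exp(-Cd\e)$ in the worst case. I accept this loss, since the outer integral will carry a factor $e^{-c\kappa R\, d}$ which dominates it.

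\textbf{Upper bound on $|I_{\mathrm{out}}|$.} Using~\eqref{eq:coercive gf}, $|I_{\mathrm{out}}|\le e^{\argc{g}}(\la/2\pi)^{d/2}\int_{\|x\|>R\sqrt\e}e^{-\la\kappa m(x)}\dd x$, where $m$ is the minimum in~\eqref{eq:coercive gf}. I split $\{\|x\|>R\sqrt\e\}$ into the three regions on which each branch of $m$ is active.
\begin{itemize}
\item[(a)] The quadratic region, $R\sqrt\e<\|x\|\le 2\sqrt\e$ (the quadratic branch is active only while $\|x\|\le 2\sqrt\e$). Here the integral is $\P(\|Z\|>R\sqrt{\kappa d})\le e^{-\kappa R^2 d/4}$ by the chi-square tail, once $R\sqrt\kappa\ge 2$. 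This region is empty once $R\ge2$.
\item[(b)] The linear region, $\sqrt\la\,\sqrt\e\|x\|=\sqrt{d}\sqrt{\la}\|x\|$. In polar coordinates the integral is
\[
\frac{\la^{d/2}}{2^{d/2}\Gamma(d/2)}\int_{R\sqrt\e}^\infty r^{d-1}e^{-\kappa\sqrt{d\la}\,r}\dd r .
\]
With $u=\sqrt\la\, r$ this becomes a Gamma tail $\int_{R\sqrt d}^\infty u^{d-1}e^{-\kappa\sqrt d\,u}\dd u$. Its integrand decays like $e^{-\kappa R d+d\log(R\sqrt d)}$, and after dividing by $\Gamma(d/2)2^{d/2}$ it is at most $\exp(-d[\kappa R-\log R-\log(1/\kappa)-C])$.
\item[(c)] The logarithmic region, $e^{-\la\kappa d^{-1/(2L)}\log(1+\|x\|)}=(1+\|x\|)^{-\la\kappa d^{-1/(2L)}}$. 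Its exponent $\gg d$ because $d^{1+1/(2L)}\ll\la$, which follows from $d^{L+1}\les\la^L$. The radial integral converges, and its size is bounded by $\la^{d/2}$ times the value at the threshold radius, which is again $\le e^{-c\kappa R d}$ after accounting for the $\frac{d}{2}\log\la$ factor. This is exactly where the $\log\la/d$ term in~\eqref{R ineq v1} is used.
\end{itemize}

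\textbf{Combining and main obstacle.} Combining the three regions with the lower bound on $I_{\mathrm{in}}$, the ratio is at most $\exp(-d[c\kappa R-\log R-\log\tfrac1\kappa-\tfrac{\log\la}{d}-C\e])$. The first condition in~\eqref{R ineq v1} makes the bracket at least $\kappa R/2$, so the ratio is at most $e^{-\kappa Rd/2}$. It remains to show $e^{-\kappa Rd/2}\le R^{2L}d^{L+1}/\la^L$, that is, $\kappa Rd/2\ge L\log(\la/d)$. This follows again from $R\ge c^-\log\la/(\kappa d)$ with $c^-$ large. I expect the main obstacle to be the careful bookkeeping of the $\Gamma(d/2)$ and $\la^{d/2}$ prefactors in region (c), where only logarithmic decay is available. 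I would handle it by first bounding $\int_{\|x\|>\rho}(1+\|x\|)^{-p}\dd x\le C_d\rho^{d-p}/(p-d)$ with $p=\la\kappa d^{-1/(2L)}\ge 2d$, and then inserting the constraints.
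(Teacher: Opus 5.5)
Your architecture is workable, and several pieces are fine. The lower bound $I_{\mathrm{in}}(\la)\ge e^{-Cd\e-C}$ via Lemma~\ref{lma:in} is valid, though heavier than needed. The direct bound you dismissed as ``not sharp'' is essentially what the paper uses, namely $I_{\mathrm{in}}\ge\frac12\mu^{-d/2}$ with $\mu=1+\argc{f}R\sqrt\e\le e/2$, because any $e^{-O(d)}$ loss is affordable. Your linear-branch Gamma-tail estimate in (b) matches the paper's $T_2$. The final conversion $\kappa Rd\ge 2L\log\la\Rightarrow e^{-\kappa Rd/2}\le R^{2L}d^{L+1}/\la^L$ is also correct.

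The gap is in (c), the step you yourself call the main obstacle. You claim that the $\frac d2\log\la$ coming from $(\la/2\pi)^{d/2}$ is paid for by the $\frac{\log\la}{d}$ term of \eqref{R ineq v1}. That cannot work: the term only gives $\kappa Rd\ge c^-\log\la$, i.e.\ a fixed power $\la^{-c^-}$, never $\la^{-d/2}$. Your final bracket containing $-\frac{\log\la}{d}$ reflects this confusion. An uncancelled $\la^{d/2}$ would put $-\frac12\log\la$ there, and nothing in \eqref{R ineq v1} controls that when $R=O(1)$.

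You also never identify the ``threshold radius.'' Suppose it is the inner radius $\rho=R\sqrt\e$, the only radius you define. Then your estimate $C_d\rho^{d-p}/(p-d)$ is useless, since $\rho<1$. Even the corrected $(1+\rho)^{d-p}/(p-d)$ gives roughly $\exp\big(\frac d2\log\frac{e\la}{d}-\kappa Rd/\eta\big)$ with $\eta=\sqrt{d^{1+1/L}/\la}$. In the admissible regime $d^{L+1}\asymp\la^L$, $d\gtrsim\log\la$, $R=O(1)$, we have $\kappa R/\eta=O(1)$ while $\log(\la/d)\to\infty$, so this bound diverges.

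There are two ways to close the gap.

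(i) The paper's route. Integrate the log branch over all of $\br^d$ exactly: this gives $(\la/2)^{d/2}\frac{2\Gamma(d)}{\Gamma(d/2)}\frac{\Gamma(a-d)}{\Gamma(a)}$ with $a=\kappa\la d^{-1/(2L)}\ge 2d$. Here $\la^{d/2}$ is cancelled by $a^{-d}$, leaving $2\big(\frac4e\frac{d^{1+1/L}}{\kappa^2\la}\big)^{d/2}$. This is only polynomially small, not $e^{-c\kappa Rd}$. It is turned into $R^{2L}d^{L+1}/\la^L$ using $2/\kappa^2\le R^2$, $R^2d^{1+1/L}/\la\le 1$ and $d/2\ge L$. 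That is exactly why the hypothesis $d\ge2L$ and the factor $d^{-1/(2L)}$ in \eqref{eq:coercive gf} appear, and why the lemma's bound is polynomial rather than exponential.

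(ii) Your active-region idea, done properly. The log branch is the minimum exactly on $\|x\|\ge\rho_*$, where $\log(1+\rho_*)=\eta\rho_*$. Since $\eta^{2L}=d^{L+1}/\la^L\le c^+$, $\rho_*$ is large, and $(1+\rho_*)^{-a}=e^{-\kappa\sqrt{d\la}\,\rho_*}$. This overwhelms $(e\la/d)^{d/2}$ because $\kappa\sqrt{\la/d}\gg\log(\la/d)$ once $\e$ is small relative to $\kappa$, which \eqref{R ineq v1} forces. To make this work you must actually locate $\rho_*$ and use $(1+\rho_*)^{d-a}$ rather than $\rho_*^{d-p}$. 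The $\frac{\log\la}{d}$ condition plays no role in this step; it is needed only in your final conversion.
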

Lemma~\ref{lma:out} is proved in Section~\ref{sec:out}. The three lemmas conclude the proof of Theorem~\ref{thm:main res}. Indeed, they imply $I(\la)>0$ when $R^{2L}d^{L+1}/\la^L$ is sufficiently small, give the desired form for the coefficients, and imply
\be
\bigg|\log I(\la)-\sum_{k=1}^{L-1}b_k(f,g)\la^{-k}\bigg|\leq \bigg|\log I_{\mathrm{in}}(\la)-\sum_{k=1}^{L-1}b_k(f,g)\la^{-k}\bigg| + \left|\log\left(1+\frac{I_{\mathrm{out}}(\la)}{I_{\mathrm{in}}(\la)}\right)\right|.
\ee We now use~\eqref{in},~\eqref{out} to conclude~\eqref{main res}.

\section{Proof of Lemma~\ref{lma:in}}\label{sec:in}
Using Assumption~\ref{ass:fg}, part~\ref{Ial}, and using that $R\sqrt\e\leq r_0$, we have
\be
\max_{\|x\|\leq R\sqrt\e}\la|\mathcal{R}_{2L+2}(x)| \les \la(R^2\e)^{L+1}=R^{2L+2}d\e^L. 
\ee
Therefore, recalling~\eqref{eq:Taylor-f}, we have
\be\label{I-in-1}
I_{\mathrm{in}}(\la)=e^{\CO(R^{2L+2}d\e^L)}\left(\frac{\la}{2\pi}\right)^{d/2}\int_{\|x\|\leq R\sqrt\e}g(x)e^{-\la f_{2L+1}(x)}\dd x.
\ee
The integrand must be positive for~\eqref{I-in-1} to hold. But indeed, $g(0)=1$ and $\log g$ has bounded first derivative, both by~\ref{Ial g} of Assumption~\ref{ass:fg}. Thus we can ensure $g(x)>0$ for all $\|x\|\leq R\sqrt\e$ by choosing the upper bound $c^+$ on $R\sqrt\e$ small enough in~\eqref{R ineq v2}. This proves our claim in Lemma~\ref{lma:in} that $I_{\mathrm{in}}(\la)>0$.

\subsection{Iterative change of variables}\label{subsec:change}
Throughout this section, we assume all the conditions of Lemma~\ref{lma:in} hold true without explicitly saying so. Before stating a key change of variables, we introduce the following two definitions.

\begin{definition}\label{def:base comp}
A \emph{base} tensor $G_{k\to j}$ is a symmetric multilinear map as in Definition~\ref{def:Akj}, which \emph{only depends on $d$, $\nabla^\ell f(0)$, $\ell=3,\dots,2L+1$, and $\nabla^\ell(\log g)(0)$, $\ell=1,\dots,2L-1$,} but does not explicitly depend on $\la$, and satisfies $\|G_k\|_\op< c(\argc{f},\argc{g})$. Here $f$ and $g$ are the same as in \eqref{lap}. A \emph{composite} tensor $\ef{k\to j}$, denoted specifically by the letter $F$, is any tensor which can be written as
$$\ef{k\to j} = \sum_{\ell\geq0} \e^\ell G_{k\to j}^{(\ell)},$$ for base tensors $G_{k\to j}^{(\ell)}$. \end{definition}
Any time we write $\ef{k\to j}$ (or $\ef{k}$ when $j=0$) we mean a composite tensor according to this definition.

\begin{definition}\label{def:O}
We say $f(x)=\CO(\|x\|^p)$ for all $x\in\us\subset\{\|x\|\le 1\}$ if there exists $c$ depending only on $\argc{f},\argc{g}$ but independent of $d$ and $\la$, such that $|f(x)|\leq c\|x\|^p$ for all $x\in\us$. 
\end{definition} 


\begin{lemma}\label{lma:changevar}
Let $f_{2L+1}$ be as in~\eqref{eq:Taylor-f}. There exists $X(t)=t+\varphi(t)$ with $\varphi(t)=\sum_{q= 2}^{2L}\ef{q\to1}[t^{\otimes q}]$ such that $f_{2L+1}(X(t))=\frac12\|t\|^2 + \CO(\|t\|^{2L+2})$ for all $\|t\|\leq 2R\sqrt\e$. The function $\varphi$ is explicitly computable from the derivatives of $f$ of order $3,\dots,2L+1$.
\end{lemma}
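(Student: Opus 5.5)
We construct $\varphi$ degree by degree, as a formal power series in $t$ truncated at order $2L$. Write $F_k:=\frac1{k!}\nabla^kf(0)$ for $3\le k\le 2L+1$, so $f_{2L+1}(x)=\frac12\|x\|^2+\sum_{k=3}^{2L+1}F_k[x^{\otimes k}]$. Make the ansatz $\varphi(t)=\sum_{q=2}^{2L}\varphi_q(t)$, where $\varphi_q(t)=\ef{q\to1}[t^{\otimes q}]$ is homogeneous of degree $q$. Substitute $X=t+\varphi(t)$ and collect terms of each homogeneous degree $m=3,\dots,2L+1$. The quadratic part contributes
\[
\tfrac12\|t+\varphi\|^2=\tfrac12\|t\|^2+t^\top\varphi+\tfrac12\|\varphi\|^2 .
\]
The degree-$m$ piece of $t^\top\varphi$ is $t^\top\varphi_{m-1}(t)$. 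Every other degree-$m$ contribution, whether from $\frac12\|\varphi\|^2$ or from $F_k[(t+\varphi)^{\otimes k}]$, involves only $\varphi_q$ with $q\le m-2$. So the degree-$m$ coefficient has the form $t^\top\varphi_{m-1}(t)+P_m(t)$, where $P_m$ is a homogeneous degree-$m$ polynomial determined by $\varphi_2,\dots,\varphi_{m-2}$ and the $F_k$.

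The key observation is that any homogeneous scalar polynomial $P_m(t)=A_m[t^{\otimes m}]$, with $A_m$ a symmetric $m$-tensor, can be cancelled by the choice
\[
\varphi_{m-1}(t):=-A_m[t^{\otimes(m-1)},\cdot\,],
\]
that is, the vector-valued $(m-1)$-linear map obtained by leaving one slot of $A_m$ open. Indeed, then $t^\top\varphi_{m-1}(t)=-A_m[t^{\otimes m}]$. Solving recursively for $m=3,\dots,2L+1$ makes $f_{2L+1}(X(t))=\frac12\|t\|^2+(\text{terms of degree }\ge 2L+2)$. Each $\varphi_{m-1}$ is then an explicit polynomial expression in $\nabla^3f(0),\dots,\nabla^mf(0)$, which gives the claimed explicit computability.

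Two properties must be checked inductively.

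\emph{Tensor bounds.} Symmetrizing a tensor does not increase its operator norm, in the sense of the bound $\sup_{\|u\|=1}|A[u^{\otimes m}]|$. The open-slot map has vector norm $\sup_{\|u\|=1}\|A_m[u^{\otimes(m-1)},\cdot]\|$, and this is controlled by the operator norm of the symmetric tensor $A_m$ through Banach's theorem on symmetric multilinear forms (polarization constant $1$ for Hilbert spaces). Products and compositions of bounded tensors stay bounded with constants depending on $L$ and $\argc{f}$. It follows that each $\ef{q\to1}$ is a base tensor; in fact it is independent of $\e$, which is trivially composite.

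\emph{Remainder.} Collect the terms of degree $2L+2$ through $(2L+1)\cdot 2L$ in $f_{2L+1}(X(t))$. Each is bounded by $c\|t\|^{p}$ with $p\ge 2L+2$. Since $\|t\|\le 2R\sqrt\e\le 2c^+\le1$, we have $\|t\|^p\le\|t\|^{2L+2}$, and finitely many such terms give $\CO(\|t\|^{2L+2})$ in the sense of Definition~\ref{def:O}.

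The step I expect to need the most care is this norm bookkeeping. One must make sure the $P_m$ are expressed through tensors whose operator norms are controlled independently of $d$. This requires norm bounds for non-symmetric compositions such as $F_k[t,\dots,\varphi_q(t),\dots]$, which are handled by evaluating on the diagonal and using $\|\varphi_q(t)\|\le c\|t\|^q$. It also requires that passing to the symmetrization and then to the open-slot vector map loses at most an $L$-dependent constant.
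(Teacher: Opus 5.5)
Your proposal is correct. It rests on the same cancellation identity as the paper: you open one slot of the degree-$m$ tensor so that $t^\top\varphi_{m-1}(t)=-A_m[t^{\otimes m}]$. Where you differ is in how the change of variables is assembled.

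The paper proceeds in three steps:
\begin{enumerate}
\item It proves Lemma~\ref{lma:M}, which removes a single degree by one near-identity substitution.
\item It composes the resulting maps $x_3\circ\cdots\circ x_{2L+1}$ and uses Lemma~\ref{lma:circ} to write the composition as $t+\sum_{k\ge2}\ef{k\to1}[t^{\otimes k}]$.
\item It truncates at degree $2L$, which needs a separate $\CO(\|t\|^{2L+2})$ estimate for $f_{2L+1}(h(t))-f_{2L+1}(X(t))$.
\end{enumerate}

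You instead make a single polynomial ansatz and use that the degree-$m$ equation is triangular in the unknowns. The new unknown $\varphi_{m-1}$ enters only through $t^\top\varphi_{m-1}(t)$, and every other contribution involves $\varphi_q$ with $q\le m-2$. So $X$ is a polynomial of degree $2L$ from the outset, and you need neither the composition lemma nor the truncation step. Your appeal to Banach's theorem for the open-slot map also makes explicit the $d$-independent boundedness that the paper calls straightforward in Lemma~\ref{lma:M}.

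What the paper's compositional route buys is one template, built on the composite-tensor calculus of Lemma~\ref{lma:op}. The paper reuses that template with $\e$-weights in Lemmas~\ref{lma:M-m} and~\ref{lma:changevar-m}. There the goal is to raise the power of $\e$ in front of the cubic and higher terms, not to eliminate them, and the one-degree-at-a-time substitution reduces that to a short case check.
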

See Appendix~\ref{app:change} for the proof and construction of $\varphi$. As an example, when $L=1$, we take $X(t)=t-\frac16\nabla^3f(0)[t^{\otimes2}]$, where $\nabla^3f(0)[t^{\otimes2}]$ is the vector such that $\nabla^3f(0)[t^{\otimes2}]^\top u=\nabla^3f(0)[t,t,u]$ for all $u\in\br^d$. It is then straightforward to check that 
\bs
f_3\left(t-\tfrac16\nabla^3f(0)[t^{\otimes2}]\right)&=\tfrac12\left\|t-\tfrac16\nabla^3f(0)[t^{\otimes2}]\right\|^2+\tfrac16\nabla^3f(0)\left[\left(t-\tfrac16\nabla^3f(0)[t^{\otimes2}]\right)^{\otimes3}\right]\\
&=\|t\|^2/2+ \CO(\|t\|^4).
\es The basic observation that substituting $x=t-F[t^{\otimes k-1}]$ into $\|x\|^2/2+F[x^{\otimes k}]$ kills the order $k$ polynomial is at the heart of all of our changes of variables.\\

We now show that $X$ is bijective and characterize the set $X^{-1}(\{\|x\|\leq R\sqrt\e\})$. The following lemma states a slightly more general result which will be needed later on.
\begin{lemma}\label{lma:U} 
Let $\varphi(t)=\sum_{q\geq 2}\ef{q\to1}[t^{\otimes q}]$, and $X(t)=t+\varphi(t)$. Fix any absolute constants $C_1,C_2$ such that $0<C_1\leq C_2$. For all $r\leq1/(2C_2)$ small enough that $\|\varphi'(t)\|\leq \frac12$ $\forall\|t\|\leq 2C_2r$, and for any set $\us$ satisfying $\{\|t\|\leq C_1r\}\subseteq\us\subseteq\{\|t\|\leq C_2r\}$, it holds
\begin{enumerate}[label=(\arabic*)]
\item  $\{\|t\|\leq\tfrac23 C_1r\}\subseteq X^{-1}(\us)\subseteq\{\|t\|\leq 2C_2r\}$, and 
\item $X$ is a bijection from $X^{-1}(\us)$ onto $\us$.
\end{enumerate}
\end{lemma}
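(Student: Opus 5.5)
The plan is a standard perturbation-of-identity argument on the ball $B:=\{\|t\|\le 2C_2 r\}$. Since $\|\varphi'(t)\|\le\frac12$ on $B$ and $B$ is convex, the mean value inequality gives, for $t,s\in B$,
\[
\|\varphi(t)-\varphi(s)\|\le \tfrac12\|t-s\| .
\]
Hence
\[
\|X(t)-X(s)\|\ge \|t-s\|-\|\varphi(t)-\varphi(s)\|\ge\tfrac12\|t-s\|,
\]
so $X$ is injective on $B$. Using $\varphi(0)=0$ (every term has $q\ge2$), the same estimate with $s=0$ yields the two-sided bound
\[
\tfrac12\|t\|\le\|X(t)\|\le\tfrac32\|t\|,\qquad t\in B .
\]

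For the inclusions in (1), I interpret $X^{-1}(\us)$ as the preimage within $B$, i.e.\ $\{t\in B: X(t)\in\us\}$. This is the natural reading, since $X$ is a polynomial and may have spurious preimages far away. If $\|t\|\le\frac23C_1r$, then $t\in B$ because $C_1\le C_2$, and $\|X(t)\|\le\frac32\|t\|\le C_1r$, so $X(t)\in\us$. This gives the inner inclusion. The outer inclusion is built into the interpretation. It is also consistent with the lower bound: $X(t)\in\us$ forces $\|X(t)\|\le C_2 r$, hence $\|t\|\le 2\|X(t)\|\le 2C_2r$.

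For (2), injectivity on $X^{-1}(\us)\subseteq B$ is already established, so the main obstacle is surjectivity onto $\us$. Given $y\in\us$, so $\|y\|\le C_2r$, I will solve $t=y-\varphi(t)$ by a fixed-point iteration. The map $\Phi_y(t)=y-\varphi(t)$ is a $\frac12$-contraction on $B$. It maps $B$ into itself because
\[
\|\Phi_y(t)\|\le\|y\|+\tfrac12\|t\|\le C_2r+C_2r=2C_2r .
\]
Banach's fixed point theorem then gives a unique $t\in B$ with $X(t)=y$. Since $X(t)=y\in\us$, this $t$ lies in $X^{-1}(\us)$, so $X$ maps $X^{-1}(\us)$ onto $\us$. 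Together with injectivity, $X$ is a bijection from $X^{-1}(\us)$ onto $\us$.

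The condition $r\le 1/(2C_2)$ guarantees $B\subseteq\{\|t\|\le1\}$. It plays no further role here beyond keeping $B$ inside the unit ball, where the $\CO(\cdot)$ convention of Definition~\ref{def:O} applies.
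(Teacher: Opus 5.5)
Your proposal is correct and follows essentially the same route as the paper: $\varphi(0)=0$ and $\|\varphi'\|\le\frac12$ give $\|X(t)\|\le\frac32\|t\|$ for the inner inclusion, and the contraction $t\mapsto y-\varphi(t)$ on the ball $\{\|t\|\le 2C_2r\}$ gives a unique preimage there of each $y\in\us$. Your explicit reading of $X^{-1}(\us)$ as the preimage within that ball is the convention the paper's proof uses tacitly. Its contraction argument also gives uniqueness only inside the ball, and a polynomial $X$ can have further preimages far away, so making this explicit clarifies the argument rather than departing from it.
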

See Appendix~\ref{app:change} for the proof. Let 
\be\label{usus1}
\us=\{\|x\|\leq R\sqrt\e\},\quad \us_1=X^{-1}(\{\|x\|\leq R\sqrt\e\}). 
\ee
Lemma~\ref{lma:U} with $C_1=C_2=1$,  $r=R\sqrt\e$, and $\us$ as above gives that if $R\sqrt\e$ is small enough then $\{\|t\|\leq \frac23R\sqrt\e\}\subseteq\us_1\subseteq\{\|t\|\leq 2R\sqrt\e\}$. Thus in particular, the conclusion of Lemma~\ref{lma:changevar} holds for all $t\in\us_1$. Combining~\eqref{I-in-1}, Lemma~\ref{lma:changevar}, and the fact that $X(t):X^{-1}(\us)\to\us$ is a bijection by Lemma~\ref{lma:U}, we have
\be\label{initchange}
I_{\mathrm{in}}(\la)=e^{\CO(R^{2L+2}d\e^L)}\left(\frac{\la}{2\pi}\right)^{d/2}\int_{\us_1}\exp\left(-\frac{\la}{2}\|t\|^2+\log g(X(t))+\log \detm(X'(t))\right)\dd t.
\ee
Since $\|\varphi'(t)\|_\op\ll 1$ for $t\in\us_1$, we have $\log\detm X'(t)=\log \detm( I_d+\varphi'(t))=\tr\log(I_d+\varphi'(t))\approx\tr\varphi'(t)\sim d$. This is made precise in the following lemma.


\begin{lemma}\label{lma:logdet exp}
We have
\bs\label{logdet v3}
&\log\det(X^\prime(t))=d\bigg[\sum_{k=1}^{2L-1}\ef{k}[t^{\otimes k}]+\CO\big(\norm{t}^{2L}\big)\bigg],\quad\forall t\in\us_1.
\es
\end{lemma}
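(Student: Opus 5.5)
We write $X'(t)=I_d+\varphi'(t)$, where $\varphi'(t)=\sum_{q=2}^{2L}q\,\ef{q\to1}[t^{\otimes(q-1)},\cdot\,]$. This is a matrix-valued polynomial $\varphi'(t)=\sum_{m=1}^{2L-1}\ef{m\to2}[t^{\otimes m}]$ with composite matrix-valued tensors of bounded operator norm. By the hypothesis of Lemma~\ref{lma:U} (with $C_1=C_2=1$, $r=R\sqrt\e$), on $\us_1\subseteq\{\|t\|\le 2R\sqrt\e\}$ we have $\|\varphi'(t)\|_\op\le\frac12$, and indeed $\|\varphi'(t)\|_\op\les\|t\|$. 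So the matrix logarithm series converges, and
\[
\log\det X'(t)=\tr\log(I_d+\varphi'(t))=\sum_{n=1}^{2L-1}\frac{(-1)^{n+1}}{n}\tr\big(\varphi'(t)^n\big)+\tr\,\mathcal{E}(t).
\]
The remainder matrix satisfies $\|\mathcal{E}(t)\|_\op\les\sum_{n\ge 2L}\|\varphi'(t)\|_\op^n/n\les\|t\|^{2L}$. Using $|\tr A|\le d\|A\|_\op$, we get $|\tr\mathcal{E}(t)|\le d\,\CO(\|t\|^{2L})$. This gives the remainder in \eqref{logdet v3}.

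Next we expand each $\tr(\varphi'(t)^n)$ for $n\le 2L-1$ by multiplying out the polynomial. This gives a sum of terms $\tr\big(\ef{m_1\to2}[t^{\otimes m_1}]\cdots\ef{m_n\to2}[t^{\otimes m_n}]\big)$ of total degree $k=m_1+\dots+m_n$. Terms with $k\ge 2L$ are again bounded by $d\,\|t\|^{k}\les d\|t\|^{2L}$, since $\|t\|\le 1$, and are absorbed into the remainder. For $k\le 2L-1$, each term is a homogeneous degree-$k$ polynomial in $t$. We define the scalar multilinear form $\ef{k}$ as $d^{-1}$ times the symmetrization of
\[
(u_1,\dots,u_k)\mapsto\tr\big(\ef{m_1\to2}[u_1,\dots,u_{m_1}]\cdots\ef{m_n\to2}[\dots,u_k]\big),
\]
summed over the finitely many compositions, with the coefficients $(-1)^{n+1}/n$.

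The main point to check is that this $\ef{k}$ is a genuine composite tensor in the sense of Definition~\ref{def:base comp}. First, the operator norm bound: for a unit vector $u$, we have $|d^{-1}\tr(A_1\cdots A_n)|\le\prod_j\|A_j\|_\op$. Each $\|\ef{m\to2}[u^{\otimes m}]\|_\op$ is bounded by a constant depending on $\argc{f},\argc{g}$. This holds because $\ef{m\to2}$ comes from $\ef{q\to1}$ by differentiation, and the operator norm of the matrix $v\mapsto \ef{q\to1}[u^{\otimes(q-1)},v]$ is controlled by the norm of $\ef{q\to1}$. So the $d^{-1}$ normalization is exactly what makes the norms $d$-independent. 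Second, expanding each $\ef{m\to2}=\sum_\ell\e^\ell G^{(\ell)}$ and multiplying shows that $\ef{k}=\sum_\ell\e^\ell\tilde G^{(\ell)}_k$. Here each $\tilde G^{(\ell)}_k$ is a finite sum of normalized traces of products of base tensors. These depend only on $d$ and the derivatives of $f,\log g$, not explicitly on $\la$, and have bounded norm; dependence on $d$ is permitted for base tensors.

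One subtlety: the symmetrized form's operator norm, defined via \eqref{T norm} as the sup over $u^{\otimes k}$, equals the sup of the polynomial itself. So bounding it on diagonal inputs suffices, and no polarization constant is needed.

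The step I expect to require the most care is the justification that $\ef{m\to2}$ has a bounded $\|\cdot\|_\op$ in the matrix sense. This needs the input norm of the base vector-valued tensors $G_{q\to1}$ from Lemma~\ref{lma:changevar} to control $\sup_{\|u\|=\|v\|=\|w\|=1}w^\top G_{q\to1}[u^{\otimes(q-1)},v]$. I would handle it by noting that the construction in Appendix~\ref{app:change} produces $\ef{q\to1}$ as gradients of symmetric scalar forms, or sums of contractions of the $\nabla^\ell f(0)$. For those, such mixed bounds follow from the standard fact that for symmetric tensors the mixed-argument sup equals the diagonal sup. Failing that, I would include this mixed-norm bound in the meaning of ``bounded'' for base tensors. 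Everything else is routine bookkeeping of finitely many terms with constants depending only on $L,\argc{f},\argc{g}$.
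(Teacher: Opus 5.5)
Your argument is correct and is essentially the paper's: the paper deduces this lemma from Lemma~\ref{lma:logdet} with $m=0$, $N=2L$ and $A(t)=\varphi'(t)$. That lemma's proof is your argument: the truncated $\tr\log$ series, the bound $|\tr A|\le d\|A\|_{\op}$ on the remainder, and the matrix-product and normalized-trace identities of Lemma~\ref{lma:op} to produce the scalar composite tensors $\ef{k}$.

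The point you single out, that $\varphi'(t)=\sum_{k\ge1}\ef{k\to2}[t^{\otimes k}]$ with bounded norms, is used without comment in the paper. It needs no appeal to the specific construction (and the composed pieces from Lemma~\ref{lma:circ} are generally not gradients): every base tensor $G_{q\to1}$ is input-symmetric, so for each fixed unit $w$ the form $w^\top G_{q\to1}[\cdot]$ is symmetric, and its mixed-argument supremum equals its diagonal supremum, which is all that is required.
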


The proof follows from Lemma~\ref{lma:logdet} with $m=0$, and the fact that $R\sqrt\e$ can be made sufficiently small by choosing $c^+$ appropriately in~\eqref{R ineq v2}. Next, we expand $\log g(X(t))$.

\begin{lemma}\label{lma:log-g}It holds
\bs\label{log-g}
&\log g(X(t))=\sum_{k=1}^{2L-1}\ef{k}[t^{\otimes k}]+\CO\big(\norm{t}^{2L}\big),\quad\forall t\in\us_1.
\es
\end{lemma}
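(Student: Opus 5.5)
The plan is to compose the Taylor expansion of $\log g$ from Assumption~\ref{ass:fg}\ref{Ial g} with the polynomial map $X(t)=t+\varphi(t)$ of Lemma~\ref{lma:changevar}, and then regroup by homogeneous degree in $t$.

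First, we check that the expansion~\eqref{eq:Taylor-g} may be applied at $x=X(t)$. By Lemma~\ref{lma:U}, for $t\in\us_1$ we have $X(t)\in\{\|x\|\le R\sqrt\e\}\subseteq\{\|x\|\le r_0\}$. We also have $\us_1\subseteq\{\|t\|\le 2R\sqrt\e\}$ and $R\sqrt\e\le c^+$, so $\|t\|\le1$. Since $\varphi(t)=\sum_{q=2}^{2L}\ef{q\to1}[t^{\otimes q}]$ and each composite tensor has bounded operator norm (because $\e\le R^2\e\le1$, so the series in $\e^\ell$ is a finite sum of bounded base tensors), we get $\|\varphi(t)\|\les\|t\|^2$. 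Hence $\|X(t)\|\les\|t\|$ on $\us_1$. Consequently
\[
\log g(X(t))=\sum_{k=1}^{2L-1}\tfrac1{k!}\nabla^k(\log g)(0)[X(t)^{\otimes k}]+\mathcal R^{(g)}_{2L}(X(t)),
\]
and $|\mathcal R^{(g)}_{2L}(X(t))|\le \argc{g}\|X(t)\|^{2L}=\CO(\|t\|^{2L})$.

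Next, we expand each term by multilinearity. Write $X(t)=\sum_{q=1}^{2L}\ef{q\to1}[t^{\otimes q}]$, where the $q=1$ term is the identity, which is a base tensor. Then $\nabla^k(\log g)(0)[X(t)^{\otimes k}]$ is a finite sum, over tuples $(q_1,\dots,q_k)$, of terms
\[
\nabla^k(\log g)(0)\big[\ef{q_1\to1}[t^{\otimes q_1}],\dots,\ef{q_k\to1}[t^{\otimes q_k}]\big].
\]
Each such term is a multilinear form of degree $q_1+\dots+q_k$ in $t$. After expanding every $\ef{q_i\to1}$ in powers of $\e$, it is a sum of $\e^{\ell}$ times a tensor built from $\nabla^k(\log g)(0)$ and base tensors, which depend only on $d$ and on the allowed derivatives of $f$ and $\log g$.

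We then symmetrize each such tensor. Symmetrization does not change its value on $t^{\otimes m}$ and does not increase its operator norm bound, since the norm of a composition is bounded by products of operator norms. The bound used here is $|A[x_1,\dots,x_k]|\le c\prod\|x_i\|$ for the non-symmetric composites, which follows from polarization for symmetric tensors with a constant depending only on $k$. After symmetrizing, each term is a composite tensor $\ef{m}$ evaluated at $t^{\otimes m}$.

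Finally, we group the terms by total degree $m$. Degrees $m\le 2L-1$ give the composite tensors $\ef{m}$ in~\eqref{log-g}. Every term of degree $m\ge 2L$ is bounded by $c\|t\|^m\le c\|t\|^{2L}$ because $\|t\|\le1$, and there are finitely many such terms, with a number depending only on $L$. They are therefore absorbed into $\CO(\|t\|^{2L})$.

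The only delicate point is the bookkeeping that keeps the products of composite tensors composite with $d$- and $\la$-independent norm constants. In particular, we must verify the polarization and operator-norm estimate for the non-symmetric compositions. We must also confirm that powers of $\e$ arise only through the $\e$-expansions of the $\ef{q\to1}$, and not through explicit $\la$-dependence, which holds since $\nabla^k(\log g)(0)$ itself is a base tensor.
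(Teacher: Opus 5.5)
Your proposal is correct and follows essentially the same route as the paper. Both arguments apply \eqref{eq:Taylor-g} at $X(t)$, bound the remainder by $\|X(t)\|^{2L}\lesssim\|t\|^{2L}$, expand $\sum_{k}\frac{1}{k!}\nabla^k(\log g)(0)[X(t)^{\otimes k}]$ into composite tensors, and move the degrees $\ge 2L$ into $\CO(\|t\|^{2L})$. The difference is that you redo the symmetrization and composition bookkeeping inline, with a slightly more explicit off-diagonal norm bound, whereas the paper cites \eqref{njouter} of Corollary~\ref{corr:pq}, i.e.\ the composition identity of Lemma~\ref{lma:op}. Also, $X(t)\in\{\|x\|\le R\sqrt\e\}$ for $t\in\us_1$ holds directly by the definition of $\us_1$, not by Lemma~\ref{lma:U}.
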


\begin{proof}
Using~\eqref{eq:Taylor-g} and~\eqref{Gk norms}, and the fact that $\tfrac{1}{k!}\nabla^k(\log g)(0)$ is a base tensor and therefore a composite tensor, we have
\bs
\log g(t+\varphi(t))=\sum_{k=1}^{2L-1} \ef{k}[(t+\varphi(t))^{\otimes k}] + \mathcal{R}_{2L}^{(g)}(t+\varphi(t)),\quad\forall t\in\us_1.
\es 
Now, recall that $R\sqrt\e\leq r_0$ is one of the assumptions of Lemma~\ref{lma:in}. Since $\|t+\varphi(t)\|=\|X(t)\|\leq R\sqrt\e\leq r_0$ when $t\in\us_1$ (by the definition of $\us_1$), we have $|\mathcal{R}_{2L}^{(g)}(t+\varphi(t))|\les\|t+\varphi(t)\|^{2L}\les\|t\|^{2L}$. Here, we used that $\|\varphi(t)\|\les\|t\|$ for $t\in\us_1$.  Next, using Lemma~\ref{lma:changevar}, 
\be
t+\varphi(t)=t+\sum_{q\geq2}\ef{q\to1}[t^{\otimes q}]=\sum_{q\geq1}\ef{q\to1}[t^{\otimes q}].
\ee
But then \eqref{njouter} with $j=0$ in Corollary~\ref{corr:pq} gives $\sum_{k=1}^{2L-1} \ef{k}[(t+\varphi(t))^{\otimes k}]=\sum_{k\geq1}\ef{k}[t^{\otimes k}]$. Thus we have shown $\log g(t+\varphi(t))=\sum_{k\geq1}\ef{k}[t^{\otimes k}]+\CO(\|t\|^{2L})$, $t\in\us_1$. To conclude, we move the part of the sum with $k\geq 2L$ into the remainder $\CO(\|t\|^{2L})$.
\end{proof}

We now use~\eqref{logdet v3} and~\eqref{log-g} in~\eqref{initchange} to get
\bs
\frac{\la}{2}\|t\|^2&-\log g(X(t))-\log \detm( X'(t)) \\
&= \la\left(\frac12\|t\|^2+\frac1\la\sum_{k=1}^{2L-1}\ef{k}[t^{\otimes k}]+\e\sum_{k=1}^{2L-1}\ef{k}[t^{\otimes k}]\right)+\CO(\|t\|^{2L}),\quad\forall t\in\us_1.
\es
We can combine $\frac1\la\ef{k}$ and $\e\ef{k}$ into $\e\ef{k}$. Also, since $\|t\|\leq 2R\sqrt\e$ for all $t\in\us_1$ as discussed below~\eqref{usus1}, we have $\CO\big(\norm{t}^{2L}\big)=\CO(R^{2L}\e^L)$. We conclude that
\bs\label{E1}
I_{\mathrm{in}}(\la)&=e^{\CO(R^{2L+2}d\e^L)}\left(\frac{\la}{2\pi}\right)^{d/2}\int_{\us_1}\exp\left(-\la E_1(t)\right)\dd t,\\
E_1(t)&:=\tfrac12\norm{t}^2+\e\sum_{k=1}^{2L-1}\ef{k}[t^{\otimes k}],\\
\{\|t\|\leq \tfrac23R\sqrt\e\}&\subseteq\us_1\subseteq\{\|t\|\leq 2R\sqrt\e\}.
\es
Comparing $E_1$ in~\eqref{E1} with the original $f$, we see that $E_1$ is \emph{closer to being exactly quadratic}. If $L=1$, we stop here and estimate $I_{\mathrm{in}}(\la)$ as in Section~\ref{ssec:rem bnd}. If $L\ge 2$, we show next that by iteratively changing variables, we can continue to increase the power of $\e$ in front of cubic and higher powers of $t$.
%
 \begin{lemma}\label{lma:changevar-m}
 Let $1\leq m\leq L-1$ and
 \be\label{Em}
 E_m(t)=\e\ef{1}[t] + \e\ef{2}[t^{\otimes 2}]+\tfrac12\norm{t}^2+\e^m\sum_{k=3}^{2L-2m+1}\ef{k}[t^{\otimes k}].
 \ee 
Let $C$ be an absolute constant to be chosen later. There is an explicitly computable change of variables 
\be
T_m(s)=s+\e^m\varphi_m(s),\quad \varphi_m(s)=\sum_{k=2}^{2L-2m}\ef{k\to1}[s^{\otimes k}],
\ee 
such that for all $\|s\|\leq CR\sqrt\e$,
\be\label{Emt}
 E_m(T_m(s))=\e\ef{1}[s] + \e\ef{2}[s^{\otimes 2}]+\tfrac12\norm{s}^2+\e^{m+1}\sum_{k=3}^{2L-2m-1}\ef{k}[s^{\otimes k}]+\CO\big((R\sqrt\e)^{2L+2}\big).
 \ee  
\end{lemma}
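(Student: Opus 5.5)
We substitute $t=T_m(s)=s+\e^m\varphi_m(s)$ into each of the four groups of terms in \eqref{Em}, and choose the tensors in $\varphi_m$ degree by degree so that the $\e^m$-order terms of degree $3,\dots,2L-2m+1$ cancel. Writing $\varphi_m(s)=\sum_{k=2}^{2L-2m}\ef{k\to1}[s^{\otimes k}]$ and expanding $\tfrac12\|s+\e^m\varphi_m(s)\|^2$ gives
\[
\tfrac12\|s\|^2+\e^m s^\top\varphi_m(s)+\tfrac12\e^{2m}\|\varphi_m(s)\|^2 .
\]
The key observation is the same as in Lemma~\ref{lma:changevar}: to kill $\e^m\ef{k}[s^{\otimes k}]$ we take $\ef{k-1\to1}[s^{\otimes k-1}]:=-\ef{k}[s^{\otimes k-1},\cdot\,]$, i.e.\ the vector obtained by contracting the symmetric tensor $\ef{k}$ with $k-1$ copies of $s$. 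This vector is again a composite tensor, so $s^\top\ef{k-1\to1}[s^{\otimes k-1}]=-\ef{k}[s^{\otimes k}]$, which cancels exactly. The degrees match: $k$ ranges over $3,\dots,2L-2m+1$, so $k-1$ ranges over $2,\dots,2L-2m$, as in the statement.

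Next we verify that every remaining term is either absorbed into the first three terms of \eqref{Emt}, is of order $\e^{m+1}$ with degree $3\le k\le 2L-2m-1$, or lies in the remainder $\CO((R\sqrt\e)^{2L+2})$. The cross terms to check are:
\begin{itemize}
\item $\tfrac12\e^{2m}\|\varphi_m\|^2$, which has degree $\ge4$ and prefactor $\e^{2m}\le\e^{m+1}$;
\item $\e\ef{1}[\e^m\varphi_m(s)]$, of prefactor $\e^{m+1}$ and degree $\ge2$;
\item $\e\ef{2}[(s+\e^m\varphi_m)^{\otimes 2}]$, giving degree $\ge3$ terms with prefactor at least $\e^{m+1}$;
\item the multilinear expansion of $\e^m\ef{k}[(s+\e^m\varphi_m(s))^{\otimes k}]$ beyond its leading part, whose terms carry at least $\e^{2m}$.
\end{itemize}
The only subtle terms are the degree-2 terms from $\e\ef{1}[\e^m\ef{2\to1}[s^{\otimes2}]]$. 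These have the form $\e^{m+1}\ef{2}[s^{\otimes2}]$, and since $\e^{m+1}=\e\cdot\e^m$ with $\e^m\ef{2}$ again composite by Definition~\ref{def:base comp}, they fold into $\e\ef{2}[s^{\otimes2}]$. To make this bookkeeping rigorous we use Corollary~\ref{corr:pq} (polynomials in sums of composite tensors are again sums of composite tensors), as in the proof of Lemma~\ref{lma:log-g}. Symmetrization of the resulting multilinear forms keeps them composite.

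It then remains to truncate. Every term of the form $\e^{j}\ef{k}[s^{\otimes k}]$ that does not fit the target form has $j\ge m+1$ and $k\ge 2L-2m$, or more generally satisfies $2j+k\ge 2L+2$. For $\|s\|\le CR\sqrt\e$ such a term is bounded by $\e^j(CR\sqrt\e)^k\les (R\sqrt\e)^{2j+k}\le (R\sqrt\e)^{2L+2}$, using $R\ge1$ (after enlarging $c^-$) and $R\sqrt\e\le c^+<1$. The operator norms of the composite tensors are bounded by constants depending only on $\argc{f},\argc{g}$, since the number of terms is bounded in terms of $L$. Note in particular that the terms with $j=m+1$ and $k=2L-2m$ or $2L-2m+1$ have $2j+k\ge 2L+2$, which explains why the upper limit in \eqref{Emt} drops to $2L-2m-1$.

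The main obstacle is this power-counting. We must confirm that every dropped term satisfies $2j+k\ge2L+2$, including the degree-$\ge3$ terms generated by $\e\ef{1}$ and $\e\ef{2}$ composed with high-degree pieces of $\varphi_m$, whose weight is $2(m+1)+k$ with $k\le 2L-2m$. A clean way to organize this is to assign weight $2j+k$ to $\e^j s^{k}$ and observe that the substitution does not decrease weight.
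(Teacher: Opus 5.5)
Your proposal is correct, but it is organized differently from the paper's proof. The paper applies Lemma~\ref{lma:M-m} iteratively for $M=3,\dots,2L-2m+1$, removing one degree at a time. It composes the resulting near-identity maps into $h=t_3\circ\cdots\circ t_{2L-2m+1}$, uses Lemma~\ref{lma:circ} to write $h(s)=s+\e^m\sum_{k\ge2}\ef{k\to1}[s^{\otimes k}]$, and then truncates $h$ at degree $2L-2m$ to obtain $T_m$. That truncation needs a separate bound on $|E_m(h(s))-E_m(T_m(s))|$.

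You instead make a single substitution with the closed-form choice $\varphi_m(s)=-\sum_{k=3}^{2L-2m+1}\ef{k}[s^{\otimes k-1},\cdot\,]$. This works precisely because $m\ge1$. Apart from the cross term $\e^m s^\top\varphi_m(s)$ coming from $\tfrac12\|s\|^2$, which cancels the order-$\e^m$ terms exactly, every term involving $\varphi_m$ carries $\e^{m+1}$ or $\e^{2m}$, and $2m\ge m+1$. So the substitution never regenerates order-$\e^m$ terms, and there is nothing to iterate. At $m=0$ (Lemma~\ref{lma:changevar}) the cross terms are of order $\e^0$, and the degree-by-degree iteration is genuinely needed; this explains why the paper uses the same template in both places.

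Your route buys an explicit formula for $T_m$ and dispenses with Lemma~\ref{lma:circ} and the truncation estimate. The paper's route buys uniformity with the $m=0$ construction and reuse of a single auxiliary lemma. Your weight bookkeeping is what the remainder needs: every dropped term satisfies $2j+k\ge 2L+2$, using $R\ge1$ and $R\sqrt\e\le1$. This covers the $\e^{m+1}$ terms of degree $2L-2m$ and $2L-2m+1$, and the $\e^{2m}$ terms of degree $\ge 2L-2m$ (weight $\ge 2L+2m$), where $m\ge1$ is used again.
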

See the end of Appendix~\ref{app:change} for the proof and the construction of $T_m$. The value of $C$ will be chosen below the proof of Corollary~\ref{corr:E}.

As we will see below, the big-$\CO$ term in~\eqref{Emt} contributes the factor $\exp\big(\CO(\la (R\sqrt\e)^{2L+2})\big)$ to $I_{\mathrm{in}}$. Observe that this matches the desired order of magnitude in Lemma~\ref{lma:in}.  Setting $m=1$ in \eqref{Em} gives the function $E_1$ in \eqref{E1}. 

Comparing~\eqref{Em} to~\eqref{Emt}, we see that the power of $\e$ and highest power $k$ of $s$ have changed. The effect of $T_m(s)$ is only to increase the power of $\e$ from $m$ to $m+1$, not to decrease the highest power of $s$ from $2L-2m+1$ to $2L-2m-1$. Terms with these higher powers of $s$ do appear upon plugging in $T_m(s)$ to $E_m$. However, precisely because of the higher power of $\e$, any $\e^{m+1}\ef{k}[s^{\otimes k}]$ with $k>2L-2m-1$ can simply be thrown out, i.e. absorbed into the $\CO((R\sqrt\e)^{2L+2})$ remainder.\\




In the following lemma, we study the change of variables $T_m$.

\begin{lemma}\label{lma:phi-m} Let $1\leq m\leq L-1$, $T_m$ be as in Lemma~\ref{lma:changevar-m} and $\us_m$ be a set satisfying
\be
\{\|t\|\leq c_mR\sqrt \e\}\subseteq \us_m\subseteq\{\|t\|\leq C_mR\sqrt \e\}
\ee
for some absolute constants $0<c_m\leq C_m$. Let $\us_{m+1}=T_m^{-1}(\us_m)$. For all $\e$ small enough that $\e^m\|\varphi_m'(t)\|\leq1/2$ for all $\|t\|\leq 2C_{m}R\sqrt\e$, it holds
\begin{enumerate}[label=(\arabic*)]
\item 
\be
\{\|t\|\leq \tfrac23c_{m}R\sqrt \e\}\subseteq \us_{m+1}\subseteq \{\|t\|\leq 2C_{m}R\sqrt\e\},
\ee 
\item $T_m:\us_{m+1}\to \us_m$ is bijective, and 
\item We have
\bs\label{logdet expr}
\log\detm(T_m'(s)) = d\e^m\sum_{k=1}^{2L-2m-1}\ef{k}[s^{\otimes k}] + \CO(dR^{2L}\e^{L})\quad\forall s\in\us_{m+1}.
\es
\end{enumerate}
\end{lemma}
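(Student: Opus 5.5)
Parts (1) and (2) follow directly from Lemma~\ref{lma:U}. I would apply that lemma to $X=T_m$ with perturbation $\e^m\varphi_m(s)=\sum_{k=2}^{2L-2m}\e^m\ef{k\to1}[s^{\otimes k}]$. Since $\e^m\ef{k\to1}$ is again a composite tensor, this has the form required in Lemma~\ref{lma:U}. I would take $C_1=c_m$, $C_2=C_m$ and $r=R\sqrt\e$. The hypothesis $\|\varphi'(t)\|\le\frac12$ for $\|t\|\le 2C_2r$ is exactly the assumption $\e^m\|\varphi_m'(t)\|\le 1/2$ for $\|t\|\le 2C_mR\sqrt\e$. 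The remaining condition $r\le 1/(2C_2)$ holds because $R\sqrt\e\le c^+$ can be made small. Lemma~\ref{lma:U} then gives the inclusions in (1) and the bijectivity in (2).

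For part (3), I would write $T_m'(s)=I_d+\e^m\varphi_m'(s)$. Here $\varphi_m'(s)=\sum_{k=1}^{2L-2m-1}\ef{k\to2}[s^{\otimes k}]$, after multiplying each coefficient by $k$, which keeps it composite. On $\us_{m+1}\subseteq\{\|s\|\le 2C_mR\sqrt\e\}$ we have $\|\e^m\varphi_m'(s)\|_{\op}\le 1/2$. So I would use the convergent series
\[
\log\det(I+A)=\tr\log(I+A)=\sum_{j=1}^{J}\tfrac{(-1)^{j+1}}{j}\tr A^j+\CO\big(d\|A\|_{\op}^{J+1}\big),
\]
with the bound $|\tr B|\le d\|B\|_{\op}$. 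Since $\|A\|_{\op}\lesssim\e^m\|s\|$ and $\|s\|\lesssim R\sqrt\e$, the error term is $\CO\big(d(\e^mR\sqrt\e)^{J+1}\big)$.

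I would choose $J=L$. Then the error is at most $d\e^{mL}(R^2\e)^{(L+1)/2}\le dR^{2L}\e^L$, using $m\ge1$, $\e\le1$ and $R\ge1$; if $R^{L+1}\le R^{2L}$ fails, I would simply take $J$ larger. Each power $\tr A^j=\e^{mj}\tr(\varphi_m'(s)^j)$ is a polynomial in $s$. By the tensor-product rules of Corollary~\ref{corr:pq}, or equivalently Lemma~\ref{lma:logdet}, which the paper uses for exactly this purpose, it can be written as $\e^{mj}\sum_k\tr\,\ef{k\to2}[s^{\otimes k}]$. The map $s\mapsto \frac1d\tr\,\ef{k\to2}[s^{\otimes k}]$ is a $k$-linear form whose operator norm is bounded by that of the matrix-valued tensor. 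Symmetrizing, it is therefore $\ef{k}[s^{\otimes k}]$. Since $\e^{mj}=\e^m\cdot\e^{m(j-1)}$, all terms can be collected as $d\e^m\sum_k\ef{k}[s^{\otimes k}]$. In fact this is most efficiently done by citing Lemma~\ref{lma:logdet} directly with this $m$, in the same way Lemma~\ref{lma:logdet exp} cites it with $m=0$.

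Finally I would truncate in $k$. Any term $d\e^m\ef{k}[s^{\otimes k}]$ with $k\ge 2L-2m$ is bounded by $d\e^m(CR\sqrt\e)^{k}\lesssim d\e^{m+L-m}R^{2L-2m}\le dR^{2L}\e^L$, so it is absorbed into the remainder. This leaves exactly the sum over $1\le k\le 2L-2m-1$ claimed in \eqref{logdet expr}.

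The main obstacle is this bookkeeping. One must check that the traces of compositions of composite matrix-valued tensors are $d$ times composite scalar tensors with constants independent of $d$, which uses $|\tr B|\le d\|B\|_{\op}$. One must also check that the truncation orders match the target $dR^{2L}\e^L$.
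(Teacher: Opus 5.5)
Your proposal is correct and follows the paper's proof. Parts (1)--(2) are Lemma~\ref{lma:U} with $r=R\sqrt\e$, $C_1=c_m$, $C_2=C_m$, and part (3) is Lemma~\ref{lma:logdet} applied with this $m$. The paper simply takes $N=2L-2m$ there, so the remainder $d\e^m\CO(\|s\|^{2L-2m})=\CO(dR^{2L-2m}\e^L)$ comes out directly; your separate $J$-truncation and degree-truncation steps are harmless but unnecessary.
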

The statements (1) and (2) follow from Lemma~\ref{lma:U} with $r=R\sqrt\e$. Part (3) follows by setting $N=2L-2m$ in Lemma~\ref{lma:logdet} and using part (1) to characterize the diameter of $\us_{m+1}$. We combine the above two lemmas in the following corollary, thereby completing one full iteration from $E_m$ to $E_{m+1}$.

\begin{corollary}\label{corr:E}Let $1\leq m\leq L-1$, $E_m,E_{m+1},\varphi_m$ be as in Lemma~\ref{lma:changevar-m} and $\us_m$ be as in Lemma~\ref{lma:phi-m}. Then
\bs\label{Em1}
\la E_m(T_m(s)) +\log\detm(T_m'(s)) = \la E_{m+1}(s) +\CO(R^{2L+2}d\e^L)\quad\forall s\in\us_{m+1}.
\es 
Therefore, 
\be\label{Emm1}
\int_{\us_m}e^{-\la E_m(t)}\dd t = e^{\CO(R^{2L+2}d\e^L)}\int_{\us_{m+1}}e^{-\la E_{m+1}(s)}\dd s.
\ee
\end{corollary}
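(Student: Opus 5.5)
The plan is to combine Lemma~\ref{lma:changevar-m} and part (3) of Lemma~\ref{lma:phi-m} directly, and then get \eqref{Emm1} by the change of variables formula. Before anything else, we choose the constant $C$ of Lemma~\ref{lma:changevar-m} to be at least $2C_m$. Then part (1) of Lemma~\ref{lma:phi-m} gives $\us_{m+1}\subseteq\{\|s\|\le 2C_mR\sqrt\e\}\subseteq\{\|s\|\le CR\sqrt\e\}$, so the expansion \eqref{Emt} is valid on all of $\us_{m+1}$.

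\textbf{Step 1: the energy after the substitution.} Multiplying \eqref{Emt} by $\la$, the remainder becomes $\la\,\CO((R\sqrt\e)^{2L+2})=\CO(R^{2L+2}\la\e^{L+1})=\CO(R^{2L+2}d\e^L)$, since $\la\e=d$.

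\textbf{Step 2: the log-determinant.} By \eqref{logdet expr},
\[
\log\detm(T_m'(s))=d\e^m\sum_{k=1}^{2L-2m-1}\ef{k}[s^{\otimes k}]+\CO(dR^{2L}\e^L),
\]
and since $R\ge c^-\geq 1$ we may bound $dR^{2L}\e^L\le R^{2L+2}d\e^L$. We write $d\e^m=\la\e^{m+1}$, so the log-determinant equals $\la\cdot\e^{m+1}\sum_{k=1}^{2L-2m-1}\ef{k}[s^{\otimes k}]$ plus the remainder. We then split this sum by degree.
\begin{itemize}
\item For $k=1,2$, the term $\e^{m+1}\ef{k}$ has the form $\e\cdot(\e^m\ef{k})$, and $\e^m\ef{k}$ is again a composite tensor by Definition~\ref{def:base comp}. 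It therefore merges with the terms $\e\ef{1}[s]+\e\ef{2}[s^{\otimes2}]$ already present, since a sum of composite tensors is composite.
\item For $3\le k\le 2L-2m-1$, the term $\e^{m+1}\ef{k}$ adds to the corresponding term in \eqref{Emt}, again giving a composite tensor.
\end{itemize}
The combined expression is then exactly of the form $E_{m+1}(s)$ in \eqref{Em}, with $m$ replaced by $m+1$ and top degree $2L-2(m+1)+1=2L-2m-1$. This proves \eqref{Em1}.

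The only subtle point is this bookkeeping: the degrees produced by the Jacobian must not exceed $2L-2m-1$, and the power of $\e$ must not drop below $m+1$. Both hold by \eqref{logdet expr}, which is why Lemma~\ref{lma:phi-m} was stated with the sum truncated at $2L-2m-1$. We should also check that the hypothesis of Lemma~\ref{lma:phi-m}, namely $\e^m\|\varphi_m'\|\le1/2$ on the relevant ball, holds. This follows because $\varphi_m'$ is a finite sum of composite tensors evaluated at points with $\|s\|\lesssim R\sqrt\e\le c^+$, so the condition holds once $c^+$ is chosen small.

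\textbf{Step 3: the integral identity \eqref{Emm1}.} By part (2) of Lemma~\ref{lma:phi-m}, $T_m:\us_{m+1}\to\us_m$ is a $C^1$ bijection. Since $\e^m\|\varphi_m'\|\le 1/2$, its Jacobian determinant is positive, so $|\detm T_m'|=\detm T_m'=e^{\log\detm T_m'}$. The change of variables formula then gives
\[
\int_{\us_m}e^{-\la E_m(t)}\dd t=\int_{\us_{m+1}}\exp\big(-\la E_m(T_m(s))+\log\detm T_m'(s)\big)\dd s .
\]
Next we insert \eqref{Em1}. For the signs to match we need the identity with a minus sign on the log-determinant, that is, $\la E_m(T_m)-\log\detm T_m'$. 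The argument in Steps 1 and 2 is unchanged under this sign, because the sign is absorbed into the composite tensors. This gives pointwise
\[
e^{-\la E_m(T_m(s))}\detm T_m'(s)=e^{-\la E_{m+1}(s)}e^{\CO(R^{2L+2}d\e^L)}
\]
with a uniform constant in the remainder. Pulling the uniform factor out of the integral of a positive integrand yields \eqref{Emm1}.
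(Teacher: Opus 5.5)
Your proposal is correct and takes essentially the paper's route: combine \eqref{Emt} with \eqref{logdet expr}, use $d\e^m/\la=\e^{m+1}$ to absorb the Jacobian terms into the structure of $E_{m+1}$, multiply by $\la$, and change variables through the bijection $T_m$. The extra care you take is well placed and does not change the argument: you note that the change of variables actually needs $\la E_m(T_m)-\log\detm T_m'$ (the paper's \eqref{Em1} writes $+$, which is harmless because signs are absorbed into the composite tensors), and you also check that $\detm T_m'>0$ and choose $C\ge 2C_m$.
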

\begin{proof}
We combine Lemma~\ref{lma:phi-m} and Lemma~\ref{lma:changevar-m} to study $E_m(T_m(s)) +\tfrac1\la\log\detm(T_m'(s))$. Clearly, $\frac1\la\times d\e^m=\e^{m+1}$. Then
\bs\label{Emi}
E_m(&T_m(s)) +\tfrac1\la\log\detm(T_m'(s))\\
=&\e\ef{1}[s] + \e\ef{2}[s^{\otimes 2}]+\tfrac12\norm{s}^2+\e^{m+1}\sum_{k=3}^{2L-2m-1}\ef{k}[s^{\otimes k}]\\
&+\e^{m+1}\sum_{k=1}^{2L-2m-1}\ef{k}[s^{\otimes k}]+\CO(R^{2L+2}\e^{L+1}) \\
=&E_{m+1}(s)+\CO(R^{2L+2}\e^{L+1}).
\es
Thus, comparing the second and third line in~\eqref{Emi}, we see that each polynomial term in $\tfrac1\la\log\detm(T_m'(s))$ has an equal or higher power of $\e$ than the polynomial term in $E_m(T_m(s))$ of the same degree. In other words, the log Jacobian does not harm the convenient structure created by changing variables.  
Multiplying both sides of~\eqref{Emi} by $\la$ and noting that $\la R^{2L+2}\e^{L+1}=dR^{2L+2}\e^L$ proves~\eqref{Em1}.

To prove~\eqref{Emm1}, we use the change of variables $t=T_m(s)$, which is a bijection from $\us_{m+1}$ onto $\us_m$ by Lemma~\ref{lma:phi-m}. We then apply~\eqref{Em1} to conclude.
\end{proof}

Starting with~\eqref{E1}, we iteratively apply Lemma~\ref{lma:phi-m} and Corollary~\ref{corr:E}, stopping once we get to $m+1=L$. We conclude that
\bs\label{IJ123}
I_{\mathrm{in}}(\la)&=e^{\CO(R^{2L+2}d\e^L)}\left(\frac{\la}{2\pi}\right)^{d/2}\int_{\us_{L}}e^{-\la E_{L}(t)}\dd t,\\
E_{L}(t)&= \e \ef{1}[t]+\e \ef{2}[t^{\otimes 2}]+\tfrac12\|t\|^2,\quad
\us_L=T_{L-1}^{-1}\circ \dots \circ T_1^{-1}(\us_1).
\es 
Furthermore, by the repeated application of Lemma~\ref{lma:phi-m}, and using that $c_1=2/3, C_1=2$  (recalling~\eqref{E1}), we have
\be\label{UL size}
\{\|t\|\leq (2/3)^LR\sqrt \e\}\subseteq \us_{L}\subseteq \{\|t\|\leq 2^LR\sqrt\e\}.
\ee
Thus we see that in Lemma~\ref{lma:changevar-m}, we can take $C=2^L$. Note that~\eqref{IJ123} and \eqref{UL size} both also hold for $L=1$, i.e. $E_L$ from~\eqref{IJ123} coincides in structure with $E_1$ from~\eqref{E1}, upon setting $\ef{2}=0$ in~\eqref{IJ123}. 

The following quantities will appear in Sections~\ref{ssec:rem bnd},~\ref{sec:terms}, and~\ref{sec:meas}. 
\begin{definition}\label{def:aB}Let $a,B$ be such that $E_L$ in~\eqref{IJ123} can be written as
\be\label{ELaB}E_L(t)=a^\top t + \frac12t^\top Bt.\ee Thus in particular,
\bs\label{aB}
a&=\e J_1,\quad B=I_d+2\e J_2,\\
J_1&=\ef{0\to1},\quad J_2=\ef{0\to2}.
\es where $\ef{0\to1}$ and $\ef{0\to2}$ are the vector and matrix identifications of the specific $\ef{1}$, $\ef{2}$ appearing in $E_L$ in~\eqref{IJ123}, respectively.
\end{definition}
Since $\ef{2}$ is by definition a symmetric bilinear form, the matrix $B$ is symmetric.
\subsection{Completing the square}\label{ssec:rem bnd}
Before presenting the main result of the section, we make the following observation: let $A=\ef{2}=\ef{2\to0}$ be a composite tensor given by a bilinear form, i.e. taking in two vectors and returning a scalar. 
Then $A$ can also be viewed as $A=\ef{0\to2}$, i.e. a constant mapping returning a matrix. The same is true for $\ef{1\to0}$ also being $\ef{0\to1}$. The operator norms are preserved under this identification.\\

The main result in this section is the following.
\begin{lemma}\label{lma:square} Under the assumptions of Lemma~\ref{lma:in}, it holds
\be\label{logIJ}
\log I_{\mathrm{in}}(\la)=\frac\la2a^\top B^{-1}a-\frac12\log\det B+\CO(R^{2L+2}d\e^L),
\ee where $a,B$ are as in~\eqref{aB}. 
\end{lemma}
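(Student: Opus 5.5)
Starting from \eqref{IJ123}, the plan is to extend the integral over $\us_L$ to all of $\br^d$, evaluate the resulting Gaussian integral exactly, and show that the extension costs only a multiplicative factor $e^{\CO(R^{2L+2}d\e^L)}$.

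\textbf{Step 1: exact Gaussian integral.} By Definition~\ref{def:aB}, $E_L(t)=a^\top t+\frac12 t^\top Bt$ with $B=I_d+2\e J_2$. Since $\|J_2\|_\op\les1$ and $\e\le R^2\e$ is small by \eqref{R ineq v2}, we have $\tfrac12 I_d\preceq B\preceq 2I_d$. Completing the square, $E_L(t)=\frac12(t+B^{-1}a)^\top B(t+B^{-1}a)-\frac12a^\top B^{-1}a$, so
\[
\Big(\frac{\la}{2\pi}\Big)^{d/2}\int_{\br^d}e^{-\la E_L(t)}\dd t=\exp\Big(\tfrac\la2a^\top B^{-1}a-\tfrac12\log\det B\Big).
\]
Hence it suffices to show that the fraction of the Gaussian mass outside $\us_L$ is at most $R^{2L+2}d\e^L$. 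More precisely, writing $W\sim\mathcal N(-B^{-1}a,(\la B)^{-1})$, we need $P(W\notin\us_L)\les R^{2L+2}d\e^L$. The ratio of the integral over $\us_L$ to the full integral is then $1-P(W\notin\us_L)=e^{\CO(\cdot)}$, using that $R^{2L+2}d\e^L\le c^+$ is small.

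\textbf{Step 2: tail bound.} By \eqref{UL size}, $\us_L\supseteq\{\|t\|\le(2/3)^LR\sqrt\e\}$, so it is enough to bound $P(\|W\|>(2/3)^LR\sqrt\e)$. The mean satisfies $\|B^{-1}a\|\le2\e\|J_1\|\les\e\le\sqrt\e$, which is negligible compared with $R\sqrt\e$ once $R\ge c^-$ is large. Write $W+B^{-1}a=\la^{-1/2}B^{-1/2}Z$ with $Z\sim\mathcal N(0,I_d)$. Then $\|W+B^{-1}a\|\le\sqrt{2/\la}\,\|Z\|$, and it suffices to bound $P(\|Z\|\ge cR\sqrt d)$ with $c=(2/3)^L/(2\sqrt2)$. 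The standard chi-square bound $P(\|Z\|\ge\sqrt d+u)\le e^{-u^2/2}$ gives $e^{-c'R^2d}$ once $R$ is large. Finally, by the first condition in \eqref{R ineq v2}, $R^2d\ge (c^-)^2\max(d,\log\la)$. Hence $e^{-c'R^2d}\le \la^{-L}\cdot e^{-d}$ if $c^-$ is large enough, which is at most $d^{L+1}\la^{-L}\le R^{2L+2}d\e^L$ because $R\ge1$.

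\textbf{Main obstacle.} The delicate point is making this last comparison uniform, so that the Gaussian tail is dominated by the remainder even when $d$ stays bounded. This is exactly why the lower bound in \eqref{R ineq v2} contains $\log\la/d$, and the constant $c^-$ must be chosen with the factor $(2/3)^L$ from \eqref{UL size} in mind. A smaller point is that the integrand is positive, so when $L=1$ and $\ef{2}$ is absent the same argument applies verbatim.
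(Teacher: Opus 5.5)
Your proposal is correct and follows essentially the paper's argument. You complete the square to get the exact Gaussian integral $e^{\frac\la2 a^\top B^{-1}a}(\det B)^{-1/2}$, then show that the Gaussian mass of $\us_L$ is at least $1-\la^{-L}$, using that $\us_L$ contains a ball of radius $\asymp R\sqrt\e$ while the mean shift is only $\CO(\e)$, together with Gaussian norm concentration and $R^2d\gtrsim L\log\la$; the paper phrases the same thing after the change of variables $t=Q(s)$ as a lower bound on $\P(Z\in\sqrt\la\tilde\us_L)$. One small slip: $R^{2L+2}d\e^L\le c^+$ is not among the hypotheses \eqref{R ineq v2} of Lemma~\ref{lma:in}, but you do not need it, since your own bound $P(W\notin\us_L)\le e^{-d}\la^{-L}\le e^{-1}$ already gives $|\log(1-P(W\notin\us_L))|\les P(W\notin\us_L)$.
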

\begin{proof}
Recall $B$ is symmetric, and it is invertible if $\e$ is small enough. By definition of $a,B$, we have
\bs\label{after sqr}
E_{L}(t)&= a^\top t + \frac12t^\top Bt = \frac12\|B^{1/2}t +B^{-1/2}a\|^2  - \frac12a^\top B^{-1}a.
\es
We now do the final change of variables
\be\label{tQs}
t=Q(s) = B^{-1/2}(s-B^{-1/2}a).
\ee
Then
\bs\label{eBa}
\left(\frac{\la}{2\pi}\right)^{d/2}\int_{\us_{L}}e^{-\la E_{L}(t)}\dd t&=e^{\frac\la2a^\top B^{-1}a}\left(\frac{\la}{2\pi}\right)^{d/2}\int_{\us_L}e^{-\frac\la2\|B^{1/2}t +B^{-1/2}a\|^2}\dd t\\
&=e^{\frac\la2a^\top B^{-1}a}(\det B)^{-1/2}\mathbb P(Z\in\sqrt\la\tilde\us_L),
\es 
where $Z\sim\mathcal N(0, I_d)$ and $\tilde\us_L=B^{1/2}\us_{L}+B^{-1/2}a$. Note that $\|B^{-1}\|_\op\leq (1-2\e\|\ef{2}\|_\op)^{-1}$, which is bounded if $\e$ is sufficiently small. $\|B\|_\op$ is also bounded, and $\|a\|\les\e$. Recall from~\eqref{UL size} that $\{\|t\|\leq CR\sqrt\e\}\subseteq\us_L$ for some absolute constant $C$. We find a $C'=C'(\argc{f},\argc{g})>0$ such that $\{\|x\|\leq C'R\sqrt\e\}\subset\tilde\us_L$. To do so, it suffices to prove that if $x=B^{1/2}t+B^{-1/2}a$ and $\|x\|\leq C'R\sqrt\e$ then $\|t\|\leq CR\sqrt\e$. Indeed, we have $t=B^{-1/2}x-B^{-1}a$, and therefore $\|t\|\leq \argc{f,g}(C'R\sqrt\e +\e)$. Here, $\argc{f,g}$ is some constant depending on $\argc{f},\argc{g}$, only. Thus, $C'$ can be found from the inequality
\be
\argc{f,g}(C'R\sqrt\e +\e)\le CR\sqrt\e
\ee
for all $\e$ sufficiently small. Dividing by $R\sqrt\e $ and using that $R\ge1$ and $\e$ can be made as small as necessary, by \eqref{R ineq v1}, we see that $C'$ can be found.

Thus $\{\|x\|\leq C'R\sqrt d\}\subset\sqrt\la\tilde\us_L$. Assuming furthermore that $C'R\geq2$ by~
\eqref{R ineq v2}, we have
$$
1-\exp(-{C'}^2R^2d/8)\leq 1-\exp(-(C'R-1)^2d/2)\leq\mathbb P(Z\in\sqrt\la\tilde\us_L)\leq 1.
$$ Finally, by~\eqref{R ineq v2} we can also assume $R^2\geq \frac{8L\log\la}{{C'}^2d}$. We then obtain the following further lower bound:
\be\label{PZlb}
1-\la^{-L}\leq  \mathbb P(Z\in\sqrt\la\tilde\us_L)\leq 1.
\ee 
Combining~\eqref{IJ123},~\eqref{eBa}, and~\eqref{PZlb} now gives 
\be
I_{\mathrm{in}}(\la)=e^{\CO(R^{2L+2}d\e^L)}e^{\frac\la2a^\top B^{-1}a}(\det B)^{-1/2}.\ee
This proves~\eqref{logIJ}.\end{proof}

\subsection{Structure of terms in the expansion}\label{sec:terms}
Recall from~\eqref{aB} that $a=\e J_1$ and $B=I_d+2\e J_2$, and consider the two terms in~\eqref{logIJ}. 
We have $\log\det(I_d+2\e J_2)=\tr\log(I_d+2\e J_2)$. Using that $\|2\e J_2\|_\op <1/2$ if $\e$ is sufficiently small (since $J_2=\ef{0\to2}$), we have
\bs
\left\|(I_d+2\e J_2)^{-1}- \sum_{k=0}^{L-2}  (- 2\e J_2)^{k}\right\|_\op&\les \e^{L-1},\\
\left\|\log(I_d+2\e J_2) + \sum_{k=1}^{L-1} \tfrac1k(-2\e J_2)^k\right\|_\op&\les \e^L.
\es 
By Lemma~\ref{lma:op} (see also Remark~\ref{rk:matmul}) the first sum is $\ef{0\to2}$ and the second sum is $\e\ef{0\to2}$. Therefore, 
\bs
J_1^\top(I_d+2\e J_2)^{-1}J_1=&J_1^\top\ef{0\to2}J_1+\CO(\e^{L-1}), \\
\tr\log(I_d+2\e J_2)=&\e\tr\ef{0\to2} +\CO(d\e^L). 
\es
From the Definition~\ref{def:base comp} and since $J_1=\ef{0\to1}$, it is clear that $J_1^\top\ef{0\to2}J_1=\ef{0\to0}=:\mu$, where $\mu$ is just a polynomial in $\e$, i.e. $\mu=\sum_{k\ge0}\mu_k\e^k$. Its coefficients are bounded and depend only on $d$, $\nabla^kf(0)$, and $\nabla^k(\log g)(0)$. We conclude that
\bs\label{lae}
\tfrac12\la\e^2J_1^\top&(I_d+2\e J_2)^{-1}J_1-\tfrac12\log\det (I_d+2\e J_2)\\
&=\la\e^2\mu +\e\tr\ef{0\to2} +\CO(d\e^{L}).
\es
Finally, 
write $\ef{0\to2}=\sum_{\ell\geq0}\e^\ell B_\ell$, for bounded matrices $B_\ell$ depending only on $d$, $\nabla^kf(0)$, and $\nabla^k(\log g)(0)$. Let $a_k=d^{-1}\tr B_{k-1}$, which are bounded. Then
\bs\label{laef}
\la\e^2\mu +\e\tr\ef{0\to2}&=\la\e^2 \mu +\e\sum_{\ell\geq0}\e^{\ell}\tr(B_\ell)+\CO(d\e^{L})\\
&=\la\sum_{k=2}^L \mu_k\e^k +\CO(\la\e^{L+1})+\e\sum_{\ell=0}^{L-2}\e^{\ell}\tr(B_\ell)+\CO(d\e^L)\\
&=\la\sum_{k=2}^L\mu_k\e^k+\sum_{k=1}^{L-1}da_k\e^k+\CO(d\e^{L})=\sum_{k=1}^{L-1}(\mu_{k+1}+a_k)d\e^k+\CO(d\e^{L}).
\es 
To get the second line, we moved the part of the polynomial $\mu=\sum_{k\ge0}\mu_k\e^k$ in which $k\geq L-1$ into the remainder. Similarly, we moved the part of the second sum in which $\ell\geq L-1$ into the remainder. 

Let $b_k(f,g,d)=(\mu_{k+1}+a_k)d^{k+1}$, so that $|b_k(f,g,d)|\les d^{k+1}$. Combining~\eqref{lae} with~\eqref{laef} and using the result in~\eqref{logIJ}, we finally have
\bs
\log I_{\mathrm{in}}(\la)=\sum_{k=1}^{L-1}b_k(f,g,d)\la^{-k}+\CO(R^{2L+2}d\e^L)
\es
This concludes the proof of Lemma~\ref{lma:in}. 
\section{Cumulants and proof of Lemma~\ref{prop:ad}}\label{sec:cum}

In this section, we prove Lemma~\ref{prop:ad}. First, we introduce the concept of cumulants. 
\subsection{Preliminaries on cumulants}
\begin{definition}[Cumulants]\label{def:cum}Let $Y_1,\dots, Y_m\in\br$ be random variables. We define
\be
\cum(Y_1,\dots,Y_m)=(-i)^m\pa_{s_1}\dots\pa_{s_m}\log\E\left[\exp\left(i\left[s_1Y_1+\dots+s_mY_m\right]\right)\right]\big\vert_{s_1=\dots=s_m=0}.
\ee 
See~\cite[Chapter 3.1]{peccati2011}.
\end{definition}
\begin{remark}\label{rk:icum}
    Under suitable integrability conditions (e.g. if $Y_1,\dots,Y_m$ are bounded random variables), the following definition is equivalent: $$\cum(Y_1,\dots,Y_m)=\pa_{s_1}\dots\pa_{s_m}\log\E\left[\exp\left(s_1Y_1+\dots+s_mY_m\right)\right]\big\vert_{s_1=\dots=s_m=0}.$$
\end{remark}

    When there is repetition among the $Y_j$, there is an alternative expression for the cumulant. Specifically, suppose we have another set of random variables $X_k$, $k=1,2,3,\dots$. Let $\alpha=(\alpha_1,\dots,\alpha_M)$ with $\alpha_j\geq0$. Suppose that $Y_1,\dots, Y_m$ consist of $\alpha_1$ copies of $X_1$, $\alpha_2$ copies of $X_2$, and so on, up to $\alpha_M$ copies of $X_M$. Let $|\alpha|=\alpha_1+\dots+\alpha_M=m$. Then 
\bs\label{cum id}
\cum(Y_1,\dots,Y_m)&=\cum\big(\underbrace{X_1,\dots,X_1}_{\alpha_1},\underbrace{X_2,\dots,X_2}_{\alpha_2},\dots,\underbrace{X_M,\dots,X_M}_{\alpha_M}\big)\\
&=(-i)^{|\alpha|}\pa_{u_1}^{\alpha_1}\dots\pa_{u_M}^{\alpha_M} \log\E\left[\exp\left(i[u_1X_1+\dots+u_MX_M]\right)\right]\big\vert_{u_1=\dots=u_M=0}.
    \es This follows from the fact that, for a function $f(s_1,s_2)=g(s_1+s_2)$, it holds $\pa_{s_1}\pa_{s_2}f(0,0) =g''(0)$.
We will only explicitly use cumulants of order one and two, for which it holds
\bs\label{cum12}
\cum(X)&=\E[X],\\
\cum(X,Y)&=\Cov(X,Y)=\E[XY]-\E[X]\E[Y].\es
See~\cite[Example 3.2.3]{peccati2011}. Let 
\be\label{pkdef}
p_k(x)=\nabla^k(\log g)(0)[x^{\otimes k}] -\frac{1}{(k+1)(k+2)}\nabla^{k+2}f(0)[x^{\otimes k+2}],\quad k=1,2,3,\dots,
\ee 
be functions on $\br^d$. 
For example, for $p_1,p_2$, we have
\bs\label{p1p2}
p_1(x) &= \nabla g(0)[x] -\frac16\nabla^3f(0)[x^{\otimes3}],\\
p_2(x) &= \big(\nabla^2 g(0)-\nabla g(0)^{\otimes2}\big)[x^{\otimes2}] -\frac1{12}\nabla^4f(0)[ x^{\otimes4}].
\es

Let $X\in\br^d$ be a random variable. Recall that $\al$ is a multiindex. 
We define
\bs\label{cumpalph}
\cum(p_\alpha(X))&=\cum\big(\underbrace{p_1(X),\dots,p_1(X)}_{\alpha_1},\underbrace{p_2(X),\dots,p_2(X)}_{\alpha_2},\dots,\underbrace{p_M(X),\dots,p_M(X)}_{\alpha_M}\big)\\
&=(-i)^{|\alpha|}\pa_{u}^{\alpha}\log\E\left[\exp\left(i[u_1p_1(X)+\dots+u_Mp_M(X)]\right)\right]\big\vert_{u_1=\dots=u_M=0}.
\es 
Here, the second line is by \eqref{cum id}. Thus, for example, 
\be
\cum(p_{3,1,0,1}(X))=\cum(p_1(X),p_1(X),p_1(X),p_2(X),p_4(X)).
\ee 
For convenience, we recall the formula in Lemma~\ref{prop:ad}, which we will prove in Section~\ref{subsec:cum}.
\be\label{eq:c}
b_{\frac M2}(f,g,d)=b_{\frac M2}(f,g)=\sum_{\substack{\alpha_1,\dots,\alpha_M\geq0\\\sum_{i=1}^Mi\alpha_i=M}}\frac{\cum(p_\alpha(Z))}{\prod_{i=1}^M\alpha_i!(i!)^{\alpha_i}}.\ee The fact that there is no explicit dependence on $d$ is clear from the righthand formula. Indeed, we see from~\eqref{cumpalph} that in $\cum(p_\alpha(Z))$, the only appearance of $d$ is in the functions $p_1(Z),\dots,p_M(Z)$. But we see in~\eqref{pkdef} that the definition of $p_1,\dots,p_M$ is agnostic to the number of arguments the functions $f$ and $g$ have. 
%

\subsection{More on the terms $b_k$}\label{sec:c1}
We first study $b_{M/2}$ to determine the highest-order $f$ derivative contributing to it. This is useful in Section~\ref{sec:meas} below, where we study the derivative order of $b_{L-1}(f,g)-b_{L-1}(f,1)$. We then compute $b_1(f,g)$ explicitly. 

By~\eqref{pkdef}, the highest-order $f$ derivatives in the formula~\eqref{eq:c} for $b_{M/2}$ necessarily arise from $\alpha$ such that $\alpha_M>0$. But since $\sum_{i=1}^Mi\alpha_i=M$, if $\alpha_M>0$ then we must have $\alpha_M=1$ and $\alpha_i=0$ for all $i\neq M$. Thus the highest-order $f$ derivative contribution to $b_{M/2}(f,g)$ is 
\bs\label{hi-f-b}\frac{\cum(p_{0,\dots,0,1}(Z))}{M!}&=\frac{\cum(p_{M}(Z))}{M!}=\frac{\E[p_{M}(Z)]}{M!} \\
&=\frac{\E[\nabla^M(\log g)(0)[Z^{\otimes M}]]}{M!}-\frac{\E[\nabla^{M+2}f(0)[Z^{\otimes M+2}]]}{(M+2)!}.\es

Next, we use~\eqref{eq:c} with $M=2$ to compute $b_1(f,g)$. Only $\alpha=(2,0)$ and $\alpha=(0,1)$ satisfy $\alpha_1+2\alpha_2=2$. Thus
\be\label{c1fg}
b_1(f,g)=\frac{\cum(p_1(Z),p_1(Z))}{2!(1!)^{2}}+ \frac{\cum(p_2(Z))}{1!(2!)^{1}}= \frac12\Var(p_1(Z)) + \frac12\E[p_2(Z)],
\ee using~\eqref{cum12}. Recall $p_1,p_2$ from~\eqref{p1p2}. Let
\be
T_1=\nabla g(0),\quad T_2 =\nabla^2g(0)-\nabla g(0)^{\otimes2},\quad T_3=-\nabla^3f(0)/6,\quad T_4=-\nabla^4f(0)/12.\ee Then $p_1(x)=T_1[x]+T_3[x^{\otimes3}]$ and $p_2(x)=T_2[x^{\otimes2}]+T_4[x^{\otimes4}]$. The polynomial $p_1$ is odd, therefore $\E[p_1(Z)]=0$ and $\Var(p_1(Z))=\E[p_1(Z)^2]$. We now rewrite $p_1$ using Hermite polynomials. For $x=(x_1,\dots,x_d)$ define the first- and third-order multivariate Hermite polynomials~\cite[Chapter 1.3]{bogachev1998gaussian}
$$
H_i(x)=x_i,\qquad 
H_{ijk}(x)
=x_i x_j x_k - x_i\ind\{j=k\} - x_j\ind\{i=k\} - x_k\ind\{i=j\} .
$$
Thus $H_{iii}(x)=x_i^3-3x_i$, $H_{iij}(x)=(x_i^2-1)x_j$ for $i\neq j$, and $H_{ijk}(x)=x_i x_j x_k$ when $i,j,k$ are distinct. Reordering indices in the subscript does not change the polynomial. It is straightforward to check that we may write
\be
p_1(x) = \sum_{i=1}^d\bigg(T_1^i+3\sum_{j=1}^dT_3^{ijj}\bigg)H_i(x) + \sum_{i,j,k=1}^dT_3^{ijk}H_{ijk}(x).
\ee 
The Hermite polynomials are orthogonal in $L^2(\mathcal N(0,I_d))$~\cite[Chapter 1.3]{bogachev1998gaussian}. In other words, if $Z\sim\mathcal N(0, I_d)$, then $\E[H_i(Z)H_j(Z)]=0$ if $i\neq j$, $\E[H_i(Z)H_{jk\ell}(Z)]=0$ for any $i,j,k,\ell$, and $\E[H_{ijk}(Z)H_{\ell mn}(Z)]=0$ if $(i,j,k)\neq (\ell,m,n)$, viewed as unordered triplets. Furthermore, we have $\E[H(Z_i)^2]=1$ and it is straightforward to show $\E[H_{ijk}(Z)^2]=1=6/3!$ if $i,j,k$ are distinct, $\E[H_{iij}(Z)^2]=2=6/3$ if $i,j$ are distinct, and $\E[H_{iii}(Z)^2]=6=6/1$. Thus for general $i,j,k$, the expectation $\E[H_{ijk}(Z)^2]$ is given by $6$ divided by the number of distinct ways to rearrange the indices $i,j,k$.

Using these facts, and the symmetry of the tensor $T_3$, we conclude that
\bs\label{p1}
\Var(p_1(Z))&=\E[p_1(Z)^2] = \sum_{i=1}^d\bigg(T_1^i+3\sum_{j=1}^dT_3^{ijj}\bigg)^2 +6\|T_3\|_F^2\\
=&\|\nabla g(0)-\tfrac12\nabla\Delta f(0)\|^2+\tfrac16\|\nabla^3f(0)\|_F^2.
\es
Next, a straightforward calculation gives
\bs\label{p2}
\E[p_2(Z)] = &\sum_{i=1}^dT_2^{ii} +3\sum_{i,j=1}^dT_4^{iijj}
=\Delta g(0)-\|\nabla g(0)\|^2 -\frac14\Delta^2f(0).
\es
We substitute~\eqref{p1} and~\eqref{p2} in~\eqref{c1fg} to get
\bs\label{b1fg}
b_1(f,g) =&-\frac12\nabla\Delta f(0)^\top\nabla g(0)+ \frac18\|\nabla\Delta f(0)\|^2+\frac1{12}\|\nabla^3f(0)\|_F^2+\frac12\Delta g(0)-\frac18\Delta^2f(0).
\es
\subsection{Proof of Lemma~\ref{prop:ad}}\label{subsec:cum}
Let $t=\la^{-1/2}$, and define
\be\label{I0 t}
I_0(f, g, d, t)=\log I_{\mathrm{in}}(\la) = \log\left\{\Big(\frac{t^{-2}}{2\pi}\Big)^{d/2}\int_{\|x\|\leq t\log(1/t)\sqrt d} g(x)e^{-t^{-2} f(x)}\,\dd x\right\}.
\ee Here, note that we have chosen $R=R(t)=\log(1/t)$.
Let $c^\pm$ be as in
Lemma~\ref{lma:in}. Then this lemma gives that for all $f,g$ satisfying parts~\ref{Ial},~\ref{Ial g} of Assumption~\ref{ass:fg}, and if
\bs\label{Rtd}
R=&\log(1/t)\geq c^-\sqrt{1\vee\frac{2\log(1/t)}{d}},\\
R\sqrt{d}t =&\sqrt d\log(1/t)t \leq \min(c^+,r_0),\es
then
\be\label{Gbt}
\left|I_0(f,g,d,t)- \sum_{k=1}^{L-1}b_k(f,g,d)t^{2k}\right|\leq C(\argc{f},\argc{g}, d)t^{2L}\log^{2L+2}(1/t).
\ee Here, $C(\argc{f},\argc{g},d)$ is a constant depending only on $\argc{f},\argc{g},d$. 

By changing variables as $y=x/t$ in \eqref{I0 t}, it is easy to see that $I_0(f, g, d, t)$ is a smooth function of $t$ in a neighborhood of $t=0$.

To compute $b_k(f,g,d)$, we consider any arbitrary fixed $f,g,d$ such that parts~\ref{Ial},~\ref{Ial g} of Assumption~\ref{ass:fg} are satisfied, and take $t\to0$. The condition~\eqref{Rtd} is satisfied for all $t$ small enough, so~\eqref{Gbt} implies that $b_k(f,g,d)=\frac{1}{(2k)!}\pa_t^{2k}I_0(f,g,d,t)\vert_{t=0}$ for all $k=1,\dots,L-1$. Here, we have used that $t^{2L}\log^{2L+2}(1/t)=o(t^{2L-1})$, and $\pa_t^{2k}$ means the partial derivative with respect to the fourth argument, keeping the first three frozen at fixed values. More precisely, if $f$ and $g$ depend on $\la$, that $\la$ is held fixed when computing the derivatives.
Now that we have this expression for $b_k$, we are free to substitute any $f,g,d$ for which~\eqref{Gbt} is applicable, including $\la$-dependent $f,g,d$.

Let $I_0(t)$ be shorthand for $I_0(f,g,d,t)$ for a fixed $f,g,d$. We have $b_{M/2}(f,g,d)=I_0^{(M)}(0)/(M)!$. To compute this derivative, we modify $I_0$ to create new functions $I_1,I_2$, each of which differs from the previous one by $o(t^M)$. We will then show that $I_2(t) = \sum_{k=1}^M \tilde b_kt^{k}+o(t^M)$ for explicit $\tilde b_k$. This implies $b_{M/2}=\tilde b_M$.\\

Define the set
\be
A=\{z\in\br^d\,:\,\|z\|\leq\log(1/t)\sqrt d \}.
\ee 
Using Assumption~\ref{ass:fg}, we have that $g(tz)>0$ for all $z\in A$ provided $t$ is small enough, since $t\log(1/t)\sqrt d\to0$ as $t\to0$. We can therefore define
\be
F_g(t,z)=\log g(tz) -\left(t^{-2}f(tz)-\|z\|^2/2\right),\quad z\in A.
\ee for $t$ small enough. Thus $I_0(t)=\log\E[ \exp( F_g(t,Z))\ind_A(Z)]$ for a standard Gaussian $Z$ in $\br^d$. We now replace $F_g$ by its Taylor expansion in $t$ about $t=0$.

\begin{lemma}\label{lma:G1G2} Let the $p_k$ be as in~\eqref{pkdef} and define
\bs
I_1(t)=\log\E\left[ \exp(\tilde F_g(t,Z))\ind_A(Z)\right],\quad \tilde F_g(t,z):= \sum_{k=1}^{M}\frac{t^{k}}{k!}p_k(z),
\es 
for $M\leq 2L-1$. Then $(I_0-I_1)(t)=o(t^{M})$.
\end{lemma}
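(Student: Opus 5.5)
We compare the two exponents pointwise on $A$ and convert this into a bound on the logarithms. First, $\tilde F_g(t,z)$ is exactly the degree-$M$ Taylor polynomial of $F_g(t,z)$ in $t$ at $t=0$. Indeed, $t^{-2}f(tz)-\|z\|^2/2=\sum_{k\ge3}\frac{t^{k-2}}{k!}\nabla^kf(0)[z^{\otimes k}]+t^{-2}\mathcal R_{2L+2}(tz)$. The coefficient of $t^k$ is then $\frac{1}{(k+2)!}\nabla^{k+2}f(0)[z^{\otimes k+2}]=\frac{1}{k!}\frac{1}{(k+1)(k+2)}\nabla^{k+2}f(0)[z^{\otimes k+2}]$. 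Likewise $\log g(tz)=\sum_k\frac{t^k}{k!}\nabla^k(\log g)(0)[z^{\otimes k}]+\mathcal R^{(g)}_{2L}(tz)$. Together these match~\eqref{pkdef}.

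\textbf{Pointwise bound on $A$.} For $z\in A$ we have $\|tz\|\le t\log(1/t)\sqrt d\le r_0$ for small $t$. The remainders in~\eqref{Fk norms} and~\eqref{Gk norms} then give
\[
|F_g(t,z)-\tilde F_g(t,z)|\le \sum_{k=M+1}^{2L-1}\frac{t^k}{k!}|p_k(z)|+c_f t^{2L}\|z\|^{2L+2}+c_g t^{2L}\|z\|^{2L},
\]
which uses $M\le 2L-1$ so that the expansions through order $2L-1$ are available. Since $|p_k(z)|\les \|z\|^k+\|z\|^{k+2}$ and $\|z\|\le \sqrt d\log(1/t)$ on $A$, the right-hand side is at most $\delta(t):=C(d)\,t^{M+1}\log^{2L+2}(1/t)$. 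Here $d$ is fixed and only $t\to0$ matters, so $\delta(t)=o(t^M)$ and $\delta(t)\to0$.

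\textbf{Passing to the logarithm.} The pointwise bound gives $e^{-\delta(t)}\exp(\tilde F_g)\le \exp(F_g)\le e^{\delta(t)}\exp(\tilde F_g)$ on $A$. Multiplying by $\ind_A(Z)$ and taking expectations yields $|I_0(t)-I_1(t)|\le\delta(t)=o(t^M)$. This step needs both expectations to be finite and positive. Finiteness is automatic because $A$ is bounded and the integrands are continuous. Positivity holds because $\P(Z\in A)>0$ and the exponentials are positive. Positivity of $g(tz)$ on $A$, which is needed to define $F_g$ at all, was already noted above.

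\textbf{Main obstacle.} The only delicate point is that the remainder bounds hold only for $\|x\|\le r_0$. This is guaranteed by the choice of the truncation radius $t\log(1/t)\sqrt d\to0$. The polynomial growth in $\|z\|$ costs only logarithmic factors, and these are harmless against the extra power $t^{M+1}$ versus $t^M$. Everything else is routine bookkeeping of the Taylor coefficients.
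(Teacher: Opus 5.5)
Your proof is correct and follows essentially the paper's route. You bound $\sup_{z\in A}|F_g(t,z)-\tilde F_g(t,z)|\lesssim t^{M+1}\log^{2L+2}(1/t)$ via the Taylor remainders, which are valid because $t\log(1/t)\sqrt d\to0$, and then pass to the logarithm. The only difference is in that last step: the paper writes $I_0-I_1=\log(1+\delta(t))$ as a ratio and additionally uses $\sup_A|\tilde F_g|\lesssim1$, whereas your two-sided sandwich gives $|I_0-I_1|\le\sup_A|F_g-\tilde F_g|$ directly, without needing that extra bound.
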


Here and below in this section, the constant factors absorbed in small-$o$, big-$\CO$, and $\lesssim$ may depend on any parameter other than $t$. Note that $\tilde F_g(t,z)$ is precisely the $M$th Taylor polynomial of $F_g(t,z)$ in $t$ at $t=0$, and this is how the $p_k$ are constructed.

\begin{proof}
Let $F_g$ and $\tilde F_g$ be shorthand for $F_g(t,Z)$ and $\tilde F_g(t,Z)$, respectively. We have $I_0(t)-I_1(t)=\log(1+\delta(t))$, where
\be
\delta(t)=\frac{\E[e^{\tilde F_g}(e^{F_g-\tilde F_g}-1)\ind_A]}{\E[e^{\tilde F_g}\ind_A]}.
\ee
Using parts~\ref{Ial},~\ref{Ial g} of Assumption~\ref{ass:fg} 
and the definition \eqref{pkdef}, we have $t^k|p_k(z)|\lesssim t^k\log^{k+2}(1/t)$ and $|\mathcal{R}_{2L}^{(g)}(tz)+t^{-2}\mathcal{R}_{2L+2}(tz)|\lesssim t^{2L}\log^{2L+2}(1/t)$ for all $\|z\|\leq \log(1/t)\sqrt d$, provided $t$ is small enough. Thus
\bs
\sup_{\norm{z}\leq \log(1/t)\sqrt d}|&F_g(t,z)-\tilde F_g(t,z)|=\sup_{\norm{z}\leq \log(1/t)\sqrt d}\left|\sum_{k=M+1}^{2L-1}\frac{t^k}{k!}p_k(z) +\mathcal{R}_{2L}^{(g)}(tz)+t^{-2}\mathcal{R}_{2(L+1)}(tz)\right| \\
&\lesssim \sum_{k=M+1}^{2L}t^k\log^{k+2}(1/t)\lesssim t^{M+1}\log^{2L+2}(1/t)=o(t^M).\es
Similarly, 
\be
\sup_{\norm{z}\leq \log(1/t)\sqrt d}|\tilde F_g(t,z)|\lesssim t\log^{M+2}(1/t)\lesssim 1
\ee
for all $t$ small enough. Thus 
\bs
|\delta(t)|\leq \frac{e^{C(M)}\left|\exp\left(o(t^M)\right)-1\right|}{e^{-C(M)}}=o(t^M).
\es
Since $I_0(t)-I_1(t)=\log(1+\delta(t))$, we conclude $|I_0(t)-I_1(t)|= o(t^M)$ as well.
\end{proof}

\begin{lemma}\label{lma:G2G3}
Let $X(t)$ be the random vector given by the truncation of $Z\sim\mathcal N(0, I_d)$ to the region $\{\norm{x}\leq \log(1/t)\sqrt d\}$. Let $I_2(t)= \log\E[\exp(\tilde F_g(t, X(t)))]$. Then $(I_1-I_2)(t)=o(t^M)$.
\end{lemma}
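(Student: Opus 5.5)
The plan is to observe that $I_1$ and $I_2$ differ only by the normalization of the truncated Gaussian. By definition, $X(t)$ has law $\mathcal N(0,I_d)$ conditioned on $A=\{\|z\|\le \log(1/t)\sqrt d\}$. Hence for any integrable $h$,
\[
\E[h(X(t))]=\frac{\E[h(Z)\ind_A(Z)]}{\P(Z\in A)} .
\]
Applying this with $h=\exp(\tilde F_g(t,\cdot))$ gives the exact identity
\[
I_1(t)-I_2(t)=\log \P(Z\in A)=\log\bigl(1-\P(\|Z\|>\log(1/t)\sqrt d)\bigr).
\]
Both sides are well defined because $\tilde F_g$ is a polynomial in $z$, so it is bounded on the compact set $A$. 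Moreover, $\P(Z\in A)>0$, so the logarithms are finite.

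The remaining step is to show that the Gaussian tail probability is $o(t^M)$. I will use the standard concentration bound for the Euclidean norm of a standard Gaussian vector, $\P(\|Z\|\ge \sqrt d+r)\le e^{-r^2/2}$. This bound was already used in the proof of Lemma~\ref{lma:square}. Take $r=(\log(1/t)-1)\sqrt d$, which is positive for small $t$. This gives
\[
\P(Z\notin A)\le \exp\bigl(-(\log(1/t)-1)^2 d/2\bigr).
\]
Since $d\ge1$ is fixed, the right-hand side decays like $\exp(-c\log^2(1/t))$. This is $o(t^M)$ for every fixed $M$, because $t^{-M}e^{-c\log^2(1/t)}=\exp(M\log(1/t)-c\log^2(1/t))\to0$.

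Finally, I will use $|\log(1-p)|\le 2p$ for $p\le 1/2$. This yields $|I_1(t)-I_2(t)|\le 2\P(Z\notin A)=o(t^M)$, which proves the lemma. There is no real obstacle here. The only point that needs care is the Gaussian norm tail bound, and I will cite it rather than reprove it. Note that the constants are allowed to depend on $d$, in line with the convention stated after Lemma~\ref{lma:G1G2}.
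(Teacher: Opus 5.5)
Your proof is correct and follows essentially the same argument as the paper. You use the exact identity $I_1(t)-I_2(t)=\log\P(Z\in A)$, the Gaussian norm tail bound $\P(\|Z\|>\log(1/t)\sqrt d)\le \exp(-(\log(1/t)-1)^2 d/2)$, and the inequality $|\log(1-p)|\le 2p$; you also correctly use the complement probability $\P(A^c)$ where the paper's display has a typo, writing $\log(1-\P(A))$ and $2\P(A)$.
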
 
\begin{proof}
Note that $I_2(t)=I_1(t)-\log\P(A)$. Thus, by the standard Gaussian concentration,
\bs
|I_2(t)-I_1(t)| &= \left|\log\left(1-\P(A)\right)\right|\leq 2\P(A)\\
&\leq 2\exp(-(\log(1/t)-1)^2d/2)\leq 2\exp(-C\log^2(1/t)) =o(t^M)
\es 
for every $M$.
\end{proof}

\begin{lemma}\label{lma:G3}
It holds $I_2(t)=\sum_{m=1}^Mt^m\sum_{\alpha:\sum_ii\alpha_i=m}\frac{\cum(p_\alpha(Z))}{\prod_{i=1}^M\alpha_i!(i!)^{\alpha_i}} +o(t^{M})$. 
\end{lemma}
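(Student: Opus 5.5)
We will treat $I_2(t)=\log\E[\exp(\tilde F_g(t,X(t)))]$ as the cumulant generating function of the bounded random vector $(p_1(X(t)),\dots,p_M(X(t)))$, evaluated at $u=(t/1!,t^2/2!,\dots,t^M/M!)$. Set
\be
K_t(u)=\log\E\left[\exp\left(u_1p_1(X(t))+\dots+u_Mp_M(X(t))\right)\right].
\ee
Since $X(t)$ is supported on the ball $\|x\|\le\log(1/t)\sqrt d$, each $p_k(X(t))$ is bounded. Hence $K_t$ is real-analytic in $u$ near $0$, and by Remark~\ref{rk:icum} together with \eqref{cum id}, its Taylor coefficients are $\pa_u^\alpha K_t(0)=\cum(p_\alpha(X(t)))$. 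Writing $K_t(u)=\sum_{|\alpha|\le M}\cum(p_\alpha(X(t)))u^\alpha/\alpha!+\text{(remainder)}$ and substituting $u_i=t^i/i!$, each monomial becomes $t^{\sum_i i\alpha_i}\prod_i(i!)^{-\alpha_i}$. Collecting the terms with $\sum_i i\alpha_i=m\le M$ produces exactly the claimed coefficient structure, except that it involves $\cum(p_\alpha(X(t)))$ rather than $\cum(p_\alpha(Z))$. Every multiindex with $\sum_i i\alpha_i\le M$ has $|\alpha|\le M$, so these terms all appear in the truncated Taylor sum. All other terms carry a power $t^{m}$ with $m>M$.

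\textbf{Step 1: the Taylor remainder is $o(t^M)$.} We bound the remainder of the order-$M$ Taylor expansion of $K_t$ at the point $u(t)$, whose size is $\|u(t)\|\lesssim t$. The difficulty is that the bound on derivatives of $K_t$ depends on $t$ through the support radius. On the segment $[0,u(t)]$, the exponent $\sum_k u_kp_k(X(t))$ is bounded by $\sum_k t^k\log^{k+2}(1/t)\lesssim 1$, as computed in Lemma~\ref{lma:G1G2}. The $(M+1)$st derivatives of $K_t$ are polynomial combinations of tilted moments $\E_\theta[\prod_j p_{k_j}]$. Because the tilt is bounded above and below by constants, these tilted moments are controlled by $\sup_{\|x\|\le\log(1/t)\sqrt d}|p_k(x)|\lesssim\log^{k+2}(1/t)$. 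The remainder is therefore $\lesssim t^{M+1}\mathrm{polylog}(1/t)=o(t^M)$. The same reasoning disposes of the terms with $|\alpha|\le M$ but $\sum_i i\alpha_i>M$, since their cumulants are also at most polylogarithmic in $1/t$.

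\textbf{Step 2: replace $X(t)$ by $Z$.} Here we must show $\cum(p_\alpha(X(t)))-\cum(p_\alpha(Z))=o(1)$; in fact it is $O(t^N)$ for every $N$. By the moment--cumulant formula, each cumulant is a fixed polynomial in joint moments $\E[\prod_j p_{k_j}(\cdot)]$ of bounded order. For a polynomial $q$, we have $\E[q(X(t))]=\E[q(Z)\ind_A(Z)]/\P(A)$, where $A$ is the ball. Cauchy--Schwarz together with $\P(A^c)\le\exp(-C\log^2(1/t))$, which is the estimate used in Lemma~\ref{lma:G2G3}, gives $|\E[q(X(t))]-\E[q(Z)]|\lesssim\E[q(Z)^2]^{1/2}\P(A^c)^{1/2}$. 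This is smaller than any power of $t$. The moments of $Z$ are finite and independent of $t$, so the coefficient differences are $o(1)$. Multiplying by $t^m$ with $m\ge1$ then gives an $o(t^M)$ contribution overall.

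I expect Step 1 to be the main obstacle, since the uniform control of the derivatives of $K_t$ must not blow up faster than polylogarithmically. If the tilted-moment argument is awkward, the fallback is to expand $\exp(\tilde F_g)$ directly: use $|e^y-\sum_{j\le M}y^j/j!|\le e^{|y|}|y|^{M+1}$ with $|y|\lesssim t\,\mathrm{polylog}$, then take the logarithm of the resulting moment series and match it to the standard formal mgf-to-cgf relation.
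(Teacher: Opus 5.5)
Your proposal is correct and follows essentially the paper's proof: Taylor-expand the cumulant generating function of $(p_1(X(t)),\dots,p_M(X(t)))$ to order $M$ in $u$, substitute $u_k=t^k/k!$, collect powers of $t$, and then replace $\cum(p_\alpha(X(t)))$ by $\cum(p_\alpha(Z))$ using the moment--cumulant formula, Cauchy--Schwarz and the Gaussian tail bound, which is exactly the paper's Lemma~\ref{lma:XZ}. Your tilted-moment argument in Step 1 gives a remainder of order $t^{M+1}\,\mathrm{polylog}(1/t)$ uniformly in the $t$-dependent law, which makes explicit a point the paper covers only with a bare $\CO(\|u\|^{M+1})$; in Step 2, it is the superpolynomial bound on the coefficient differences, not merely $o(1)$, that makes $t^m$ times them $o(t^M)$ when $m<M$.
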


Combining Lemmas~\ref{lma:G1G2} and~\ref{lma:G2G3} gives $I_0(t)=I_2(t)+o(t^M)$. This implies that the coefficients in front of $t^k$, $k=1,\dots,M$, in~\eqref{Gbt} coincide with the corresponding coefficients in Lemma~\ref{lma:G3}. This concludes the proof of Proposition~\ref{prop:ad}.

To prove Lemma~\ref{lma:G3}, we need an auxiliary result.
\begin{lemma}\label{lma:XZ}
For each fixed $\alpha=(\alpha_1,\dots,\alpha_M)$, with $\alpha_j\geq0$, we have 
\be
|\cum(p_\alpha(X(t)))-\cum(p_\alpha(Z))| =o(t^M).
\ee
\end{lemma}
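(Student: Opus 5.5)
The plan is to reduce the comparison of cumulants to a comparison of finitely many moments, and then to show that each moment of the truncated vector $X(t)$ differs from the corresponding moment of $Z$ by a super-polynomially small amount.

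First, by the moment–cumulant formula (see~\cite[Chapter 3]{peccati2011}), the joint cumulant $\cum(Y_1,\dots,Y_m)$ of random variables with finite moments is a fixed polynomial, with coefficients depending only on $m$, in the mixed moments $\E[\prod_{j\in S}Y_j]$, $S\subseteq\{1,\dots,m\}$. Here $m=|\alpha|$ is fixed and each $Y_j$ is one of $p_1,\dots,p_M$. All moments of $p_k(Z)$ are finite because these are polynomials in a Gaussian vector. The moments of $p_k(X(t))$ are finite because $X(t)$ is bounded. Since the polynomial is fixed, it suffices to show two things for every product $P=\prod_{j\in S}p_{k_j}$ of at most $m$ of the $p_k$'s:
\[
\bigl|\E[P(X(t))]-\E[P(Z)]\bigr|=o(t^M),
\]
and the moments $\E[P(X(t))]$ stay bounded as $t\to0$. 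Boundedness will follow from the first claim together with finiteness of the Gaussian moments. The polynomial map is then locally Lipschitz on bounded sets, which transfers the $o(t^M)$ bound to the cumulants.

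Second, write $\E[P(X(t))]=\E[P(Z)\ind_A(Z)]/\P(Z\in A)$, with $A=\{\|z\|\le\log(1/t)\sqrt d\}$. Then
\[
\E[P(X(t))]-\E[P(Z)]=\frac{\E[P(Z)\ind_A(Z)]-\E[P(Z)]\,\P(A)}{\P(A)}
=\frac{-\E[P(Z)\ind_{A^c}(Z)]+\E[P(Z)]\,\P(A^c)}{\P(A)}.
\]
$P$ is a polynomial of degree at most $D=m(M+2)$, so Cauchy--Schwarz gives
\[
|\E[P(Z)\ind_{A^c}]|\le \E[P(Z)^2]^{1/2}\,\P(A^c)^{1/2}.
\]
As in the proof of Lemma~\ref{lma:G2G3}, Gaussian concentration gives $\P(A^c)\le\exp(-(\log(1/t)-1)^2d/2)$, which is $o(t^N)$ for every $N$. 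Also $\P(A)\ge1/2$ for $t$ small. Since $f,g,d$ are fixed here, the constants $\E[P(Z)^2]$ and $|\E P(Z)|$ are finite and independent of $t$. Hence the difference is $o(t^M)$.

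The only point requiring care is that the constants may depend on $d$ and on the derivatives of $f,g$. This is permitted by the convention stated before Lemma~\ref{lma:G1G2}, under which only the $t$-dependence matters. I therefore expect no real obstacle; the main bookkeeping step is invoking the moment–cumulant formula and noting that it involves only finitely many moments, each uniformly bounded in $t$.
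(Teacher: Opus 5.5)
Your proposal is correct and follows the paper's proof. Both arguments write the cumulant as a fixed polynomial in finitely many moments, then bound each moment difference by $\bigl(\E[P(Z)]\,\P(A^c)-\E[P(Z)\ind_{A^c}]\bigr)/\P(A)$, using $\P(A)\ge 1/2$, Cauchy--Schwarz and Gaussian concentration. The differences are cosmetic: the paper first uses multilinearity to reduce to monomials $Z^b$, while you apply the moment--cumulant formula directly to products of the $p_k$, and you also make explicit the boundedness of moments needed to pass the moment errors through the polynomial.
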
 
See the end of the section for the proof of Lemma~\ref{lma:XZ}.

\begin{proof}[Proof of Lemma~\ref{lma:G3}] 
Write $I_2$  as $I_2(t)=H(t;t,t^2/2!,\dots,t^M/M!)$, where
$$
H(t;u_1,\dots,u_M)=\log\E\exp[u_1p_1(X(t))+\dots+u_Mp_M(X(t))].
$$ 
The function $H$ is $C^\infty$ in $u_1,\dots,u_M$. We Taylor expand $H$ to order $M$ in $u_1,\dots, u_M$. By the definition~\eqref{cumpalph} and Remark~\ref{rk:icum},
\be
\pa_{u_1}^{\alpha_1}\dots\pa_{u_M}^{\alpha_M} H(t;0)=\cum(p_\alpha(X(t))).
\ee 
Thus the Taylor expansion of $H$ takes the form
$$
H(t;u_1,\dots,u_M)=\sum_{1\leq|\alpha|\leq M}\cum(p_\alpha(X(t)))\prod_{k=1}^Mu_k^{\alpha_k}/\alpha_k! +\mathcal O(\|u\|^{M+1}).
$$ 
Substituting $u_k=t^k/k!$ gives
\bs\label{G3deriv}
I_2(t)&=H(t;t,t^2/2!,\dots,t^M/M!)\\
&=\sum_{1\leq|\alpha|\leq M}\cum(p_\alpha(X(t)))\prod_{k=1}^M(t^k/k!)^{\alpha_k}\frac{1}{\alpha_k!} +o(t^{M})\\
&=\sum_{m=1}^Mt^m\sum_{\alpha:\sum_kk\alpha_k=m}\frac{\cum(p^\alpha(X(t)))}{\prod_{k=1}^M\alpha_k!(k!)^{\alpha_k}} +o(t^{M})\\
&=\sum_{m=1}^Mt^m\sum_{\alpha:\sum_kk\alpha_k=m}\frac{\cum(p^\alpha(Z))}{\prod_{k=1}^M\alpha_k!(k!)^{\alpha_k}} +o(t^{M}).
\es 
To get the third line, we grouped like powers of $t$. To get the fourth line, we used Lemma~\ref{lma:XZ}. This concludes the proof. 
\end{proof}

The third line of~\eqref{G3deriv} gives the first $M$ terms in the Taylor series expansion of 
\be
\log\E[\exp(tp_1(X)+t^2p_2(X)/2!+\dots+t^Mp_M(X)/M!)], 
\ee
which is well-defined. Thus the fourth line of~\eqref{G3deriv} gives the first $M$ terms in the \emph{formal} power series expansion of $\log\E[\exp(tp_1(Z)+t^2p_2(Z)/2!+\dots+t^Mp_M(Z)/M!)]$, as claimed in Remark~\ref{rk:Zpower}.
\begin{proof}[Proof of Lemma~\ref{lma:XZ}]
Since cumulants are multilinear, and since the coefficients of the $p_k$'s are entries of the derivative tensors $F_{k+2}$, which are uniformly bounded, it suffices to show that
$|\cum(X(t)^{b_1},\dots,X(t)^{b_n})-\cum(Z^{b_1},\dots,Z^{b_n})|=o(t^M)$ for $n$ arbitrary multi-indices $b_i=(b_i^1,\dots,b_i^d)$, $i=1,\dots,n$. Here, $Z^{b_i}=Z_1^{b_i^1}\dots Z_d^{b_i^d}$, and similarly for $X(t)^{b_i}$. But $\cum(X(t)^{b_1},\dots,X(t)^{b_n})$ is a polynomial function of moments $\E[X(t)^b]$~\cite[(3.2.11)]{peccati2011}. Thus it suffices to bound $|\E[X(t)^b]-\E[Z^b]|$ for any multi-index $b$. We have
\bs
\E[X(t)^b]-\E[Z^b]&=\P(A)^{-1}\E[Z^b\ind_A]-\E[Z^b]\\
&=(\E[Z^b]\P(A^c)-\E[Z^b\ind_{A^c}])/\P(A).
\es 
Since $\P(A)\geq1/2$ (for $t$ sufficiently small) and $|\E[Z^b]|\leq \E[\|Z\|^{|b|}]\leq C(d, |b|)$, and using Cauchy-Schwarz, we have
\bs
\big|\E[X(t)^b]-\E[Z^b]\big|&\leq C(d,|b|)\P(A^c)^{1/2} \leq C(d,|b|)\exp(-(R(t)-1)^2d/4)\\
&\leq \exp(-C\log^2(1/t)),
\es recalling $R(t)=\log(1/t)$. As before, $\exp(-C\log^2(1/t))=o(t^{M})$ for every fixed $M$. Thus $|\E[X^b]-\E[Z^b]|=o(t^{M})$ for every $M$, and therefore $|\cum(X(t)^{b_1},\dots,X(t)^{b_n})-\cum(Z^{b_1},\dots,Z^{b_n})|=o(t^M)$ as well. Thus, $|\cum(p_\alpha(X(t)))-\cum(p_\alpha(Z))| =o(t^M)$ for each $\alpha$. 
\end{proof}

\section{Proof of Lemma~\ref{lma:out}}\label{sec:out}
We prove an upper bound on $|I_{\mathrm{out}}(\la)|$ and a lower bound on $I_{\mathrm{in}}(\la)$. 
\paragraph*{Lower bound on $I_{\mathrm{in}}(\la)$.} Assume $R\sqrt\e\leq r_0$. From \eqref{eq:Taylor-f} and \eqref{Fk norms} it follows that
\bs\label{f upbnd}
f(x)&\le \Big(1+\tfrac13\sup_{\|u\|\leq r_0}\|\nabla^3f(u)\|_\op R\sqrt\e\Big)\|x\|^2/2\\
&\leq(1+\argc{f}R\sqrt\e)\norm{x}^2/2,\quad \norm{x}\le R\sqrt\e.
\es We let $\mu=1+\argc{f}
R\sqrt\e$. Also, since $g(0)=1$ and $\log g$ has bounded first derivative for $\|x\|\leq R\sqrt\e<r_0$ by~\ref{Ial g} of Assumption~\ref{ass:fg}, we can choose $R\sqrt\e$ small enough that $g(x)\geq0.8$ for all $\|x\|\leq R\sqrt\e$. (Recall that the second inequality of~\eqref{R ineq v1} allows us to choose $R$ small enough.) Thus,
\be\label{inlb}
I_{\mathrm{in}}(\la)\geq 0.8\Big(\frac{\la}{2\pi}\Big)^{d/2}\int_{\|x\|\leq R\sqrt\e} e^{-\la \mu\norm{x}^2/2}\,\dd x=0.8\mu^{-d/2}\mathbb P\big(\|Z\|\leq R\sqrt{\mu}\sqrt d\,\big)\geq \frac12 \mu^{-d/2}.
\ee  To get the last inequality we used that $R\sqrt{\mu}>2$ and therefore $\mathbb P(\|Z\|\leq R\sqrt{\mu}\sqrt d)\geq \mathbb P\big(\|Z\|\leq 2\sqrt d\,\big)\geq 1-\exp(-d/2)\geq 0.63$ when $d\geq2L\geq2$.
\paragraph*{Upper bound on $|I_{\mathrm{out}}(\la)|$.}
Let
\bs
T_1&=\Big(\frac{\la}{2\pi}\Big)^{d/2}\int_{\br^d} (1+\|x\|)^{-\kappa\la d^{-p}}\dd x,\\
T_2&=\Big(\frac{\la}{2\pi}\Big)^{d/2}\int_{\norm{x}\ge R\sqrt\e} e^{-\kappa\sqrt{\la d}\|x\|}\,\dd x,\\
T_3&=\Big(\frac{\la}{2\pi}\Big)^{d/2}\int_{\norm{x}\ge R\sqrt\e} e^{-\la\kappa\|x\|^2/2}\,\dd x.
\es
Using~\eqref{eq:coercive gf}, we know that $|I_{\mathrm{out}}(\la)|\leq T_1+T_2+T_3$ for $p=1/(2L)$. Thus it suffices to upper bound $T_1$, $T_2$, $T_3$. We leave $p$ unspecified for now to show where the choice $p=1/(2L)$ comes from. Write $a=\kappa \la d^{-p}$. Using spherical coordinates and assuming $a>d$ gives
\be
T_1 \le
\frac{\la^{d/2}}{2^{(d/2)-1}\Gamma(d/2)}\int_0^\infty (1+r)^{-a}r^{d-1}\,\dd r
=
(\la/2)^{d/2}\,\frac{2\Gamma(d)}{\Gamma(d/2)}\cdot \frac{\Gamma(a-d)}{\Gamma(a)}.
\ee
We have 
\be
\frac{\Gamma(a-d)}{\Gamma(a)}
= \frac{1}{(a-d)(a-d+1)\cdots(a-1)}
\le
\Big(\frac{2}{a}\Big)^d,\quad
\frac{\Gamma(d)}{\Gamma(d/2)}\le (2d/e)^{d/2}.
\ee
We assumed $a\ge 2d$ in the first inequality and used the Stirling formula in the second one. Then
\be
T_1\leq
2\Big(\frac \la2\Big)^{d/2}\Big(\frac{2d}e\Big)^{d/2}\Big(\frac{2}{a}\Big)^d
=2\Big(\frac4e\frac{d^{2p+1}}{\kappa^2 \la}\Big)^{d/2}.
\ee
We now choose $p$ so that $d^{2p+1}/\la$ is small whenever the bound on $I_{\mathrm{in}}$ from Lemma~\ref{lma:in} is small. Thus we take $2p+1=(L+1)/L$, i.e. $p=1/(2L)$. This gives
\be\label{T1}
T_1 \leq 2\Big(\frac4e\frac{d^{(L+1)/L}}{\kappa^2 \la}\Big)^{d/2}.
\ee
Returning to the condition $a\geq 2d$, with $a=\kappa\la d^{-1/(2L)}$, this is satisfied if $d^{1+1/(2L)}/\la < \kappa/2$. But this is implied by the conditions $R^2\kappa>R\kappa>2$ and $R^{2(L+1)}d^{L+1}/\la^L<1/2$. These latter conditions are satisfied for appropriate choices of $c^\pm$ in~\eqref{R ineq v1}.

Next, using spherical coordinates to compute $T_2$, we have
\bs
T_2=&\frac2{2^{d/2}\Gamma(d/2)}\int_{Rd^{1/2}}^\infty e^{-\kappa d^{1/2}r}r^{d-1}\dd r
\leq  d^{1/2}e^{d/2}\int_R^\infty r^{d-1}e^{-\kappa dr}\dd r.
\es 
Let $f(r)=(d-1)\log r -\kappa dr$, so that $r^{d-1}e^{-\kappa dr}=e^{f(r)}$. Note that $f''(r)<0$ for all $r>0$, so $f'(r)$ is decreasing. Therefore, $f(r)\leq f(R)+f'(R)(r-R)$ for $r\geq R$. We have $e^{f(R)}=R^{d-1}e^{-\kappa dR}$ and $f'(R)=(d-1)/R-\kappa d$. We thus obtain
\bs\label{T2}
T_2 &\leq d^{1/2}e^{d/2}e^{f(R)}\int_R^\infty e^{f'(R)(r-R)}\dd r = \frac{d^{1/2}e^{d/2}}{-f'(R)}e^{f(R)}\\
& = \frac{d^{1/2}e^{d/2}R^de^{-\kappa dR}}{R\kappa d-(d-1)}\leq e^{-d[\kappa R-1/2-\log R]}.
\es
To get the last inequality we used $R\kappa d-(d-1)\geq d^{1/2}$, again by assuming $R\kappa\geq2$. 

Finally, using a Gaussian concentration inequality, we have
\bs\label{T3}
T_3&=\kappa^{-d}\frac{1}{(2\pi)^{d/2}}\int_{\|y\|\geq\kappa R\sqrt d}e^{-\|y\|^2/2}\dd y\leq \kappa^{-d}\exp\left(-\tfrac{d}{2}(\kappa R-1)^2\right)\\
&\leq\exp\left(d\left[\log\tfrac1\kappa-\tfrac18(\kappa R)^2\right]\right)\leq e^{-\frac{d}{16}(\kappa R)^2}.\es
To get the first inequality in the second line, we again used $R\kappa \geq2$. To get the second inequality in the second line, we used $(R\kappa)^2\geq 16\log\frac1\kappa$, which is satisfied by choosing $c^-\geq4$ in~\eqref{R ineq v1}.

Combining~\eqref{T1},~\eqref{T2}, and~\eqref{T3} gives
\bs\label{outub}
|I_{\mathrm{out}}(\la)| &\leq 2\Big(\frac4e\frac{d^{(L+1)/L}}{\kappa^2 \la}\Big)^{d/2} + e^{-d[\kappa R-1/2-\log R]}+e^{-\frac{d}{16}(\kappa R)^2}\\
&\leq 2\Big(\frac4e\frac{d^{(L+1)/L}}{\kappa^2 \la}\Big)^{d/2}+2e^{-d[\kappa R-1/2-\log R]}.
\es To get the second line we used that $\kappa R-1/2-\log R\leq \kappa R-1/2\leq (\kappa R)^2/16$, true for large enough $\kappa R$. We now finish the proof of~\eqref{out} using~\eqref{outub} and~\eqref{inlb}. 

\begin{proof}[Proof of~\eqref{out}]
~\eqref{outub} and~\eqref{inlb} give
\bs\label{Ioutin-0}
\frac{|I_{\mathrm{out}}(\la)|}{I_{\mathrm{in}}(\la)}&\leq 4\Big(\frac{4\mu}{e}\frac{d^{(L+1)/L}}{\kappa^2 \la}\Big)^{d/2} + 4e^{-d\big[\kappa R-\frac12-\frac12\log\mu-\log R\big]}.
\es 
Recall $\mu=1+\argc{f}R\sqrt\e$. By taking $c^+$ small enough in~\eqref{R ineq v1} we can ensure $\mu\leq e/2\leq e$. Thus we obtain the further bound
\bs\label{Ioutin}
\frac{|I_{\mathrm{out}}(\la)|}{I_{\mathrm{in}}(\la)}&\leq 4\Big(2\frac{d^{(L+1)/L}}{\kappa^2 \la}\Big)^{d/2} + 4\e^{-d[\kappa R-1-\log R]}\\
&\leq
4\Big(R^2\frac{d^{(L+1)/L}}{\la}\Big)^{L}+4\la^{-L}.
\es
To get the second inequality, we used that $2/\kappa^2\leq R^2$ and $R^2d^{(L+1)/L}/\la\leq 1$ by~\eqref{R ineq v1}, and that $d\geq2L$. We also used that $\kappa R-1-\log R\geq L(\log\la)/d$ by~\eqref{R ineq v1}, by choosing $c^-\geq L$. Finally, $R\ge 1$ and $d^{L+1}\ge4$ imply that the term $4\la^{-L}$ in \eqref{Ioutin} can be absorbed into the first term on the right-hand side of the last inequality, yielding
\be\label{Ioutin-1}
\frac{|I_{\mathrm{out}}(\la)|}{I_{\mathrm{in}}(\la)}\leq 5R^{2L}\frac{d^{L+1}}{\la^L}.
\ee
\end{proof}

\section{Sampling and expectations for Laplace-type densities }\label{sec:meas}
In statistical applications, the target of study is not a Laplace-type integral but a Laplace-type \emph{probability density},
\be
\pi(x)=\frac{e^{-\la f(x)}}{\int_{\br^d} e^{-\la f(x')}\dd x'}.
\ee 
Specifically, one is interested in obtaining the following quantities:
\begin{enumerate}
    \item i.i.d. samples $X_1,\dots,X_N\sim\pi$,
    \item Expectations $\E_{X\sim\pi}[g(X)]$.
\end{enumerate}
In Section~\ref{subsec:gen:meas} we present our general results for approximating these quantities to arbitrary order $L$. In Section~\ref{subsec:measL12} we specialize to the case $L=1$ and $L=2$.
\subsection{Results for general $L$}\label{subsec:gen:meas}
We start with the problem of computing expectations for \emph{smooth} functions $g$. We can do this using Theorem~\ref{thm:main res}. Indeed, we have $\E_{X\sim\pi}[g(X)]=\int ge^{-\la f}/\int e^{-\la f}$, and each integral can be approximated by~\eqref{main res}.
\begin{theorem}\label{thm:meas-smooth}
     Fix any $L\ge 1$ and suppose $d\geq 2L$. Suppose $f$ satisfies~\ref{Ial} of Assumption~\ref{ass:fg}, as well as~\eqref{f-suff} for some $\kappa_f>0$. Fix some $\argc{g}>0$ and $0<\kappa_g<\kappa_f$. Let $\mathcal G_L(\argc{g},\kappa_g)$ be the class of functions $g$ satisfying~\ref{Ial g} of Assumption~\ref{ass:fg} and~\eqref{g-suff}. Then for $R,d,\la$ as in Theorem~\ref{thm:main res}, with $\kappa=\kappa_f-\kappa_g$ in~\eqref{R ineq v1}, it holds
     \bs\label{supgG}
    \sup_{g\in\mathcal G_L(\argc{g},\kappa_g)}
    \bigg|\E_{X\sim\pi}[g(X)]-&\exp\bigg(\sum_{k=1}^{L-1}[b_k(f,g)-b_k(f,1)]/\la^k\bigg)\bigg|\leq C(\argc{g},\argc{f})\frac{(R^2d)^{L+1}}{\la^L}.
     \es 
 \end{theorem}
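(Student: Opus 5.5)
We apply Theorem~\ref{thm:main res} twice, once to the pair $(f,g)$ and once to the pair $(f,1)$, and then take the ratio. Write $I_g(\la)$ for~\eqref{main int v0} with observable $g$, so that $\E_{X\sim\pi}[g(X)]=I_g(\la)/I_1(\la)$; the normalizing factor $(\la/2\pi)^{d/2}$ cancels.

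\textbf{Step 1: the pair $(f,1)$.} This pair satisfies Assumption~\ref{ass:fg}. Parts~\ref{Ial} and~\ref{Ial g} are immediate, since $\log 1=0$. For~\eqref{eq:coercive gf}, the sufficient condition~\eqref{f-suff} gives it with constant $\kappa_f\ge\kappa$ and with $\argc{g}$ replaced by $0$.

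\textbf{Step 2: the pair $(f,g)$ for $g\in\mathcal G_L(\argc{g},\kappa_g)$.} Parts~\ref{Ial g} hold by definition of the class. For part (3) we check~\eqref{eq:coercive gf} with $\kappa=\kappa_f-\kappa_g$ from~\eqref{f-suff} and~\eqref{g-suff}.
\begin{itemize}
\item For $\|x\|\ge r_0$, multiply the bounds termwise. Since $|g|$ is bounded by each of the three expressions, $\log|g|-\la f\le \la\kappa_g m(x)-\la\kappa_f m(x)$, where $m(x)$ is the minimum in~\eqref{f-suff}. To see this, pick the argument realizing the minimum for $f$ and bound $|g|$ using the same argument. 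Note that $\kappa_g\sqrt{d\la}\|x\|=\la\kappa_g\sqrt\e\|x\|$ and $(1+\|x\|)^{\la\kappa_g d^{-1/(2L)}}$ are exactly matched to $\la\kappa_g$ times the corresponding entries of $m$.
\item For $\|x\|<r_0$, we have $|g(x)|\le e^{\argc{g} r_0\cdots}$ by the Taylor bound~\eqref{eq:Taylor-g}. Here one may need to enlarge the constant $\argc{g}$ in~\eqref{eq:coercive gf} to some $C(\argc{g},r_0)$, possibly also absorbing a harmless piece of $\la\kappa_g\|x\|^2/2$. This is the one fiddly point; if necessary we simply note that constants may depend on $\argc{g}$.
\end{itemize}

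\textbf{Step 3: uniformity and the logarithmic comparison.} The constants $c^\pm$ and the implied constant in~\eqref{main res} depend only on $\argc{f},\argc{g}$. Hence, under~\eqref{R ineq v1}, both applications of Theorem~\ref{thm:main res} hold uniformly over the class. We obtain $I_g,I_1>0$ and
\[
\log\E_\pi[g]=\sum_{k=1}^{L-1}[b_k(f,g)-b_k(f,1)]\la^{-k}+\delta,\qquad |\delta|\les (R^2d)^{L+1}/\la^L .
\]

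\textbf{Step 4: exponentiate.} Put $S=\sum_{k=1}^{L-1}[b_k(f,g)-b_k(f,1)]\la^{-k}$. Then
\[
|e^{S+\delta}-e^{S}|=e^S|e^\delta-1|\le 2e^S|\delta|
\]
once $|\delta|\le1$, which follows by shrinking $c^+$. It remains to show $e^S$ is bounded uniformly. This is the main obstacle, because each $b_k$ individually is of size $d^{k+1}$, and $d^2/\la$ need not be small. The key observation is that the difference $b_k(f,g)-b_k(f,1)$ is smaller, of order $d^k$. I would verify this from the cumulant formula of Lemma~\ref{prop:ad}. By multilinearity of cumulants, write $p_i=p_i^{f}+p_i^{g}$ with $p_i^g(x)=\nabla^i\log g(0)[x^{\otimes i}]$. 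Every cumulant in the difference then contains at least one $p^g$ entry, and one expects this to reduce the dimension count by a factor $d$. For example, in~\eqref{b1fg} the $g$-terms are $O(d)$, and by Cauchy--Schwarz $\nabla\Delta f(0)^\top\nabla g(0)=O(\sqrt d)$.

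A cleaner route that avoids the combinatorics is to run the proof of Lemma~\ref{lma:in} once more, but with $g$ only entering at order $\la^0$. In $E_L$, the $g$-contributions to $\la a$ and $\la B$ carry no factor of $d$. The difference of the completed-square formulas~\eqref{logIJ} is then $\la a_g^\top(\cdots)+\tr(\cdots)$ with $a_g=O(1/\la)$-type corrections. This gives $|S|\les \sum_k d^k/\la^k\les 1$, and hence $e^S\les1$ uniformly in $g$.

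Combining Steps 3 and 4 yields~\eqref{supgG}.
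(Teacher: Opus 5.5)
Your overall strategy, applying Theorem~\ref{thm:main res} to $(f,g)$ and to $(f,1)$ and taking the ratio, is the same as the paper's, and Steps 1--3 are fine. The small-ball case in Step 2 only needs $\log g\lesssim 1$ on $\{\|x\|<r_0\}$, which the paper also takes for granted.

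The gap is in Step 4. Factoring $|e^{S+\delta}-e^S|=e^S|e^\delta-1|$ makes the proof depend on a uniform bound $e^S\lesssim 1$, and you support that bound only heuristically:
\begin{itemize}
\item The cumulant dimension count is stated as something ``one expects'', not proved.
\item The ``cleaner route'' would require re-running every iteration of Lemma~\ref{lma:changevar-m} and the completing-the-square step, while tracking the $\la^{-1}$-terms coming from $g$ separately from the $\e$-terms coming from $f$. You do not carry this out.
\item The sample estimate $\nabla\Delta f(0)^\top\nabla g(0)=O(\sqrt d)$ is false in general. Take $\nabla^3f(0)[x^{\otimes3}]=c\,(u^\top x)\|x\|^2$ with $\|u\|=1$, which has operator norm $c$. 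Then $\nabla\Delta f(0)=\tfrac{c(d+2)}{3}u$, so with $\nabla g(0)=u$ the product is of order $d$. Order $d$ still fits your claimed $O(d^k)$ at $k=1$, but it shows the count is not automatic.
\end{itemize}

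No information about the size of $b_k(f,g)-b_k(f,1)$ is actually needed. Since $e^{S+\delta}=\E_{X\sim\pi}[g(X)]=I^g/I^1$ exactly, put the true ratio in front instead of $e^S$:
\[
|e^{S+\delta}-e^S|=(I^g/I^1)\,|1-e^{-\delta}|\le 2|\delta|\,I^g/I^1,
\]
or equivalently note $e^S=e^{-\delta}I^g/I^1\le e\,I^g/I^1$.

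The ratio $I^g/I^1$ is bounded directly. Because $I^1_{\mathrm{out}}\ge 0$ and $|I^g_{\mathrm{in}}|\le \sup_{\|x\|\le R\sqrt\e}|g(x)|\,I^1_{\mathrm{in}}$, you get
\[
I^g/I^1\le \sup_{\|x\|\le R\sqrt\e}|g(x)|+|I^g_{\mathrm{out}}|/I^1_{\mathrm{in}}.
\]
The first term is $O(1)$ by the Taylor bound on $\log g$ over the small ball. The second is at most $5R^{2L}d^{L+1}/\la^L\le 1$ by the tail estimate \eqref{outub} combined with the lower bound \eqref{inlb} (valid for $g\equiv 1$) from Section~\ref{sec:out}.

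This is exactly how the paper closes the argument. With this replacement your Step 4 goes through, and the dimension-counting detour can be dropped.
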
 
 We have formulated the result uniformly over a function class $\mathcal G$ in order to compare it with our second result below about approximating $\E_{X\sim\pi}[g(X)]$ for nonsmooth functions $g$. Note that functions $g\in\mathcal G$ have $g(0)=1$. This is not restrictive; the theorem also applies for any $cg$, $g\in\mathcal G$, simply by multiplying~\eqref{supgG} through by $c$. 
 
 \begin{remark}[Highest derivative order]\label{rk:deriv-b}We claim that $\sum_{k=1}^{L-1}[b_k(f,g)-b_k(f,1)]/\la^k$ involves $g$ derivatives of order $\leq 2L-2$ and $f$ derivatives of order $\leq 2L-1$.
 
  By Lemma~\ref{prop:ad}, the terms $b_k(f,g)-b_k(f,1)$ for $k\leq L-2$ involve $g$ derivatives of order at most $2L-4$ and $f$ derivatives of order at most $2L-2$. Furthermore,~\eqref{hi-f-b} shows that the highest $f$ derivative appearing in $b_{L-1}(f,g)$ is $\E[\nabla^{2L}f(0)[Z^{\otimes 2L}]]/(2L)!$. But this exact term is also the highest $f$ derivative appearing in $b_{L-1}(f,1)$. Thus it cancels upon subtraction. As a result, $b_{L-1}(f,g)-b_{L-1}(f,1)$ only involves $g$ derivatives of order $\leq 2L-2$ and $f$ derivatives of order $\leq 2L-1$, and these are the highest derivative orders appearing in the sum.
 \end{remark}

 The proof of Theorem~\ref{thm:meas-smooth} is a straightforward application of Theorem~\ref{thm:main res}.
 \begin{proof}[Proof of Theorem~\ref{thm:meas-smooth}]
     First note that for $f$ as in the theorem statement and $g\in\mathcal G_L(\argc{g},\kappa_g)$, as well as $g\equiv1$, the conditions of Theorem~\ref{thm:main res} are satisfied. Let $I^g(\la)=(\la/2\pi)^{d/2}\int ge^{-\la f}$ and $I^1(\la)=(\la/2\pi)^{d/2}\int e^{-\la f}$. Also, let $b^g=\sum_{k=1}^{L-1}b_k(f,g)\la^{-k}$ and $b^1=\sum_{k=1}^{L-1}b_k(f,1)\la^{-k}$. We have
     \bs
\left|\frac{I^g(\la)}{I^1(\la)} - \frac{e^{b^g}}{e^{b^1}}\right| &= \frac{I^g(\la)}{I^1(\la)}\left|1-\frac{e^{b^g-\log I^g(\la)}}{e^{b^1-\log I^1(\la)}}\right| \\
&\leq \frac{I^g(\la)}{I^1(\la)}\left(e^{|b^g-\log I^g(\la)|+|b^1-\log I^1(\la)|}-1\right)\\
&\leq C(\argc{g},\argc{f})\frac{(R^2d)^{L+1}}{\la^L}\frac{I^g(\la)}{I^1(\la)}.
     \es Here, we have applied~\eqref{main res} and assumed by~\eqref{R ineq v1} that $(R^2d)^{L+1}/\la^L$ is small enough. We also used that $I^g(\la)>0$ by Theorem~\ref{thm:main res}. We have $I^g(\la)/I^1(\la) \leq \max_{\|x\|\leq r_0}|g(x)| +|I^g_{\mathrm{out}}(\la)|/I^1_{\mathrm{in}}(\la)$. The first summand is bounded by a function of $\argc{g}$ using~\ref{Ial g} of Assumption~\ref{ass:fg}, and the second term is bounded by an absolute constant, e.g. 1, using the arguments in Section~\ref{sec:out}.
 \end{proof}
Next, we construct an approximation $\hat\pi_L$ of $\pi$ which is easy to sample from, and which can be used to approximate $\E_{X\sim\pi}[g(X)]$ for nonsmooth $g$. The following theorem is our second main result.
\begin{theorem}\label{thm:meas} Fix any $L\ge 1$ and suppose $d\geq 2L$. Suppose $f$ satisfies~\ref{Ial} of Assumption~\ref{ass:fg} and~\eqref{f-suff}. Let $\kappa=\kappa_f$ and suppose $R,d,\la$ satisfy~\eqref{R ineq v1} for large enough $c^+=c^+(\argc{f})$ and small enough $c^-=c^-(\argc{f})$. Then it holds
\be\label{main res meas}
\mathrm{TV}(\pi,\hat\pi_L)=\tfrac12\sup_{\|g\|_\infty\leq1}\left|\E_{X\sim\pi}[g(X)]-\E_{X\sim\hat\pi_L}[g(X)]\right|\leq C(\argc{f})\frac{(R^2d)^{L+1}}{\la^L},
\ee where $\hat\pi_L=(x_L)_{\#}\mathcal N(0, \la^{-1}I_d)$. 
The map $x_L$ is defined by
\[
x_L \;=\; T_0\circ T_1\circ\cdots\circ T_{L-1}\circ Q .
\]
Here $T_0=X$ is as in Lemma~\ref{lma:changevar}, while the remaining maps are those obtained by carrying out the
constructions of Section~\ref{sec:in} with $g\equiv 1$ (i.e., for the Laplace integral with integrand
$e^{-\la f(x)}$). In particular, $T_m$, $m=1,\dots,L-1$ are the maps from Lemma~\ref{lma:changevar-m}
and $Q(s)=B^{-1/2}(s-B^{-1/2}a)$ is defined using $a,B$ from Definition~\ref{def:aB}, all in the $g\equiv1$ setting.
\end{theorem}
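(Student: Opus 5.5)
The idea is to read the chain of changes of variables from Section~\ref{sec:in} (with $g\equiv1$) as a pushforward. Set $\us_0=\{\|x\|\le R\sqrt\e\}$, $\us_1=T_0^{-1}(\us_0)$, $\us_{m+1}=T_m^{-1}(\us_m)$ and $\tilde\us=Q^{-1}(\us_L)$. Lemmas~\ref{lma:U} and~\ref{lma:phi-m} show that $x_L$ is a bijection from $\tilde\us$ onto $\us_0$, and that $\tilde\us$ contains a ball of radius $\asymp R\sqrt\e$ around the origin. The plan is to compare the two densities on $\us_0$ and then control what lies outside.

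\textbf{Step 1 (densities on $\us_0$).} Let $\gamma$ be the $\mathcal N(0,\la^{-1}I_d)$ density. For $x=x_L(s)\in\us_0$, the density of $\hat\pi_L$ is $\gamma(s)/|\det x_L'(s)|$. The first change of variables (Lemma~\ref{lma:changevar}), the iterations (Corollary~\ref{corr:E}, equation~\eqref{Em1}, applied pointwise), and the completion of the square~\eqref{after sqr} give, for $s\in\tilde\us$,
\[
-\la f(x_L(s))+\log|\det x_L'(s)| = -\tfrac\la2\|s\|^2+\tfrac\la2 a^\top B^{-1}a-\tfrac12\log\det B+\CO(R^{2L+2}d\e^L).
\]
This is a pointwise identity. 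The integrated statements only used it inside integrals, and it holds because each change of variables is a bijection between the relevant sets. It follows that $e^{-\la f(x)}=C_\la\,\hat\pi_L(x)\,e^{\delta(x)}$ on $\us_0$, with $\sup|\delta|\lesssim (R^2d)^{L+1}/\la^L=:\eta$. Here $C_\la=(2\pi/\la)^{d/2}e^{\frac\la2a^\top B^{-1}a}(\det B)^{-1/2}$, and by Lemma~\ref{lma:square} it equals $I_{\mathrm{in}}(\la)(2\pi/\la)^{d/2}e^{\CO(\eta)}$.

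\textbf{Step 2 (TV split).} Write $2\,\mathrm{TV}\le\int_{\us_0}|\pi-\hat\pi_L|+\pi(\us_0^c)+\hat\pi_L(\us_0^c)$. By Step 1, on $\us_0$ we have $\pi=\hat\pi_L\,e^{\delta}\,C_\la/Z$ with $Z=\int e^{-\la f}$. Lemmas~\ref{lma:in} and~\ref{lma:out} give $Z=(2\pi/\la)^{d/2}I_{\mathrm{in}}(1+\CO(\eta))$, so $C_\la/Z=e^{\CO(\eta)}$. Hence $\int_{\us_0}|\pi-\hat\pi_L|\lesssim\eta$, and Lemma~\ref{lma:out} with $g\equiv1$ bounds $\pi(\us_0^c)=I_{\mathrm{out}}/I\lesssim\eta$. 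For the last term, $\hat\pi_L(\us_0^c)=1-\hat\pi_L(\us_0)$. Step 1 gives $\hat\pi_L(\us_0)=\pi(\us_0)e^{\CO(\eta)}=1-\CO(\eta)$. Alternatively, one can bound it directly by $\P(\la^{-1/2}Z\notin\tilde\us)\le\la^{-L}$ via the Gaussian estimate~\eqref{PZlb}, since $\tilde\us$ contains a ball of radius $C'R\sqrt\e$.

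\textbf{Main obstacle.} The delicate part is the global behaviour of $x_L$ outside $\tilde\us$. The maps $T_m$ are polynomials, so they need not be injective on all of $\br^d$, and the pushforward density off $\tilde\us$ is not controlled. The split above avoids this entirely: it only ever uses the mass $\hat\pi_L(\us_0^c)$, and that mass is bounded through $\hat\pi_L(\us_0)\ge\P(\la^{-1/2}Z\in\tilde\us)$, which needs only injectivity on $\tilde\us$. What remains is checking that the error in Step 1 is truly uniform in $s$ and not merely an integrated bound. This holds because every $\CO(\cdot)$ in Lemmas~\ref{lma:changevar},~\ref{lma:changevar-m} and~\ref{lma:phi-m} is stated as a sup bound on the relevant sets.
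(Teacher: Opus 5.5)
Your route is essentially the paper's: you read the chain $T_0,T_1,\dots,T_{L-1},Q$ as a pointwise identity on sets that are mapped bijectively, use Lemma~\ref{lma:out} for the tail of $\pi$, and use \eqref{PZlb} for the Gaussian tail. The paper phrases this with test functions $0\le g\le 1$, you phrase it with densities. However, Step~1 contains a false claim, and your ``main obstacle'' paragraph puts the difficulty in the wrong place.

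For $x\in\us_0$, the density of $\hat\pi_L$ is not $\gamma(s)/|\det x_L'(s)|$ with $s$ the unique preimage in $\tilde\us$. It is the sum of $\gamma(s')/|\det x_L'(s')|$ over all $s'\in\br^d$ with $x_L(s')=x$, and points of $\tilde\us^c$ can be mapped into $\us_0$. For instance, take $L=1$ and $\nabla^3f(0)=c\,e_1^{\otimes 3}$. Then $T_0(t)=t-\tfrac c6 t_1^2e_1$ sends $t=(6/c)e_1$ to $0$, so $x_1=T_0\circ Q$ has a preimage of the origin at distance of order one, far outside $\tilde\us$. Splitting TV into $\us_0$ and $\us_0^c$ in $x$-space therefore does not sidestep non-injectivity. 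It enters precisely through $\int_{\us_0}|\pi-\hat\pi_L|$, and the identity $\pi=\hat\pi_L\,e^{\delta}C_\la/Z$ on $\us_0$ is not justified.

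The repair uses only what you already have. Write $\hat\pi_L=\nu_1+\nu_2$ with $\nu_1=(x_L)_\#(\gamma\,\ind_{\tilde\us})$ and $\nu_2=(x_L)_\#(\gamma\,\ind_{\tilde\us^c})$.
\begin{itemize}
\item The map $x_L$ is a $C^1$ bijection $\tilde\us\to\us_0$ with invertible Jacobian, because $\|\varphi'\|\le 1/2$ and $\e^m\|\varphi_m'\|\le1/2$ on the relevant balls in Lemmas~\ref{lma:U} and~\ref{lma:phi-m}.
\item Hence Step~1 is a correct statement about the density of $\nu_1$, and it gives $\pi=\nu_1 e^{\CO(\eta)}$ on $\us_0$.
\item By \eqref{PZlb}, $\nu_2(\br^d)=\gamma(\tilde\us^c)\le\la^{-L}\le\eta$.
\item Therefore $\int_{\us_0}|\pi-\hat\pi_L|\le\int_{\us_0}|\pi-\nu_1|+\nu_2(\us_0)\les\eta$.
\end{itemize}
The other two terms go through as you wrote them. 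There you only need the lower bound $\hat\pi_L(\us_0)\ge\nu_1(\us_0)=\gamma(\tilde\us)$, which uses nothing beyond $x_L(\tilde\us)\subseteq\us_0$.

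This is exactly the role of the indicator $\ind\{Z_\la\in\tilde\us_L\}$ in the paper's \eqref{Igfinal}. The paper keeps it in $s$-space and removes it only at the end, at cost $\CO(\la^{-L})$. Once repaired, your density-level version has the small advantage of not needing the paper's reduction to $g\ge 0$, since the pointwise bound $|\pi-\nu_1|\le(e^{C\eta}-1)\nu_1$ has no sign issue.
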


The maps in the composition increase in complexity as one goes outward: $Q$ is linear, and $T_{L-m}$ is a polynomial of degree $2m$, $m=1,\dots, L$.
\begin{remark}
    Although we cannot prove~\eqref{main res meas} via Theorem~\ref{thm:main res} since $g$ is not smooth, nearly all the proof ingredients from Theorem~\ref{thm:main res} can be reused. 
 \end{remark}
 \begin{remark}\label{rk:xLderiv}
     Constructing $\hat\pi_L$ requires computing the first $2L+1$ derivatives of $f$. To see this, recall from Lemma~\ref{lma:changevar} that $T_0=X$ is a change of variables ensuring that $f_{2L+1}(T_0(t))=\|t\|^2/2+\CO(\|t\|^{2L+2})$, where $f_{2L+1}(x)=\sum_{k=3}^{2L+1}\frac{1}{k!}\nabla^kf(0)[x^{\otimes k}]$. Thus clearly, $T_0$ should depend on $\nabla^kf(0)$, $k=3,\dots,2L+1$. Since $\hat\pi_L=(x_L)_{\#}\mathcal N(0, \la^{-1}I_d)$ and $x_L$ is a composition involving $T_0$, constructing $\hat\pi_L$ also requires these derivatives.
 \end{remark}
 Theorem~\ref{thm:meas} indeed gives a tractable algorithm for approximately sampling from $\pi$: simply draw $Z_i\sim\mathcal N(0, \la^{-1}I_d)$ i.i.d. and return $x_L(Z_i)$. To do this, we push $Z_i$ through the sequence of $L+1$ maps $Q, T_{L-1},T_{L-2},\dots,T_0$. In the case $L=1$, we explicitly construct $x_1$ in Section~\ref{subsec:measL12}. 

See also further discussion of the uses for Theorems~\ref{thm:meas-smooth} and~\ref{thm:meas} in Section~\ref{subsec:meas:discuss}.

\begin{proof}[Proof of Theorem~\ref{thm:meas}]
    In this proof, $\les$ suppresses a constant depending only on $\argc{f}$. We need to prove that
    \be\label{toprove}\left|\textstyle\int gd\pi -\E\left[g\left(x_L(Z_\la)\right)\right]\right|\les R^{2L+2}d\e^L\ee for all $\|g\|_\infty\leq1$, where $Z_\la\sim\mathcal N(0, \la^{-1}I_d)$. By writing $g=\max(g,0)+\min(g,0)$ and applying triangle inequality in~\eqref{toprove}, it further suffices to only consider functions $g\geq0$, $\|g\|_\infty\leq1$. Fix such a $g$. Write $I^g_{\mathrm{in}}(\la)$, $I^g_{\mathrm{out}}(\la)$ instead of $I_{\mathrm{in}}(\la)$, $I_{\mathrm{out}}(\la)$, respectively. We start by bounding $I^g_{\mathrm{out}}(\la)/I^1_{\mathrm{in}}(\la)$ using essentially the exact same technique as in the proof of Lemma~\ref{lma:out} in Section~\ref{sec:out}. Note that $ge^{-\la f}$ satisfies~\eqref{eq:coercive gf} with $\argc{g}=0$ because $f$ satisfies~\eqref{f-suff} and $g$ is bounded by 1. Therefore, the upper bound on $I^g_{\mathrm{out}}(\la)$ from~\eqref{outub} remains true. Regarding conditions on $R,\kappa,d,\la$, the proof of~\eqref{outub} uses~\eqref{R ineq v1} only with absolute constants $c^\pm$.

    Furthermore, the lower bound on $I^1_{\mathrm{in}}(\la)$ from~\eqref{inlb} is also true, and it requires only that $R\sqrt\e\leq r_0$ and $d\geq2L\geq2$. 
    
    Applying~\eqref{outub} to upper bound $|I^g_{\mathrm{out}}(\la)|$ and~\eqref{inlb} to lower bound $I^1_{\mathrm{in}}(\la)$, we conclude that the ratio $|I^g_{\mathrm{out}}(\la)
    |/I^1_{\mathrm{in}}(\la)$ satisfies the exact same upper bound as in~\eqref{Ioutin-0}.  We can still conclude the final inequality~\eqref{Ioutin-1} by choosing $c^-$ and $c^+$ that depend on $\argc{f}$ only. Thus
    \be
    |I^g_{\mathrm{out}}(\la)
    |/I^1_{\mathrm{in}}(\la) \leq 5(R^2d)^{L+1}/\la^L=5R^{2L+2}d\e^L
    \ee and by the same logic, 
    \be
    I^1_{\mathrm{out}}(\la)
    /I^1_{\mathrm{in}}(\la) \leq 5R^{2L+2}d\e^L.
    \ee Now, note that $\int gd\pi = I^g(\la)/I^1(\la)$. Omitting the argument $(\la)$ for brevity, we then have
    \be\label{intgxL}
|\textstyle\int gd\pi - \E[g(x_L(Z_\la))]|\leq |I^g/I^1-I^g_{\mathrm{in}}
    /I^1_{\mathrm{in}}| + |I^g_{\mathrm{in}}
    /I^1_{\mathrm{in}}-\E[g(x_L(Z_\la))]|.
    \ee
    Furthermore,
    \be\label{intgxL2}
|I^g/I^1-I^g_{\mathrm{in}}
    /I^1_{\mathrm{in}}| \leq \frac{|I^g-I^g_{\mathrm{in}}|}{I^1}+\frac{|I^g_{\mathrm{in}}|}{I^1_{\mathrm{in}}}\left|\frac{I^1_{\mathrm{in}}}{I^1}-1\right| \leq \frac{|I^g_{\mathrm{out}}|}{I^1_{\mathrm{in}}}+\frac{I^1_{\mathrm{out}}}{I^1_{\mathrm{in}}}\les R^{2L+2}d\e^L.
    \ee To get the second inequality, we used $\frac{|I^g_{\mathrm{in}}|}{I^1_{\mathrm{in}}}\leq1$ since $\|g\|_\infty\leq1$. It remains to study the term $|I^g_{\mathrm{in}}
    /I^1_{\mathrm{in}}-\E[g(x_L(Z_\la))]|$ from~\eqref{intgxL}. We revise the argument in Section~\ref{sec:in}. Since the change of variables $T_0(t):=X(t)$ from Lemma~\ref{lma:changevar} does not depend on $g$, we only need $R\sqrt\e$ smaller than a constant depending on $\argc{f}$ alone in the argument below Lemma~\ref{lma:U}. We conclude, analogously to~\eqref{initchange} but without bringing $g$ into the exponent, that
    \be\label{Igin}
    I^g_{\mathrm{in}}(\la)=e^{\CO(R^{2L+2}d\e^L)}\left(\frac{\la}{2\pi}\right)^{d/2}\int_{\us_1}g(T_0(t))\exp\left(-\la\left[\tfrac12\|t\|^2-\tfrac1\la\log\det(T_0'(t))\right]\right)\dd t.\ee For~\eqref{Igin} and other multiplicative big-$\CO$ identities below to be valid, we use that $g\geq0$. Next, we expand $\log\det(T_0'(t))$ as in Lemma~\ref{lma:logdet exp}. The lemma goes through unchanged, except that the $\ef{k}$'s do not depend on any derivatives of $g$. (Recall from Definition~\ref{def:base comp} that $\ef{k}$'s are in principle allowed to depend on both derivatives of $f$ and $g$.) Combining Lemma~\ref{lma:logdet exp} with~\eqref{Igin} gives
\bs\label{E1-no-g}
&I_{\mathrm{in}}^g(\la)=e^{\CO(R^{2L+2}d\e^L)}\left(\frac{\la}{2\pi}\right)^{d/2}\int_{\us_1}g(T_0(t))\exp\left(-\la E_1(t)\right)\dd t,\\
&E_1(t):=\tfrac12\norm{t}^2+\e\sum_{k=1}^{2L-1}\ef{k}[t^{\otimes k}],\\
&\{\|t\|\leq \tfrac23R\sqrt\e\}\subseteq\us_1\subseteq\{\|t\|\leq 2R\sqrt\e\}.
\es 
Note that this $E_1(t)$ is precisely what we would get in~\eqref{E1} if we had taken $g\equiv1$ in the argument in Section~\ref{sec:in}. We now iterate from $E_m$ to $E_{m+1}$ exactly as in Lemmas~\ref{lma:changevar-m} and~\ref{lma:phi-m}. We then conclude~\eqref{Em1}. Combining~\eqref{Em1} with~\eqref{E1-no-g} (when $m=1$, and then iteratively updating~\eqref{E1-no-g}) we conclude the following analogue of~\eqref{Emm1}:
\bs\label{Emm1-variant}
\int_{\us_m}g((T_0\circ&\dots\circ T_{m-1})(t))e^{-\la E_m(t)}\dd t \\
&= e^{\CO(R^{2L+2}d\e^L)}\int_{\us_{m+1}}g((T_0\circ\dots\circ T_{m})(s))e^{-\la E_{m+1}(s)}\dd s.
\es
Thus, starting with~\eqref{E1-no-g} and applying~\eqref{Emm1-variant} with $m=1,\dots, L-1$, we conclude that
\bs\label{EL-no-g}
I^g_{\mathrm{in}}(\la)=e^{\CO(R^{2L+2}d\e^L)}\left(\frac{\la}{2\pi}\right)^{d/2}\int_{\us_L}g(T_0\circ T_1\circ\dots\circ T_{L-1}(t))\exp\left(-\la E_L(t)\right)\dd t.
\es
Finally, we complete the square as in~\eqref{after sqr}, and use the change of variables $t=Q(s)$ from~\eqref{tQs}. This gives, analogously to~\eqref{eBa}, that
\bs\label{Igfinal}
I^g_{\mathrm{in}}(\la)=&e^{\CO(R^{2L+2}d\e^L)}e^{\frac\la2a^\top B^{-1}a}(\det B)^{-1/2}\\
&\times\left(\frac{\la}{2\pi}\right)^{d/2}\int_{\tilde\us_L}g(T_0\circ\dots\circ T_{L-1}\circ Q(s))\exp\left(-\la\|s\|^2/2\right)\dd s\\
=&e^{\CO(R^{2L+2}d\e^L)}e^{\frac{\la}{2}a^\top B^{-1}a}(\det B)^{-1/2}\E\left[g(x_L(Z_\la))\ind\{Z_\la\in\tilde\us_L\}\right],
\es
where $\tilde\us_L=B^{1/2}\us_{L}+B^{-1/2}a$. By the same logic and for the same $ B,a,\tilde\us_L$, we have
\be\label{I1final}
I_{\mathrm{in}}^1(\la)=e^{\CO(R^{2L+2}d\e^L)} e^{\frac{\la}{2}a^\top B^{-1}a}(\det B)^{-1/2}P(Z_\la\in\tilde\us_L).
\ee 
Now, note that $P(Z_\la\in\tilde\us_L)=P(Z\in\sqrt\la\tilde\us_L)$ for $Z\sim\mathcal N(0, I_d)$ and consider the proof of Lemma~\ref{lma:square}. The same logic can be used to show $P(Z\in\sqrt\la\tilde\us_L)\geq1-\la^{-L}$ as in~\eqref{PZlb}, with the one modification that all constants need only depend on $\argc{f}$, not on $\argc{g}$. Combining this lower bound with~\eqref{I1final} and~\eqref{Igfinal} gives
\be
\frac{I_{\mathrm{in}}^g(\la)}{I_{\mathrm{in}}^1(\la)}=e^{\CO(R^{2L+2}d\e^L)}\E\left[g(x_L(Z_\la))\ind\{Z_\la\in\tilde\us_L\}\right].
\ee Since $\|g\|_\infty\leq 1$ and using the lower bound on $\P(Z_\la\in\tilde\us_L)$, we have $$\E\left[g(x_L(Z_\la))\ind\{Z_\la\in\tilde\us_L\}\right]=\E\left[g(x_L(Z_\la))\right]+\CO(\la^{-L}).$$ Therefore,
$$
\frac{I_{\mathrm{in}}^g(\la)}{I_{\mathrm{in}}^1(\la)}=e^{\CO(R^{2L+2}d\e^L)}\E\left[g(x_L(Z_\la))\right]+\CO(\la^{-L}),
$$ so that
\be\label{intgxL3}
\left|\frac{I_{\mathrm{in}}^g(\la)}{I_{\mathrm{in}}^1(\la)}-\E\left[g(x_L(Z_\la))\right]\right|\les R^{2L+2}d\e^L.
\ee Combining~\eqref{intgxL},~\eqref{intgxL2}, and~\eqref{intgxL3} finishes the proof of~\eqref{toprove}.
\end{proof}

\subsection{Special cases: $L=1$ and $L=2$}\label{subsec:measL12}
We derive the approximation from Theorem~\ref{thm:meas-smooth} in the cases $L=1$, $L=2$, and the approximation from Theorem~\ref{thm:meas} in the case $L=1$.  

Theorem~\ref{thm:meas-smooth} with $L=1$ gives that for all $g\in\mathcal G_1(\argc{g},\kappa_g)$, we have
\be
\E_{X\sim\pi}[g(X)]=1+\CO\left(R^4\frac{d^2}{\la}\right),
\ee where $\CO$ suppresses dependence on $\argc{g}$ and $\argc{f}$. Here, recall that we assume $g(0)=1$. To work out the case $L=2$, we compute $b_1(f,g)-b_1(f,1)$. Recall the formula for $b_1(f,g)$ from~\eqref{b1fg}. When we subtract $b_1(f,1)$, all terms involving only $f$ will cancel. The remaining expression is $b_1(f,g)-b_1(f,1)=-\frac12\nabla\Delta f(0)^\top\nabla g(0)+\frac12\Delta g(0)$. We conclude that for all $g\in\mathcal G_2(\argc{g},\kappa_g)$, we have 
$$\E_{X\sim\pi}[g(X)]=\exp\left(-\frac1{2\la}\nabla\Delta f(0)^\top\nabla g(0)+\frac1{2\la}\Delta g(0)\right)+\CO\left(R^6\frac{d^3}{\la^2}\right).$$

Next, we compute the map $x_1=T_0\circ Q$ from Theorem~\ref{thm:meas}. The map $T_0$ is $T_0=X$ from Lemma~\ref{lma:changevar}. Below this lemma, we showed that when $L=1$ we have \be\label{T0-1}T_0(t)=t-\frac16\nabla^3f(0)[t^{\otimes2}].\ee To determine $Q$, we need to derive $E_1(t)$ and write it in the form~\eqref{ELaB}. The function $E_1$ is given by adding the nonnegligible part of $\frac1\la\log\det T_0'(t)$ to $\|t\|^2/2$. We have
$$
\frac1\la\log\det T_0'(t)=\frac1\la\tr\log\left(I_d-\tfrac1{3}\nabla^3f(0)[t]\right)=-\tfrac1{3\la}\tr(\nabla^3f(0)[t])+\e\CO(\|t\|^2),
$$ as in Lemma~\ref{lma:logdet exp} but explicitly computing that $d\ef{1}[t]=-\tfrac13\tr(\nabla^3f(0)[t])$. Here, $\nabla^3f(0)[t]$ is the matrix with $(i,j)$th entry given by $\nabla^3f(0)[t, e_i, e_j]$. Throwing out $\e\CO(\|t\|^2)$, we conclude
\be\label{E1-1}E_1(t)=\frac12\|t\|^2 + \frac1{3\la}\tr(\nabla^3f(0)[t])=\frac12\|t\|^2+\frac1{3\la}\nabla\Delta f(0)^\top t.\ee 
Comparing with~~\eqref{ELaB}, we see that $a=\frac1{3\la}\nabla\Delta f(0)$ and $B=I_d$. Thus Theorem~\ref{thm:meas} gives 
\be\label{S-1}
Q(s) = B^{-1/2}(s-B^{-1/2}a)=s-\frac1{3\la}\nabla\Delta f(0).\ee
Finally, $x_1=T_0\circ Q$. Thus Theorem~\ref{thm:meas} with $L=1$ gives that
\bs\label{x1-g}
\E_{X\sim\pi}[g(X)] &= \E\left[g\left(S-\frac16\nabla^3f(0)[S,S,\cdot]\right)\right] + \CO\left(R^4\frac{d^2}{\la}\right),\\
S&\sim\mathcal N\left(-\frac1{3\la}\nabla\Delta f(0), \la^{-1}I_d\right)
\es for all $\|g\|_\infty\leq1$, where $\CO$ suppresses dependence on $\argc{f}$ only. Here, we have used that $\hat\pi_1=(T_0\circ Q)_{\#}\mathcal N(0,\la^{-1}I_d)$, which is the pushforward under $T_0$ of $Q_{\#}\mathcal N(0,\la^{-1}I_d)=\mathcal N(-\frac1{3\la}\nabla\Delta f(0), \la^{-1}I_d)$. 

The righthand expectation in~\eqref{x1-g} typically cannot be evaluated in closed-form, but can easily be approximated by Monte Carlo using samples $S_i\sim \mathcal N(-\frac1{3\la}\nabla\Delta f(0), \la^{-1}I_d)$.

\section{Comparison to the literature and examples}\label{sec:examples}
In Section~\ref{subsec:compareintegral}, we compare our integral expansion result to that of~\cite{katsevich2025a}. Then in Sections~\ref{quartic} and~\ref{logreg}, we derive the integral expansion for a few examples. In Section~\ref{subsec:meas:discuss}, we discuss our results involving Laplace-type densities from Section~\ref{sec:meas}, and compare them to related work in the literature. 

Throughout the section, we assume $d\geq\log\la$ to simplify formulas. 
\subsection{Comparison to integral expansions in the literature}\label{subsec:compareintegral}
In the below informal discussion of our results, we neglect any dependence on $r_0,\kappa$. Since also $d\geq\log\la$, a constant $R$ can be used to satisfy~\eqref{R ineq v1}. Theorem~\ref{thm:main res} then shows that
\be\label{curr}
I(\la)=\left(\frac{\la}{2\pi}\right)^{d/2}\int_{\br^d}g(x)e^{-\la f(x)}\dd x = \exp\left(\sum_{k=1}^{L-1}b_k\la^{-k} + \CO(d^{L+1}/\la^L)\right),\quad |b_k|\les d^{k+1},
\ee 
provided $\max_{1\leq k\leq 2L}\|\nabla^k(\log g)(x)\|_\op < \argc{g}$ and $\max_{3\leq k\leq 2L+2}\|\nabla^kf(x)\|_\op < \argc{f}$, uniformly over $x$ in a small neighborhood of 0. As discussed in the introduction, in~\cite{katsevich2025a} it is shown that
\be\label{prev}
I(\la)=\left(\frac{\la}{2\pi}\right)^{d/2}\int_{\br^d}g(x)e^{-\la f(x)}\dd x =\sum_{k=1}^{L-1}a_k\la^{-k} + \mathcal O(d^{2L}/\la^L),\quad |a_k|\les d^{2k},
\ee provided $\|\nabla^kg(x)\|_\op\les d^{\lceil k/2\rceil}$, $k=1,\dots,2L$ and $\|\nabla^kf(x)\|_\op\les d^{\lceil k/2\rceil-2}$, $k=3,\dots,2L+2$, uniformly over $x$ in a small neighborhood of 0. The suppressed constants in the big-$\CO$ and in the bound on $|a_k|$ in~\eqref{prev} depend on the suppressed constants in the derivative bounds.

Both results also require a few other minor assumptions, but we have highlighted the most important ones for this discussion. 
We make a few comments on the difference between the two works.
\begin{enumerate}
\item For~\eqref{curr}, we have assumed the derivative operator norms are bounded independently of $d$. This was not required in~\cite{katsevich2025a}. However, if the assumption does hold, then our result is strictly stronger than that of~\cite{katsevich2025a}. Specifically, suppose the operator norms are bounded and $d^2\ll\la$. Then~\eqref{prev} can be derived from~\eqref{curr}.

\item As can be seen from~\cite{katsevich2025a}, $d^2\ll \la$ is necessary for the expansion of $I(\la)$ even when the derivative operator norms are bounded.  Indeed, consider the example in Section~\ref{quartic}, also studied in Example 2.19 in~\cite{katsevich2025a}. We see that the derivative operator norms are indeed bounded, yet~\cite{katsevich2025a} proves that the expansion of $I(\la)$ is valid only if $d^2\ll \la$. This shows that our improved dimension dependence \emph{cannot simply be attributed to the stricter requirement we have imposed on the operator norms}. Rather, it is due to the intrinsic difference between expanding $I(\la)$ and expanding $\log I(\la)$, as described in the introduction.
\item In the expansion of $I(\la)$ in~\cite{katsevich2025a}, the derivative operator norms beyond the fourth order were allowed to grow with $d$ due to slack in the bound. A similar phenomenon may hold for the expansion of $\log I(\la)$. Namely, there may be some slack which would permit derivative norm growth with $d$. We leave this investigation to future work.
\end{enumerate}


\subsection{Example: quartic exponent}\label{quartic}Let $f(x)=\|x\|^2/2 + \|x\|^4/24$ and $g(x)\equiv1$. Consider~\eqref{eq:Taylor-f} and~\eqref{Fk norms}. For all $L\geq2$ we have $\mathcal R_{2L+2}(x)\equiv0$, and $\nabla^3f(0)[x^{\otimes3}]=0$, $\nabla^4f(0)[x^{\otimes4}]=\|x\|^4$, $\nabla^kf(0)[x^{\otimes k}]=0$ for all $k\geq5$. Thus~\eqref{eq:Taylor-f} and~\eqref{Fk norms} are satisfied for any $r_0$. Also, $\log g\equiv 0$ trivially satisfies~\eqref{eq:Taylor-g},~\eqref{Gk norms} for any $r_0$. Furthermore,~\eqref{eq:coercive gf} holds with $\kappa=1$, since $|g(x)|e^{-\la f(x)}\leq e^{-\la\|x\|^2/2}$. 

Theorem~\ref{thm:main res} therefore applies. Since we assume $d\geq\log\la$ throughout the section, we can take $R$ to be an absolute constant in~\eqref{R ineq v1}. We conclude that
\bs\label{quartL}
\log\left\{\left(\frac{\la}{2\pi}\right)^{d/2}\int_{\br^d}e^{-\la \left(\frac{\|x\|^2}{2}+\frac{\|x\|^4}{24}\right)}\dd x\right\}  =\sum_{k=1}^{L-1} b_k\la^{-k}+\CO\left(d^{L+1}/\la^L\right),
\es for all $d^{L+1}/\la^L$ small enough. Here, the constant suppressed by the big-$\CO$ depends on $L$ only. Next, let us compute $b_1$, given by~\eqref{b1fg}. Plugging in $f(x)=\|x\|^2/2 + \|x\|^4/24$ and $g(x)\equiv1$ gives
\bs\label{b1fg-quart}
b_1 &=-\frac18\Delta^2f(0)=-\frac18\frac{1}{24}\sum_{i,j=1}^d\pa_i^2\pa_j^2(\|x\|^4)\\
&= -\frac18\frac{1}{24}\sum_{i\neq j}\pa_i^2\pa_j^2(2x_i^2x_j^2)-\frac18\frac{1}{24}\sum_{i=1}^d\pa_i^4(x_i^4)\\
&=-\frac1{24}(d^2-d) -\frac18d = -\frac1{24}d^2 -\frac{1}{12}d.
\es
Using this in~\eqref{quartL} with $L=2$, we conclude that
\be\label{I1}
I(\la)=\left(\frac{\la}{2\pi}\right)^{d/2}\int_{\br^d}e^{-\la \left(\frac{\|x\|^2}{2}+\frac{\|x\|^4}{24}\right)}\dd x =  \exp\left(-\frac{d^2}{24\la} -\frac{d}{12\la} + \CO(d^3/\la^2)\right),
\ee where the constant suppressed by the big-$\CO$ is absolute. 

This example is also studied in~\cite{katsevich2025a}, where it is shown that $I(\la)= 1-(d^2/24+d/12)/\la + \CO(d^4/\la^2)$, but \emph{only if $d^2/\la\ll1$}. Our result improves on~\cite{katsevich2025a} in that it both tightens the remainder bound and broadens the range of applicability of the expansion into higher $d$ regimes.



\subsection{Example: logistic regression}\label{logreg}
Next, we consider an idealized statistical set-up, in which $\int_{\br^d}e^{-\la f(x)}\dd x$ is the normalizing constant of the posterior in a logistic regression model. See Section~\ref{ssec:statistics_motivation} for more details on how this quantity arises in statistics. We call the large parameter $\la=n$ the sample size. Let $X_1,\dots, X_n\in\br^d$ be the feature column vectors, and $x^*\in\br^d$ be the ground truth we would like to estimate. 
We assume (1) $X_1,\dots,X_n$ span $\br^d$, (2) the lowest eigenvalue of $\frac1n\sum_{i=1}^nX_iX_i^\top$ is bounded below away from zero by some $c_X>0$, and (3) $\max_{i=1,\dots,n}\|X_i\|\max(1, \|x^*\|)<C_X$. Next, let $\psi:\br\to\br$ be a smooth strictly convex function, with $\|\psi^{(k)}\|_\infty=\sup_{t\in\br}|\psi^{(k)}(t)|<\infty$ for all $k=2,3,4,\dots$. Consider the function
$$\ell(x) = \frac1n\sum_{i=1}^n\left[\psi(X_i^\top x)-\psi'(X_i^\top x^*)X_i^\top x\right]$$ which has a unique global minimizer at $x=x^*$. When $\psi(t)=\log(1+e^{t})$, the function $\ell$ is the negative, normalized population log likelihood for the logistic regression model. (The fact that $\ell$ is the \emph{population} rather than the \emph{sample} log likelihood makes this an idealized set-up.) 

Let
\be
H:=\nabla^2\ell(x^*)=\frac1n\sum_{i=1}^n\psi''(X_i^\top x^*)X_iX_i^\top.
\ee
The matrix $H$ is strictly positive definite. Indeed, let $c_\psi=\inf_{|t|<C_X}\psi''(t)$, which is positive since $\psi''(t)>0$ for all $t\in\br^d$ and $\psi$ is smooth. Then $H\succeq c_\psi\,\frac1n\sum_{i=1}^nX_iX_i^\top\succeq  c_\psi c_XI_d$. 
By a change of variables, we have
\be\label{ell-f}\int_{\br^d}e^{-n\ell(x)}\dd x =\frac{e^{-n\ell(x^*)}}{\sqrt{\det H}}\int_{\br^d}e^{-nf(x)}\dd x,\qquad f(x) = \ell(H^{-1/2}x+x^*)-\ell(x^*).
\ee 
We verify the conditions of Theorem~\ref{thm:main res} for this $f$. We have $f(0)=0$, $\nabla f(0)=0$ and $\nabla^2f(0)=I_d$. Thus the Taylor expansion of $f$ indeed takes the form in~\eqref{eq:Taylor-f}. Furthermore, using the above lower bound on $H$ and the definition of $\ell$, we have for $k\ge 3$
\bs
\sup_{x\in\br^d}\|\nabla^kf(x)\|_\op&\leq (c_\psi c_X)^{-k/2}\sup_{x\in\br^d}\|\nabla^k\ell(x)\|_\op \\
&= (c_\psi c_X)^{-k/2}\sup_{x\in\br^d}\left\|\frac1n\sum_{i=1}^n\psi^{(k)}(X_i^\top x)X_i^{\otimes k}\right\|_\op\\
&\leq (c_\psi c_X)^{-k/2}\|\psi^{(k)}\|_\infty\max_{i=1,\dots,n}\|X_i\|^k\\
&\leq (c_\psi c_X)^{-k/2}\|\psi^{(k)}\|_\infty C_X^k.
\es
Thus~\eqref{Fk norms} is satisfied for any $r_0$. The condition on $g\equiv1$ in Assumption~\ref{ass:fg}, part~\ref{Ial g} is trivially satisfied. Finally, to verify~\eqref{eq:coercive gf}, we prove the sufficient condition~\eqref{f-suff} holds. Even more simply, it suffices to show $f(x)\geq\kappa\min(\|x\|^2/2, \sqrt{\e}\|x\|)$ for all $x\in\br^d$. Let $c=\max_{u\in\br^d}\|\nabla^3f(x)\|_\op$, which we know is bounded, by the above calculation. Then for all $\|x\|\leq\sqrt\e$, a Taylor expansion gives $f(x)\geq (1-c\sqrt\e/3)\|x\|^2/2 \geq\frac12(\|x\|^2/2)$, if $\e$ is small enough. For all $\|x\|\geq\sqrt\e$, convexity of $f$ implies that $f(x)\geq \frac{\|x\|}{\|y\|}f(y)$, where $\|y\|=\sqrt\e$ and $y$ lies on the line segment between $0$ and $x$. But now we use that $f(y)\geq\|y\|^2/4$ since $\|y\|=\sqrt\e$, so $f(x)\geq\|x\|\|y\|/4=\|x\|\sqrt\e/4$. Therefore, $f(x)\geq\kappa\min(\|x\|^2/2, \sqrt{\e}\|x\|)$ is satisfied on $\br^d$, with $\kappa=1/4$.

Theorem~\ref{thm:main res} therefore applies to $f$ from~\eqref{ell-f}. As in Section~\ref{quartic}, we can satisfy the lower bound in~\eqref{R ineq v1} by taking $R=c$ for some  constant $c$ depending on $C_X$, $c_x$, $\psi$, and $L$. Writing
$$
\int_{\br^d}e^{-n\ell(x)}\dd x= \frac{e^{-n\ell(x^*)}}{\sqrt{\det H}}\left(\frac{2\pi}{n}\right)^{d/2}\times \left\{\left(\frac{n}{2\pi}\right)^{d/2}\int_{\br^d}e^{-nf(x)}\dd x\right\},
$$ we conclude that
\bs\label{logreg-L}
\log\int_{\br^d}e^{-n\ell(x)}\dd x = &-n\ell(x^*)-\frac12\log\det H -\frac d2\log\frac{n}{2\pi}\\
&+\frac{b_1}{n}+\CO\left( \frac{d^3}{n^2}\right),
\es whenever $d^3/n^2$ is sufficiently small. Here, the constant suppressed by the big-$\CO$ depends on $C_X,c_x,\psi$. The term $b_1$ can be computed using the general formula~\eqref{b1fg}, and similar calculations as in~\cite{katsevich2025a,katsevich2024b}. Note that the righthand side of the first line in~\eqref{logreg-L} is the Bayesian information criterion (BIC)~\cite{neath2012bayesian}. Thus $b_1/n$ can be considered a higher-order correction to BIC, with the higher-order remainder explicitly controlled.

\subsection{Comparison to Laplace-type density approximations in the literature}\label{subsec:meas:discuss}
Here, we discuss the problem of approximating Laplace-type densities.
We first compare the two approximation strategies from Theorems~\ref{thm:meas-smooth} and~\ref{thm:meas}, showing that the former uses fewer $f$ derivatives at the cost of requiring smoothness of $g$, while the latter offers sampling flexibility. We then compare the combination of these two methods to other approaches in the literature. Our discussion in this section is focused on the statistical context.

Recall from Section~\ref{sec:meas} that Theorem~\ref{thm:meas-smooth} gives the approximation
 \be\label{Eg-expand}
\E_{X\sim \pi}[g(X)]= \exp\left(\sum_{k=1}^{L-1}[b_k(f,g)-b_k(f,1)]\la^{-k}\right) + \CO(d^{L+1}/\la^L),
 \ee 
 while Theorem~\ref{thm:meas} gives
 \be\label{Eg-expect}
\E_{X\sim \pi}[g(X)]= \E_{X\sim \hat\pi_L}[g(X)]+ \CO(d^{L+1}/\la^L),\qquad \hat\pi_L:=(x_L)_{\#}\mathcal N(0, \la^{-1}I_d).
 \ee  
 Here as above, we assume $d\geq\log\la$ and neglect dependence on $r_0,\kappa$. This allows us to take $R$ to be constant, which is why $R$ does not appear in the big-$\CO$'s above. 

 When~\eqref{Eg-expand} is applicable, it is a more powerful method for computing expectations than~\eqref{Eg-expect}. The reason for this is two-fold. First,~\eqref{Eg-expect} cannot be implemented exactly, and requires a further sampling step, as seen in~\eqref{eq:monte-carlo-approx}. This incurs additional computational cost and loss of accuracy. On top of this, using $\hat\pi_L$ requires computing higher-order $f$ derivatives than are involved in $\sum_{k=1}^{L-1}[b_k(f,g)-b_k(f,1)]/\la^k$. Indeed, Remarks~\ref{rk:xLderiv} and~\ref{rk:deriv-b} show that $\hat{\pi}_L$ uses $2L+1$ derivatives of $f$ derivatives, while $\sum_{k=1}^{L-1}[b_k(f,g)-b_k(f,1)]/\la^k$ uses only $2L-1$. The reason for this gap is that the two approximations achieve the same accuracy, but $\hat\pi_L$ does not exploit any smoothness of $g$; it must compensate by extracting more information from $f$. In practice, this difference in derivative count can be significant for computational cost. Note that the closed-form approximation~\eqref{Eg-expand} does require derivatives of $g$, while the pushforward approximation~\eqref{Eg-expect} does not. However, in Bayesian statistics $g$ is typically a simple function (e.g. linear or quadratic, corresponding to the mean or covariance of $\pi$), making its derivatives cheap to compute. In contrast, $f$ encodes the data likelihood and involves a sum over $\la=n$ terms, so each additional derivative of $f$ carries substantial computational cost.

The number of derivatives of $f$ and $g$ used in the approximations~\eqref{Eg-expand} and~\eqref{Eg-expect} are summarized in Table~\ref{table} for the cases $L=1$ and $L=2$.
\begin{table}[ht]
\centering
\begin{tabular}{l|cc|cc}
 & \multicolumn{2}{c|}{$L=1$} & \multicolumn{2}{c}{$L=2$} \\
 & closed-form \eqref{Eg-expand} & pushforward \eqref{Eg-expect}
 & closed-form \eqref{Eg-expand} & pushforward \eqref{Eg-expect} \\
\hline
formula & \eqref{Eg1} & \eqref{Epi1} & \eqref{Eg2} & --- \\
$\#$ derivs of $f$ & 0 & 3 & 3 & 5 \\
$\#$ derivs of $g$ & 0 & 0 & 2 & 0 \\
\end{tabular}
\caption{Number of derivatives used in the approximation of $\E_{X \sim\pi}[g(X)]$ by the closed form estimate~\eqref{Eg-expand} (first and third columns) and the pushforward estimate~\eqref{Eg-expect} (second and fourth columns). The two approximations for $L=1$ both have accuracy $\CO(d^2/\la)$, and the two approximations for $L=2$ both have accuracy $\CO(d^3/\la^2)$.}
\label{table}
\end{table}
For convenience, we remind the reader of the explicit formulas for three of the approximations referenced in the table: 
\begin{alignat}{2}
&\E_{X\sim\pi}[g(X)]\approx 1,\qquad &g\in \mathcal G_1(\argc{g},\kappa_g),\label{Eg1}\\
&\E_{X\sim\pi}[g(X)] \approx \E\left[g\left(S-\tfrac16\nabla^3f(0)[S,S,\cdot]\right)\right] ,\qquad &\|g\|_\infty\leq1,\label{Epi1}\\
&S\sim\mathcal N\left(-\tfrac1{3\la}\nabla\Delta f(0), \la^{-1}I_d\right),&\notag\\
&\E_{X\sim\pi}[g(X)]\approx\exp\left(-\tfrac1{2\la}\nabla\Delta f(0)^\top\nabla g(0)+\tfrac1{2\la}\Delta g(0)\right),\qquad &g\in \mathcal G_2(\argc{g},\kappa_g).\label{Eg2}
\end{alignat}
These were derived in Section~\ref{subsec:measL12}. We have omitted the formula for the approximation via $\E_{X\sim\pi_2}[g(X)]$ (i.e. the fourth column), which requires 5 derivatives of $f$. 

Although the density approximations $\hat\pi_L$ to $\pi$ are more derivative-intensive, they can be used to approximate expectations of nonsmooth functions $g$, which the closed-form method~\eqref{Eg-expand} cannot do. Furthermore, approximately sampling from $\pi$ is itself valuable, beyond just computing expectations. For example, in Bayesian statistics, samples can be used to construct approximate credible intervals to quantify uncertainty in the target parameter of inference. The algorithm to sample from $\hat\pi_1$ is especially simple:
\begin{enumerate}
    \item Draw $S_i\sim\mathcal N(-\frac1{3\la}\nabla\Delta f(0),\la^{-1}I_d)$ i.i.d.
    \item Return $X_i=S_i-\frac16\nabla^3f(0)[S_i,S_i,\cdot]$.
 \end{enumerate}
The combined ability to 1) easily generate approximate samples from $\pi$ (via the above algorithm), 2) approximate expectations of nonsmooth $g$ (via the Monte Carlo estimate), and 3) accurately and cheaply approximate expectations for smooth $g$ (via~\eqref{Eg1} and~\eqref{Eg2}) is extremely powerful compared to the state of the art.

Two noteworthy alternative methods in the literature for high-accuracy sampling and computing expectations are~\cite{durante2024skewed} and~\cite{katskew}. The two works construct approximations $\hat P_{\mathrm{SKS}}$ and $\hat\gamma_S$ to $\pi$, respectively. They are similar to our $\hat\pi_1$, involving only the second and third derivatives of $f$. (The reason our $\hat\pi_1$ only involves the third derivative is that we have assumed $\nabla^2f(0)=I_d$. For a generic $\nabla^2f(0)$, the second derivative will also arise.) 

We argue that our combined approach takes the best elements of each of the approximation methods in the above works. For sampling accuracy, the relevant metric is TV distance. In ~\cite{durante2024skewed}, the authors show only that $\mathrm{TV}(\pi,\hat P_{\mathrm{SKS}})\les d^3/\la$, whereas we show the tighter dimension dependence $\mathrm{TV}(\pi,\hat\pi_1)\les d^2/\la$. Furthermore, while $\hat P_{\mathrm{SKS}}$ is easy to sample from, it cannot be integrated against in closed-form. As a result, \emph{Monte Carlo sampling is always needed} for the purpose of computing expectations, even of smooth functions. As discussed above, this incurs extra computational cost as well as an additional source of error. But even if it were possible to compute $\E_{X\sim \hat P_{\mathrm{SKS}}}[g(X)]$ exactly, the accuracy of the approximation remains $d^3/\la$ at best. In contrast, by exploiting the smoothness of $g$, our estimate~\eqref{Eg2} achieves the much higher accuracy $d^3/\la^2$ while \emph{using the same number of derivatives of $f$}. This is a significant improvement by a factor of $1/\la$.

The approximation $\hat\gamma_S$ of~\cite{katskew} is a signed measure, not a true probability density. It has the advantage that expectations of polynomials against $\hat\gamma_S$ can be computed in closed-form, unlike $\hat P_{\mathrm{SKS}}$. However, our~\eqref{Eg-expect} gives a closed-form approximation for expectations of $g$ in the even broader class of smooth functions, not just polynomials.
Also, the fact that $\hat\gamma_S$ is not a true probability density has disadvantages; for example, ``sampling" from this signed measure is not well-defined. Our $\hat\pi_1$ does not have this issue: it is a true probability density and can be easily sampled from. 

Another significant advantage of our results, compared to those of~\cite{katskew} and~\cite{durante2024skewed}, is that we give a method to approximate $\pi$ (and expectations under $\pi$) to arbitrary order of accuracy. In contrast, the other two works focus only on a fixed order of approximation, and it is unclear whether their constructions (or proof techniques) can be extended to higher orders of accuracy. 

Finally, it is natural to compare~\eqref{Eg-expand} to the analogous result from~\cite{katsevich2025a}, which can be used to expand the Laplace integral in the numerator and denominator as follows:
\be
\E_{X\sim\pi}[g(X)]=\frac{\sum_{k=0}^{L-1}c_k(f,g)\la^{-k}+\CO((d^2/\la)^L)}{\sum_{k=0}^{L-1}c_k(f,1)\la^{-k}+\CO((d^2/\la)^L)}.
\ee
But the drawback of this result, as already discussed in Section~\ref{subsec:compareintegral}, is that it does not allow $d$ to be larger than $\sqrt\la$.

In summary, our approach combines the best features of these methods --- closed-form expectations for smooth $g$, easy sampling from a true density --- and adds two more features: arbitrary-order accuracy, and validity up to the concentration threshold.

\bibliographystyle{plain} 
\bibliography{bibliogr_Lap_expansion, MyLibrary}
\appendix

\section{Structure-preserving tensor operations}\label{sec:tensor ops}
Recall from Section~\ref{sec:notation} the notation of a tensor $A_{k\to j}$: a multilinear mapping from $k$ vectors in $\br^d$ to either a scalar if $j=0$, a vector in $\br^d$ if $j=1$, and a $d\times d$ matrix if $j=2$. Recall also the concept of a base tensor $G_{k\to j}$ and a composite tensor $\ef{k\to j}$.

In the next lemma, we list some operations which preserve the structure of a composite tensor. Recall that all the composite tensors $\ef{k\to j}$ and all the base tensors $G_{k\to j}$ they are composed of are symmetric.

\begin{lemma}\label{lma:op}
We have the following identities:
\begin{enumerate}[label=(\arabic*)]
\item ($\e$-scaling) $\e^p\ef{k\to j}=\ef{k\to j}$ if $p\geq0$.
\item\label{comp} (composition) $\ef{n\to j}\left[\ef{k_1\to1}[x^{\otimes k_1}],\dots, \ef{k_n\to1}[x^{\otimes k_n}]\right]=\ef{K\to j}[x^{\otimes K}]$, where $K=k_1+\dots+k_n$.
\item (matrix multiplication) $\ef{k_1\to 2}[x^{\otimes k_1}]\ef{k_2\to 2}[x^{\otimes k_2}]\cdots\ef{k_\ell\to 2}[x^{\otimes k_\ell}]=\ef{K\to2}[x^{\otimes K}]$, where $K=k_1+\dots+k_\ell$ and $AB$ refers to matrix multiplication of $A$ and $B$.
\item (trace) $\frac1d\tr(\ef{k\to2}[x^{\otimes k}])=\ef{k\to0}[x^{\otimes k}]$.
\end{enumerate}
In each case, the equality should be read as follows: given the composite tensors appearing on each lefthand side, there exists a composite tensors of the form given on the righthand side to make the equality true.
\end{lemma}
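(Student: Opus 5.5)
The plan is to reduce each of the four identities to the corresponding statement for base tensors, using bilinearity/multilinearity to expand the $\e$-series of each composite tensor. Write every composite tensor on the left-hand side as a finite sum $\ef{k\to j}=\sum_{\ell\ge0}\e^\ell G^{(\ell)}_{k\to j}$. Each operation in (2)–(4) is multilinear in its tensor inputs. Expanding therefore gives a finite sum over tuples $(\ell_1,\dots,\ell_n)$, each weighted by $\e^{\ell_1+\dots+\ell_n}$ and applied to base tensors. Grouping by total power of $\e$ yields an expression of the form $\sum_{\ell}\e^\ell G^{(\ell)}$, so it suffices to check that the operation applied to base tensors produces (after symmetrization, see below) a base tensor: symmetric, depending only on $d$ and the allowed derivatives, $\la$-free, and with operator norm bounded by a constant depending on $\argc{f},\argc{g}$. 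Item (1) is immediate: $\e^p\sum_\ell\e^\ell G^{(\ell)}=\sum_\ell \e^{\ell+p}G^{(\ell)}$, and for noninteger $p$ one can absorb $\e^p$ into the base tensor since $\e\le1$, although in the paper only integer $p$ appears.

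\textbf{Symmetry.} The only subtlety with symmetry is that, e.g., $G_{n\to j}[G_{k_1\to1}[x^{\otimes k_1}],\dots]$ defines a multilinear map in $K$ arguments that need not be symmetric. Since we only ever evaluate on the diagonal $x^{\otimes K}$, I will replace it by its symmetrization $\frac1{K!}\sum_\sigma(\cdot)[x_{\sigma(1)},\dots,x_{\sigma(K)}]$. This agrees on $x^{\otimes K}$ and inherits all dependence properties.

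\textbf{Norm bounds.} These are the heart of the proof, and here I must be careful, because the operator norm \eqref{T norm} for symmetric tensors is defined via the diagonal only. I will use the standard fact (Banach's theorem for symmetric multilinear forms) that for a symmetric $A_k$, $\sup_{\|u_i\|=1}|A_k[u_1,\dots,u_k]|=\|A_k\|_{\op}$. For $j=1,2$ I interpret the norm as the sup of the Euclidean or spectral norm of the output over unit inputs, equivalently the scalar norm after pairing with unit vectors. With this, the bounds follow directly:
\begin{itemize}
\item[(2)] Composition: $\|G_{n\to j}[v_1,\dots,v_n]\|\le\|G_{n\to j}\|\prod\|v_i\|$ and $\|G_{k_i\to1}[x^{\otimes k_i}]\|\le \|G_{k_i\to1}\|\|x\|^{k_i}$.
\item[(3)] Matrix products: submultiplicativity of the spectral norm.
\item[(4)] Trace: $|\tr M|\le d\|M\|$, so $\frac1d\tr$ is bounded. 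The factor $\frac1d$ introduces explicit $d$-dependence, which is permitted for base tensors.
\end{itemize}
Polarization might lose a factor depending on $K$, but this is fine since $L$, and hence all $K$ involved, are fixed.

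The main obstacle I expect is precisely this passage between off-diagonal and diagonal norms, that is, ensuring that the symmetrized output's operator norm is controlled by the input norms. I would rely on Banach's equality (or crude polarization, with a $K$-dependent constant $K^K/K!$) applied to the symmetric inputs, and bound the symmetrized output on the diagonal directly, which is all \eqref{T norm} requires. The remaining checks, namely finiteness of the sums, $\la$-independence, and dependence only on the listed derivatives, are bookkeeping.
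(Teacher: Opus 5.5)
Your proposal is correct and follows the paper's proof essentially step for step. You expand by multilinearity into nonnegative powers of $\e$ times base-tensor expressions, symmetrize over permutations to obtain a symmetric tensor that agrees on the diagonal, and bound its norm by the product of the input norms, using submultiplicativity for (3) and $|\tr M|\le d\|M\|$ for (4). Your appeal to Banach's theorem for symmetric forms is a useful addition: it justifies the off-diagonal bound $\|G_{n\to j}[v_1,\dots,v_n]\|\le\|G_{n\to j}\|_{\op}\prod\|v_i\|$, which the paper uses without comment.

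One small correction concerns your aside on noninteger $p$ in (1). Absorbing $\e^p$ into a base tensor would make it depend explicitly on $\la$, which base tensors are not allowed to do, so that argument fails. The point is moot, however, since only integer $p$ is intended and used.
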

\begin{remark}\label{rk:matmul}We will have use for the third identity with $k_1=\dots=k_\ell=0$. The identity then gives that the product of matrices of the form $\sum_\ell\e^\ell G_{0\to2}^{(\ell)}$ is also such a matrix.
\end{remark}

\begin{proof}
The first identity is trivial: clearly the structure is preserved, and boundedness of operator norms is unaffected. To prove the second identity, multilinearity gives that the lefthand side is a sum of nonnegative powers of $\e$ times terms of the form
\be\label{Gn}
G_{n\to j}\Big[G_{k_1\to1}[x^{\otimes k_1}],\dots, G_{k_n\to1}[x^{\otimes k_n}]\Big].
\ee 
Define $T_{K\to j}[x_1,\dots,x_K]$ by
 \bs\label{TKj}
 T_{K\to j}[x_1,\dots, x_K]=\frac{1}{K!}\sum_\sigma G_{n\to j}\Big[&G_{k_1\to1}[x_{\sigma(1)},\dots,x_{\sigma(k_1)}], G_{k_2\to1}[x_{\sigma(k_1+1)},\dots,x_{\sigma(k_1+k_2)}],\\
 &\dots, G_{k_n\to1}[x_{\sigma(K-k_n+1)},\dots,x_{\sigma(K)}]\Big],\es
where the sum is over all permutations $\sigma$ of $\{1,\dots,K\}$. Then $T_{K\to j}$ satisfies
\be
T_{K\to j}[x^{\otimes K}]=G_{n\to j}\left[G_{k_1\to1}[x^{\otimes k_1}],\dots, G_{k_n\to1}[x^{\otimes k_n}]\right].
\ee
 It remains to show $T_{K\to j}$ is a base tensor. It is symmetric by construction, and only depends on $d,\nabla^mf(0),\nabla^m\log g(0)$ since this is true for each of $G_{n\to j},G_{k_1\to1},\dots, G_{k_n\to1}$. Furthermore, we have $\|T_{K\to j}\|_\op \leq \|G_{n\to j}\|_\op\|G_{k_1\to1}\|_\op\dots\|G_{k_n\to1}\|_\op\leq c$.
 
To  prove the third identity, multilinearity gives that the lefthand side is a sum of nonnegative powers of $\e$ times terms of the form
$$G_{k_1\to 2}[x^{\otimes k_1}] G_{k_2\to 2}[x^{\otimes k_2}]\cdots G_{k_\ell\to2}[x^{\otimes k_\ell}]$$ for base tensors $G_{k_1\to2},\dots,G_{k_\ell\to2}$. As in~\eqref{TKj}, define a symmetric tensor $T_{K\to2}$ such that $T_{K\to2}[x^{\otimes K}]=G_{k_1\to 2}[x^{\otimes k_1}] G_{k_2\to 2}[x^{\otimes k_2}]\cdots G_{k_\ell\to2}[x^{\otimes k_\ell}]$. (Recall that symmetry of $T_{K\to2}$ is with respect to the input arguments. The output matrix need not be symmetric.)  It remains to show $T_{K\to2}$ is a base tensor. It is symmetric by construction, and only depends on $d,\nabla^mf(0),\nabla^m\log g(0)$ since this is true for each $G_{k_1\to2},\dots,G_{k_\ell\to2}$. Furthermore, we have \be
\|T_{K\to 2}\|_\op \leq \|G_{k_1\to2}\|_\op\dots\|G_{k_\ell\to2}\|_\op < c. 
\ee

The fourth identity is straightforward.
\end{proof}

\begin{corollary}\label{corr:pq} Let $\mathcal K$ be a finite subset of $\mathbb N\cup\{0\}$. It holds
\be\label{njouter}
\ef{n\to j}\left[\left(\sum_{k\in\mathcal K}\ef{k\to1}[x^{\otimes k}]\right)^{\otimes n}\right]=\sum_{q\in\mathcal Q}\ef{q\to j}[x^{\otimes q}],
\ee and
\be\label{k2power}
\bigg(\sum_{k\in\mathcal K}\ef{k\to2}[x^{\otimes k}]\bigg)^{n} = \sum_{q\in\mathcal Q}\ef{q\to 2}[x^{\otimes q}],
\ee 
for some finite subset $\mathcal Q\subset\mathbb N\cup\{0\}$, 
where $\min\{q:\,q\in\mathcal Q\}=n\cdot \min\{k:\,k\in\mathcal K\}$.
\end{corollary}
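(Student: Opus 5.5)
The plan is to expand both sides by multilinearity and then reassemble the resulting terms using Lemma~\ref{lma:op}.

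For~\eqref{njouter}, write $S(x)=\sum_{k\in\mathcal K}\ef{k\to1}[x^{\otimes k}]$. Since $\ef{n\to j}$ is multilinear in its $n$ arguments, we can expand
\be
\ef{n\to j}\big[S(x)^{\otimes n}\big]=\sum_{(k_1,\dots,k_n)\in\mathcal K^n}\ef{n\to j}\big[\ef{k_1\to1}[x^{\otimes k_1}],\dots,\ef{k_n\to1}[x^{\otimes k_n}]\big].
\ee
Each summand has exactly the form treated by the composition identity, Lemma~\ref{lma:op}\ref{comp}, so it equals $\ef{K\to j}[x^{\otimes K}]$ with $K=k_1+\dots+k_n$.

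Next we group the summands by the value of $K$. We take $\mathcal Q=\{k_1+\dots+k_n:\ k_i\in\mathcal K\}$, which is a finite set. For each $q\in\mathcal Q$, the finitely many composite tensors $\ef{q\to j}$ with that degree add up to a single composite tensor $\ef{q\to j}$. This holds because a finite sum of composite tensors is again composite: we collect like powers of $\e$, and at each power the sum of base tensors is still symmetric, has the allowed dependence, and has operator norm bounded by the sum of the individual bounds, which is a constant since there are finitely many terms. The smallest element of $\mathcal Q$ is obtained by choosing every $k_i=\min\mathcal K$, which gives $\min\mathcal Q=n\min\mathcal K$.

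For~\eqref{k2power}, the argument is the same except that we expand the matrix product $\big(\sum_k\ef{k\to2}[x^{\otimes k}]\big)^n$ into the $|\mathcal K|^n$ ordered products. Each ordered product is handled by the matrix multiplication identity, Lemma~\ref{lma:op}(3), and the grouping by degree is exactly as before.

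Every step here is routine. The only point that needs care is checking that a finite sum of composite tensors is composite. This requires that the constant $c(\argc{f},\argc{g})$ in Definition~\ref{def:base comp} may increase by a factor depending only on $|\mathcal K|$, $n$ and $L$, which is allowed under the paper's conventions.
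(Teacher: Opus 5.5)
Your proposal is correct and follows the paper's own argument: expand by multilinearity, then apply the composition identity (for the tensor case) or the matrix-multiplication identity (for the matrix-power case) of Lemma~\ref{lma:op}. The only difference is that you spell out two steps the paper leaves implicit, namely that a finite sum of composite tensors is again composite (with a constant depending only on $L$) and that $\min\mathcal Q=n\min\mathcal K$.
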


\begin{proof}~\eqref{njouter} follows from multilinearity of the $^{\otimes n}$ operation and the second identity of Lemma~\ref{lma:op}. Similarly,~\eqref{k2power} follows from multilinearity of the matrix power operation, and the third identity of Lemma~\ref{lma:op}.
\end{proof}


\begin{lemma}\label{lma:logdet} Let $A(t)=\sum_{k\geq1}\ef{k\to2}[t^{\otimes k}]$, $m\geq 0$, and $r\leq1$ be small enough. Then there are composite tensors $\ef{k}$ such that for any $N\ge2$, we have
\bs
\log\det(I_d+\e^mA(t))=d\e^m\left(\sum_{k=1}^{N-1}\ef{k\to0}[t^{\otimes k}]+\CO\big(\|t\|^N\big)\right),\qquad\forall \|t\|\leq r.
\es
\end{lemma}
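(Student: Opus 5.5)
We will use the identity $\log\det(I_d+M)=\tr\log(I_d+M)$ together with the power series of $\log(1+z)$. This applies once $\|\e^mA(t)\|_\op\le 1/2$, which we arrange by taking $r$ small. Since every base tensor in $A$ has bounded operator norm, $\|A(t)\|_\op\les \sum_{k\ge1}\|t\|^k\les\|t\|$ for $\|t\|\le r\le1$. The operator norm of a $k\to2$ tensor evaluated at $t^{\otimes k}$ is controlled by its operator norm times $\|t\|^k$, which we take as part of the definition of boundedness. Hence for $r$ small enough (depending only on $\argc{f},\argc{g}$), and using $\e^m\le1$, we get $\|\e^mA(t)\|_\op\le c r\le 1/2$.

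Next we expand
\[
\log\det(I_d+\e^mA(t))=\sum_{n=1}^{N-1}\frac{(-1)^{n+1}}{n}\e^{mn}\tr\big(A(t)^n\big)+\tr\,\mathcal{E}_N(t),
\]
where $\mathcal{E}_N(t)$ is the matrix remainder of the logarithm series. By \eqref{k2power} of Corollary~\ref{corr:pq}, applied with $\mathcal K\subset\{1,2,\dots\}$, we have $A(t)^n=\sum_{q\ge n}\ef{q\to2}[t^{\otimes q}]$. The trace identity (4) of Lemma~\ref{lma:op} then gives $\tr(A(t)^n)=d\sum_{q\ge n}\ef{q}[t^{\otimes q}]$. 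The $\e$-scaling identity (1) lets us write $\e^{mn}=\e^m\cdot\e^{m(n-1)}$ and absorb $\e^{m(n-1)}$ into the composite tensors. Summing over $n\le N-1$ and collecting by degree, we obtain $d\e^m\sum_{q\ge1}\ef{q}[t^{\otimes q}]$ as a finite sum. The terms with $q\ge N$ are $\CO(\|t\|^N)$ for $\|t\|\le r\le1$, since the sum is finite with bounded tensor norms.

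For the remainder, $\|\mathcal{E}_N(t)\|_\op\le\sum_{n\ge N}\frac1n\|\e^mA(t)\|_\op^n\les \e^{mN}\|t\|^N$, because $\|\e^m A(t)\|_\op\le 1/2$ makes the geometric tail bounded by twice its first term. Since $|\tr M|\le d\|M\|_\op$, we get $|\tr\mathcal{E}_N(t)|\les d\e^{mN}\|t\|^N\le d\e^m\|t\|^N$. Combining the two pieces yields the claim.

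The main subtlety is ensuring that the composite tensors $\ef{k}$ can be chosen independently of $N$, as the statement says "there are composite tensors such that for any $N$". We handle this by defining, for each degree $k$, $\ef{k}$ as the coefficient obtained from the full sum over $n\le k$. Only $n\le k$ contributes to degree $k$, since $A(t)^n$ starts at degree $n$. These coefficients are therefore determined independently of $N$, and raising $N$ only moves terms between the polynomial part and the remainder. This step, together with the uniform bound on $\|A(t)\|_\op$, is the main thing to get right; the rest is routine.
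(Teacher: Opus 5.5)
Your proposal is correct and follows the paper's proof essentially step for step. Like the paper, you use $\log\det=\tr\log$, make $\|\e^mA(t)\|_\op\le 1/2$ by taking $r$ small, truncate the logarithm series with a geometric tail bound $\les\e^{mN}\|t\|^N$, and then apply the $\e$-scaling, matrix-power (Corollary~\ref{corr:pq}) and trace identities to write the polynomial part as $d\e^m\sum_k\ef{k}[t^{\otimes k}]$. The differences are cosmetic: you take traces term by term rather than bounding the matrix error in operator norm and then using $|\tr M|\le d\|M\|_\op$, and you also observe that the degree-$k$ coefficients involve only $n\le k$ and so do not depend on $N$, a point the paper leaves implicit.
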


\begin{proof}
As is well known, $\log\detm(I_d+\e^mA(t))=\tr\log(I_d+\e^mA(t))$. Due to the form of $A(t)$ and the assumption $r\leq1$, we have $\|\e^mA(t)\|_\op \leq\|A(t)\|_\op \les \|t\|$ for all $\|t\|\leq r$. 
We assume $r$ is sufficiently small that $\|\e^mA(t)\|_\op \leq 1/2$ for all $\|t\|\leq r$. We then have
\bs\label{basic logexp}
\Bigl\|\log(I_d+\e^mA(t))&-\e^m\bigg\{\sum_{k=1}^{N-1}\frac{(-1)^{k+1}}{k}\,\e^{m(k-1)}A(t)^k\bigg\}\Bigr\|_{\op}\\
&\le \sum_{k=N}^{\infty}\frac{\|\e^mA(t)\|^k}{k}\les\|\e^mA(t)\|^N\les \e^{mN}\|t\|^N,\quad\forall \|t\|\leq r.
\es 
By the first identity in Lemma~\ref{lma:op}, and~\eqref{k2power} in Corollary~\ref{corr:pq}, the sum in curly braces can be expressed as $\sum_{\ell\geq1}\ef{\ell\to2}[t^{\otimes\ell}]$. Furthermore, we have $\|\sum_{\ell\geq N}\ef{\ell\to2}[t^{\otimes\ell}] \|\les\|t\|^N$ for all $\|t\|\leq r$. Therefore,
\be
\Bigl\|\log(I_d+\e^mA(t))-\e^m\sum_{\ell=1}^{N-1}\ef{\ell\to2}[t^{\otimes\ell}]\Bigr\|_\op \les\e^m\|t\|^N\quad\forall \|t\|\leq r.
\ee
Using that $|\tr A-\tr B|\leq d\|A-B\|$, and using the fourth identity in Lemma~\ref{lma:op} concludes the proof.
\end{proof}

\begin{lemma}\label{lma:circ}Let $\ef{k_1\to1},\ef{k_2\to1},\dots, \ef{k_\ell\to1}$ be composite tensors, with $k_i\geq2$ for all $i$, and let $t_i(s)=s+\e^m\ef{k_i\to1}[s^{\otimes k_i}]$, $i=1,\dots, \ell$, where $m\geq0$. Then there exist $\ef{p\to1}$, $p\geq2$, such that 
\be\label{tell}(t_1\circ t_2\circ\dots\circ t_\ell)(s)=s+\e^m\sum_{p\geq2}\ef{p\to1}[s^{\otimes p}].\ee 
\end{lemma}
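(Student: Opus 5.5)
The proof is an induction on $\ell$. The case $\ell=1$ is trivial: it suffices to take $\ef{k_1\to1}$ as the unique term with $p=k_1\ge2$. For the inductive step, write $u(s):=(t_2\circ\dots\circ t_\ell)(s)$. By the induction hypothesis,
\be
u(s)=s+\e^m\sum_{p\geq2}\ef{p\to1}[s^{\otimes p}].
\ee
Substituting into $t_1$ gives
\be
(t_1\circ u)(s)=u(s)+\e^m\ef{k_1\to1}\big[u(s)^{\otimes k_1}\big]=s+\e^m\sum_{p\geq2}\ef{p\to1}[s^{\otimes p}]+\e^m\ef{k_1\to1}\big[u(s)^{\otimes k_1}\big],
\ee
so it remains to put the last term in the required form.

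Write $u(s)=\sum_{k\in\mathcal K}\ef{k\to1}[s^{\otimes k}]$ with $\mathcal K=\{1\}\cup\{p\geq2\}$. The $k=1$ term is the identity tensor $s\mapsto s$, which is a base tensor: it is symmetric in its single argument and has operator norm $1$. The coefficients $\e^m$ in front of the higher terms are absorbed into the composite tensors by the $\e$-scaling identity of Lemma~\ref{lma:op}(1), using $m\geq0$. Applying \eqref{njouter} of Corollary~\ref{corr:pq} with $n=k_1$ and $j=1$ then gives
\be
\ef{k_1\to1}\big[u(s)^{\otimes k_1}\big]=\sum_{q\in\mathcal Q}\ef{q\to1}[s^{\otimes q}],
\ee
where $\min\mathcal Q=k_1\cdot\min\mathcal K=k_1\geq2$. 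Every degree appearing is therefore at least $2$. Multiplying by $\e^m$ and combining with the existing sum, collecting same-degree terms (a sum of composite tensors of the same degree is again composite), yields \eqref{tell} for $\ell$ maps.

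The main point needing care is bookkeeping rather than analysis. First, one must check that the identity map qualifies as a base tensor under Definition~\ref{def:base comp}; it depends only on $d$, so it does. Second, the overall prefactor $\e^m$ must be factored out exactly, which works because each $t_i$ contributes its nonlinear part with a factor $\e^m$ and the composition with the leading identity part introduces no $\e^{-1}$ factors. Finally, since all sums are finite, no convergence issues arise.
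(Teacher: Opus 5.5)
Your proof is correct and is essentially the paper's argument: induction on $\ell$, with the new nonlinear term put in composite form via \eqref{njouter} of Corollary~\ref{corr:pq} (with the $\e^m$ factors absorbed by $\e$-scaling and the identity treated as a degree-one composite tensor), and the degree bound coming from $\min\mathcal Q=n\cdot\min\mathcal K$. The only difference is cosmetic: the paper peels off the innermost map $t_\ell$ and applies the hypothesis to $t_1\circ\dots\circ t_{\ell-1}$ evaluated at $t_\ell(s)$, whereas you peel off the outermost map $t_1$, and both orderings work equally well.
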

\begin{proof}We use induction. The result trivially holds for $\ell=1$. Suppose~\eqref{tell} holds for some $\ell-1\geq1$. Then
\bs
(t_1\circ t_2\circ\dots\circ t_{\ell-1})(t_{\ell}(s))&=t_{\ell}(s)+\e^m\sum_{p\geq2}\ef{p\to1}[t_{\ell}(s)^{\otimes p}]\\
&=s+\e^m\bigg(\ef{k_\ell\to1}[s^{\otimes k_\ell}]+\sum_{p\geq2}\ef{p\to1}\left[\left(s+\e^m\ef{k_\ell\to1}[s^{\otimes k_\ell}]\right)^{\otimes p}\right]\bigg)\\
&=s+\e^m\sum_{q\geq2}\ef{q\to1}[s^{\otimes q}].
\es The last line (including that $q\geq2$) is by the assumption $k_\ell\ge 2$ and by~\eqref{njouter} of Corollary~\ref{corr:pq}.
\end{proof}

\section{Change of variable proofs}\label{app:change}
\begin{proof}[Proof of Lemma~\ref{lma:U}] Recall that $\varphi(0)=0$. The assumption
\be\label{phid}
\|\varphi'(t)\|_\op \leq 1/2,\quad\forall\|t\| \leq 2C_2r,
\ee 
implies that $\|\varphi(t)\|\leq \|t\|/2$, $\|t\|\leq 2C_2r$. This gives 
\be\label{twosided incl}
\|(\mathrm{id}+\varphi)(t)\| \leq \tfrac32\|t\| \leq C_1r,\quad\forall \|t\|\leq \tfrac23C_1r.
\ee
We conclude $\{\|t\|\leq \tfrac23 C_1 r\}\subset X^{-1}(\us)$. 

Since $\us\subset \{\|x\|\leq C_2r\}$, we will finish the proof of (1) and prove (2) by showing that for any $\|x\|\leq C_2r$ there exists a unique $\|t\|\leq 2C_2r$ such that $X(t)=x$. Fix any such $x$ and define $\varphi_x(t)=x-\varphi(t)$. We have
\be
\varphi_x(\{\|t\|\leq 2C_2r\})\subset \{\|u\|\leq 2C_2r\},
\ee
because $\|t\|\leq 2C_2r$ and \eqref{phid} imply
\bs
\|\varphi_x(t)\|&\leq\|x\|+\|\varphi(t)-\varphi(0)\|\leq C_2r +\tfrac12\|t\| \leq 2C_2r.
\es 
Furthermore, again using~\eqref{phid}, it is straightforward to show that $\varphi_x$ is a strict contraction on $\{\|t\|\leq 2C_2r\}$. Therefore, the contraction mapping theorem shows there is a unique $\|t\|\leq 2C_2r$ satisfying $\varphi_x(t)=t$. But then $x=(\mathrm{id}+\varphi)(t)$, proving the claims. 
\end{proof}

For the proof of Lemma~\ref{lma:changevar}, we state and prove a main auxiliary lemma. Recall that $\ef{k}$ is shorthand for $\ef{k\to0}$.
\begin{lemma}\label{lma:M}
Let $3\leq M\leq 2L+1$ and $f$ be a function on $\br^d$ given by
\be\label{fxM}
f(x)=\frac12\|x\|^2 +\sum_{k\geq M}\ef{k}[x^{\otimes k}]
\ee Then there exists a composite tensor $\ef{(M-1)\to1}$ such that
\be\label{fxMx}
f\left(x- \ef{(M-1)\to1}[x^{\otimes M-1}]\right)=\frac12\|x\|^2 +\sum_{k\geq M+1}\ef{k}[x^{\otimes k}].
\ee 
\end{lemma}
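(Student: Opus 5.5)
The natural choice is to let $\ef{(M-1)\to1}$ be the vector-valued tensor obtained by taking the gradient of the lowest-order term in \eqref{fxM}. Write $A:=\ef{M}$ for the composite tensor multiplying $x^{\otimes M}$ in \eqref{fxM}; by Definition~\ref{def:base comp} it is symmetric. Define $\ef{(M-1)\to1}[x^{\otimes M-1}]$ as the vector $v(x)$ characterized by $v(x)^\top u=M\,A[x^{\otimes M-1},u]$ for all $u\in\br^d$, i.e.\ $v(x)=\nabla_x\big(A[x^{\otimes M}]\big)$. This is a symmetric multilinear map in its $M-1$ inputs. Writing $A=\sum_\ell\e^\ell G^{(\ell)}_M$, its components are $\e^\ell$ times $M$ times the corresponding contractions of base tensors. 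These contractions have operator norm at most $M\|G^{(\ell)}_M\|_\op$, because for a symmetric tensor the operator norm in \eqref{T norm} dominates the multilinear supremum up to a constant depending only on $M$ (by polarization). So $v$ is indeed a composite tensor $\ef{(M-1)\to1}$.

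Next substitute $y=x-v(x)$ into $f$. The quadratic part gives
\[
\tfrac12\|x-v(x)\|^2=\tfrac12\|x\|^2-x^\top v(x)+\tfrac12\|v(x)\|^2=\tfrac12\|x\|^2-M A[x^{\otimes M}]+\tfrac12\|v(x)\|^2 .
\]
The term $\tfrac12\|v(x)\|^2$ has degree $2M-2\geq M+1$, since $M\geq3$. It is a composite scalar tensor: view it as the bilinear form $\tfrac12 I_d$ (a base tensor $G_{2\to0}$ depending only on $d$) evaluated at two copies of $\ef{(M-1)\to1}[x^{\otimes M-1}]$, and apply Lemma~\ref{lma:op}\ref{comp}.

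For the higher terms, expand $\ef{k}[(x-v(x))^{\otimes k}]$ for $k\ge M$ using \eqref{njouter} of Corollary~\ref{corr:pq}, with $\mathcal K=\{1,M-1\}$ and the identity map as $\ef{1\to1}$. This yields $\sum_q\ef{q}[x^{\otimes q}]$ with minimal degree $k$. For $k\ge M+1$ every term already has degree $\ge M+1$. For $k=M$, multilinearity and symmetry give
\[
A[(x-v)^{\otimes M}]=A[x^{\otimes M}]+\sum_{j\ge1}\binom Mj(-1)^jA[x^{\otimes M-j},v^{\otimes j}],
\]
and each $j\ge1$ term has degree $M+j(M-2)\ge M+1$.

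Collecting the order-$M$ terms gives $-MA[x^{\otimes M}]+A[x^{\otimes M}]=(1-M)A[x^{\otimes M}]$, which does not vanish. So the correct scaling is $\ef{(M-1)\to1}:=\tfrac{1}{M}v=\tfrac1M\nabla(A[x^{\otimes M}])$, i.e.\ $v^\top u=A[x^{\otimes M-1},u]$; this matches the $L=1$ example, where the correction is $\tfrac16\nabla^3 f(0)[t^{\otimes2}]$. Redoing the bookkeeping with this choice, the cross term becomes $-x^\top v=-A[x^{\otimes M}]$, which cancels exactly against $+A[x^{\otimes M}]$. All remaining terms are composite tensors of degree $\ge M+1$, and Lemma~\ref{lma:op}(1) absorbs any powers of $\e$. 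Summing gives \eqref{fxMx}.

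The main obstacle is only the structural bookkeeping: verifying that the symmetrizations produced by Lemma~\ref{lma:op} keep base tensors bounded in the operator norm \eqref{T norm}, and that the sum over $k$ stays finite. Both follow from the finiteness convention and Lemma~\ref{lma:op}.
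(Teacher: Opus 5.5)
Your final construction, with $\ef{(M-1)\to1}[x^{\otimes M-1}]^\top u=\ef{M}[x^{\otimes M-1},u]$ after you dropped the gradient normalization with the extra factor $M$, is exactly the paper's. The rest also matches the paper's proof: $-x^\top v$ cancels the leading term of $\ef{M}[(x-v)^{\otimes M}]$, the degree count $M+j(M-2)\geq 2M-2\geq M+1$ holds, and Corollary~\ref{corr:pq} handles the remaining terms, so the argument is correct. In a final write-up, start directly from the correct normalization rather than presenting the abandoned factor-$M$ version.
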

\begin{proof}
Let $\ef{M}$ be the specific composite tensor appearing in~\eqref{fxM}. For each $x$, let $A_{(M-1)\to1}[x^{\otimes M-1}]$ be the vector such that $A_{(M-1)\to1}[x^{\otimes M-1}]^\top u=\ef{M}[x^{\otimes M-1}, u]$ for all $u$. The order of the $M$ arguments input to $\ef{M}[\cdot]$ is irrelevant, since $\ef{M}$ is symmetric by definition. It is straightforward to see that $A_{(M-1)\to1}$ is composite, so from now on we call it $\ef{(M-1)\to1}$. Let $p(x)=- \ef{(M-1)\to1}[x^{\otimes M-1}]$. Then $x^\top p(x) = -\ef{M}[x^{\otimes M}]$, by definition of $\ef{(M-1)\to1}$. Thus
$$\tfrac12\|x+p(x)\|^2 =\tfrac12\|x\|^2- \ef{M}[x^{\otimes M}] + (\tfrac12I_d)[p(x)^{\otimes 2}].$$ Here, we are identifying the matrix $I_d$ with the bilinear form $I_d[u\otimes u]=\sum_{i,j=1}^d(I_d)_{ij}u_iu_j = \|u\|^2$. We then have
\bs\label{fxp}
f(x+p(x))=&\frac12\|x\|^2 +\bigg\{\ef{M}[(x+p(x))^{\otimes M}-x^{\otimes M}]\\
&+ (\tfrac12I_d)[p(x)^{\otimes 2}] + \sum_{k\geq M+1}\ef{k}[(x+p(x))^{\otimes k}]\bigg\}.
\es 
We expand the outer products in the terms inside the curly braces. By Corollary~\ref{corr:pq}, the result of doing these outer product expansions is a sum of the form $\sum_q\ef{q}[x^{\otimes q}]$. It suffices to show $q\geq M+1$ for all $q$ in the sum. 

For $(\tfrac12I_d)[p(x)^{\otimes 2}]$, we have $q=2M-2\geq M+1$, since $M\geq3$. For $\ef{k}[(x+p(x))^{\otimes k}]$, $k\geq M+1$, all resulting $\ef{q}[x^{\otimes q}]$ have $q\geq M+1$. Finally, expanding the outer product, $\ef{M}[(x+p(x))^{\otimes M}-x^{\otimes M}]$ is a sum of terms of the form $\ef{M}[x^{\otimes M-m}\otimes p(x)^{\otimes m}]=\ef{q}[x^{\otimes q}]$ (by~\eqref{njouter} with $j=0$) for $q=(M-m)+(M-1)m = M+(M-2)m$, with $m=1,\dots, M$. Since $m\geq1$ we have $q\geq2M-2\geq M+1$.
\end{proof}
\begin{proof}[Proof of Lemma~\ref{lma:changevar}]
Note that $f_{2L+1}$ satisfies the conditions of Lemma~\ref{lma:M} with $M=3$. In fact, the tensors in the expansion of $f_{2L+1}$ are base tensors, which are a special case of composite tensors. We iteratively apply the lemma, with $M=3,4,\dots, 2L+1$, to get that
$(f_{2L+1}\circ x_3\circ x_4\circ\dots\circ x_{2L+1})(t)=\frac12\|t\|^2 +\sum_{k\geq 2L+2}\ef{k}[t^{\otimes k}]$.  But now, assuming $2R\sqrt\e\le1$, we have that $\sum_{k\geq 2L+2}\ef{k}[t^{\otimes k}]=\CO(\|t\|^{2L+2})$ for all $\|t\|\leq2R\sqrt\e$. Thus $f(h(t))=\|t\|^2/2+\CO(\|t\|^{2L+2})$ for all $\|t\|\leq 2R\sqrt\e$, where 
$h=x_3\circ x_4\circ\dots\circ x_{2L+1}$. We now modify $h$. Each $x_k$ is of the form $x_k(t)=t+\ef{k\to1}[t^{\otimes k}]$, and $k\geq2$. Therefore, Lemma~\ref{lma:circ} with $m=0$ gives
\be\label{h-def}
h(t)=t+\sum_{k\geq2}\ef{k\to1}[s^{\otimes k}].\ee Next, define $X(t)=t+\sum_{k=2}^{2L}\ef{k\to1}[s^{\otimes k}]$ for the same $\ef{k\to1}$ as in~\eqref{h-def}. Thus $h(t)=X(t)+\CO(\|t\|^{2L+1})$ for all $\|t\|\leq1$. But then, using that $\|X(t)\|=\CO(\|t\|)$, $\|t\|\leq1$, we have
\bs
|f_{2L+1}(h(t))-f_{2L+1}(X(t))|&\leq\sum_{k=2}^{2L+1}\frac1{k!}\left|\nabla^kf(0)[h(t)^{\otimes k}-X(t)^{\otimes k}]\right|\\
&\les\sum_{k=2}^{2L+1}\|t\|^{2L+1}\|t\|^{k-1} = \CO(\|t\|^{2L+2}).
\es Therefore, $f_{2L+1}(X(t))=\|t\|^2/2+\CO(\|t\|^{2L+2})$ as well, and $X(t)$ is the desired polynomial change of variables of order $2L$.

\end{proof}

For the proof of Lemma~\ref{lma:changevar-m}, we state and prove a main auxiliary lemma.
\begin{lemma}\label{lma:M-m}
Let $m\geq1$, $M\geq 3$ and
\bs\label{eq:M-m}
f(x)=&\e \ef{1}[x]+\e \ef{2}[x^{\otimes 2}] + \frac12\|x\|^2 \\
&+\e^{m+1}\sum_{k=3}^{M-1}\ef{k}[x^{\otimes k}]+ \e^{m}\sum_{k\geq M}\ef{k}[x^{\otimes k}].
\es Then there exists a composite tensor $\ef{(M-1)\to1}$ such that
\bs\label{tildFm}
f\left(x- \e^m\ef{(M-1)\to1}[x^{\otimes M-1}]\right)=&\e \ef{1}[x]+\e \ef{2}[x^{\otimes 2}] + \frac12\|x\|^2 \\
&+\e^{m+1}\sum_{k=3}^{M}\ef{k}[x^{\otimes k}]+ \e^{m}\sum_{k\geq M+1}\ef{k}[x^{\otimes k}].
\es  
\end{lemma}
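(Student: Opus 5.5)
This is the $\e$-weighted analogue of Lemma~\ref{lma:M}, and I would follow that proof. Let $\ef{M}$ be the specific composite tensor multiplying $\e^m$ at degree $M$ in \eqref{eq:M-m}. Define $\ef{(M-1)\to1}$ by $\ef{(M-1)\to1}[x^{\otimes M-1}]^\top u=\ef{M}[x^{\otimes M-1},u]$. This is composite, with each base component identified with its vector-valued version. Set $p(x)=-\e^m\ef{(M-1)\to1}[x^{\otimes M-1}]$, so that $x^\top p(x)=-\e^m\ef{M}[x^{\otimes M}]$. Then
\[
\tfrac12\|x+p(x)\|^2=\tfrac12\|x\|^2-\e^m\ef{M}[x^{\otimes M}]+\e^{2m}(\tfrac12 I_d)\big[\ef{(M-1)\to1}[x^{\otimes M-1}]^{\otimes2}\big].
\]
The second term exactly cancels the degree-$M$ term $\e^m\ef{M}[x^{\otimes M}]$ of $f(x)$.

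Next, substitute $x+p(x)$ into every other term of \eqref{eq:M-m}. Expand the outer products multilinearly and collect terms using Corollary~\ref{corr:pq} (specifically \eqref{njouter}) together with $\e$-scaling from Lemma~\ref{lma:op}. For each resulting term, I track the power of $\e$ and the degree $q$ in $x$, and check that it lands in one of the allowed slots of \eqref{tildFm}. The target is a prefactor $\e^{m+1}$ when $3\le q\le M$, a prefactor $\e^m$ when $q\ge M+1$, and, for the terms that keep degrees $1,2$, a prefactor $\e$.

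\begin{itemize}
\item \emph{The $\e\ef{1}[x+p]$ and $\e\ef{2}[(x+p)^{\otimes2}]$ terms.} The zeroth-order parts reproduce $\e\ef{1}[x]$ and $\e\ef{2}[x^{\otimes2}]$. Every cross term contains at least one factor of $p$, so it carries $\e^{1+m}$ and has degree $q\ge M-1\ge2$. Two cases arise. If $q\ge3$, the prefactor $\e^{m+1}$ fits either slot, since it reduces to $\e^m$ by $\e$-scaling. The case $q=2$ occurs only when $M=3$, from $\e\ef{1}[p]$. Then the term is $\e^{m+1}\ef{2}[x^{\otimes2}]$, which is absorbed into $\e\ef{2}[x^{\otimes 2}]$ because $m\ge1$.
\item \emph{The quadratic remainder} $\e^{2m}(\tfrac12 I_d)[\cdot]$ has degree $2M-2\ge M+1$ and prefactor $\e^{2m}$, which is fine.
\item \emph{The terms $\e^{m+1}\ef{k}[(x+p)^{\otimes k}]$, $3\le k\le M-1$.} The leading parts stay at $\e^{m+1}$ with degree $k\le M$. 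The corrections have degree at least $k+M-2\ge M+1$, which is fine.
\item \emph{The terms $\e^m\ef{M}[(x+p)^{\otimes M}-x^{\otimes M}]$ and $\e^m\ef{k}[(x+p)^{\otimes k}]$, $k\ge M+1$.} Every term has degree at least $M+1$, as in Lemma~\ref{lma:M}, and prefactor $\e^m$.
\end{itemize}

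The main point needing care is the first bullet: the terms coming from the linear and quadratic parts, and in particular the degree-$2$ case $M=3$, must be absorbed using $m\ge1$ rather than left at a lower power. Once the slots are checked, it remains to regroup each degree into a single composite tensor. This works because sums of composite tensors of the same degree are composite, and symmetrization, as in \eqref{TKj}, preserves base-ness.
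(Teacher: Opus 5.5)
Your proposal is correct and is essentially the paper's proof. It uses the same choice $p(x)=-\e^m\ef{(M-1)\to1}[x^{\otimes M-1}]$, obtained by turning one slot of $\ef{M}$ into a vector output, and the same term-by-term check that every expanded term has $\e$-power at least $1$ in degrees $1,2$, at least $m+1$ in degrees $3,\dots,M$, and at least $m$ in degrees $\geq M+1$; your explicit treatment of the $M=3$ term $\e\ef{1}[p]$ is exactly the paper's ``$\ell\geq1$ for $n=1,2$'' case. Your form $\e^{2m}(\tfrac12 I_d)[\ef{(M-1)\to1}[x^{\otimes M-1}]^{\otimes2}]$ of the quadratic remainder is also the right one; the paper's displayed $\tfrac12\e^{2m}I_d[p(x)^{\otimes2}]$ double-counts the factor $\e^m$ already inside $p$, which is a harmless typo.
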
 

\begin{proof}
Let $\ef{M}$ be the specific composite tensor arising in~\eqref{eq:M-m}. As in the proof of Lemma~\ref{lma:M}, we construct the composite tensor $\ef{(M-1)\to1}$ such that $\ef{(M-1)\to1}[x^{\otimes M-1}]^\top u=\ef{M}[x^{\otimes M-1}, u]$ for all $x,u\in\br^d$. Let $p(x)=- \e^m\ef{(M-1)\to1}[x^{\otimes M-1}]$. Note that $$\tfrac12\|x+p(x)\|^2 =\tfrac12\|x\|^2-\e^m \ef{M}[x^{\otimes M}] + \tfrac12\e^{2m}I_d[p(x)^{\otimes2}].$$ Then $f(x+p(x))=\frac12\|x\|^2+A(x)$, where
\bs\label{fxp-m}
A(x):=&\e \sum_{k=1}^2\ef{k}[(x+p(x))^{\otimes k}]+  \tfrac12\e^{2m}I_d[p(x)^{\otimes2}]+\e^m\ef{M}[(x+p(x))^{\otimes M}-x^{\otimes M}] \\
&+\e^{m+1}\sum_{k=3}^{M-1}\ef{k}[(x+p(x))^{\otimes k}] + \e^{m}\sum_{k\geq M+1}\ef{k}[(x+p(x))^{\otimes k}].
\es 
We now study $A(x)$. Let us expand the outer products, but not yet collect terms by like powers of $x$. This gives $A(x)=\sum_{n,\ell}\e^{\ell}\ef{n}[x^{\otimes n}]$, by Lemma~\ref{lma:op} and Corollary~\ref{corr:pq}. To prove~\eqref{tildFm}, it now suffices to show
\begin{itemize}
\item $\ell\geq1$ for $n=1,2$, 
\item $\ell\geq m+1$ for all $n=3,\dots, M$.
\item $\ell\geq m$ for all $n\geq M+1$,
\end{itemize}
We go through each term that arises in the expansion of $A(x)$. Whenever $\ell\geq m+1$, all three of the above cases are automatically satisfied, so we don't need to check what $n$ is.

\begin{itemize}
\item We have $\e \ef{1}[x+p(x)]=\e\ef{1}[x]-\e^{m+1}\ef{1}[\ef{(M-1)\to1}[x^{\otimes M-1}]]$. In the first term, we have $n=1$ and $\ell=1$. In the second term, we have $\ell=m+1$. 
\item The first term in the expansion of $\e \ef{2}[(x+p(x))^{\otimes 2}]$ has $n=2$, $\ell=1$. The second and third terms have $\ell\geq m+1$. 
\item For $\tfrac12\e^{2m}I_d[p(x)^{\otimes2}]$, we have $\ell=2m\geq m+1$ because $m\ge1$. 
\item The terms arising when $\e^m\ef{M}[(x+p(x))^{\otimes M}-x^{\otimes M}]$ is expanded each have $\ell\geq m$. For $n$, we have $n=(M-q)+(M-1)q$, where $q=1,\dots, M$. Thus $n=M+(M-2)q\geq 2M-2\geq M+1$.  
\item The terms arising from the sum $\e^{m+1}\sum_{k=3}^{M-1}\ef{k}[(x+p(x))^{\otimes k}]$ have $\ell\geq m+1$. 
\item The terms arising from $\e^{m}\sum_{k=M+1}^{N-2m+1}\ef{k}[(x+p(x))^{\otimes k}]$ have $\ell\geq m$, and $n\geq M+1$. 
\end{itemize}
\end{proof}

\begin{proof}[Proof of Lemma~\ref{lma:changevar-m}]
Note that $E_m$ satisfies the conditions of Lemma~\ref{lma:M-m} with $M=3$ (with the first sum on the second line in \eqref{eq:M-m} omitted). We iteratively apply the lemma, with $M=3,4,\dots, 2L-2m+1$, to get that
\bs
(E_m\circ h)(s)=&\e\ef{1}[s] + \e\ef{2}[s^{\otimes 2}]+\tfrac12\norm{s}^2+\e^{m+1}\sum_{k=3}^{2L-2m+1}\ef{k}[s^{\otimes k}]\\
&+\e^m\sum_{k\geq2L-2m+2}\ef{k}[s^{\otimes k}].
\es Here, $h(s)=t_3\circ t_4\circ\dots\circ t_{2L-2m+1}$, and each $t_k$ is of the form $t_k(s)=s+\e^m\ef{q_k\to1}[t^{\otimes q_k}]$, with $q_k\geq2$.

Next, if $\|s\|\leq CR\sqrt\e\leq1$ and $k\geq 2L-2m+2$, then $\e^m|\ef{k}[s^{\otimes k}]|\les \e^m(R\sqrt\e)^{2L-2m+2} =\CO(R^{2L+2}\e^{L+1})$. Furthermore,
\be
\e^{m+1}\left|\sum_{k=2L-2m}^{2L-2m+1}\ef{k}[s^{\otimes k}]\right|=\CO(R^{2L+2}\e^{L+1})
\ee as well. Thus we obtain
\be\label{Emt-h}
 E_m(h(s))=\e\ef{1}[s] + \e\ef{2}[s^{\otimes 2}]+\tfrac12\norm{s}^2+\e^{m+1}\sum_{k=3}^{2L-2m-1}\ef{k}[s^{\otimes k}]+\CO\big((R\sqrt\e)^{2L+2}\big)
 \ee  for all $\|s\|\leq CR\sqrt\e$. Next, Lemma~\ref{lma:circ} gives that $h(s)=s+\e^m\sum_{k\geq2}\ef{k\to1}[s^{\otimes k}]$. Define $T_m(s)=s+\e^m\sum_{k=2}^{2L-2m}\ef{k\to1}[s^{\otimes k}]$ for the same tensors $\ef{k\to1}$ as in $h$. Thus $h(s)-T_m(s)=\e^m\CO(\|s\|^{2L-2m+1})$ for all $\|s\|\leq CR\sqrt\e$. This and the fact that $\|T_m(s)\|=\CO(\|s\|)$ imply $$|\ef{k}[h(s)^{\otimes k}-T_m(s)^{\otimes k}]|\les \sum_{j=1}^{k}\e^{mj}\CO(\|s\|^{(2L-2m+1)j})\CO(\|s\|^{k-j})=\e^m\CO(\|s\|^{2L-2m+k})$$ for any $k\geq1$. Recalling from~\eqref{Em} the definition of $E_m$, it follows that
\bs\label{Emht}
 |E_m(h(s))-E_m(T_m(s))|&\les\e^{1+m}\|s\|^{2L-2m+1}+\e^m\|s\|^{2L-2m+2}+\e^{2m}\|s\|^{2L-2m+3}\\
 &\les R^{2L-2m+1}\e^{L+1}.
\es Combining~\eqref{Emt-h} and~\eqref{Emht} concludes the proof.

\end{proof}

\end{document}